\pgfmathsetmacro{\myxlow}{-2}
\pgfmathsetmacro{\myxhigh}{2}
\pgfmathsetmacro{\myiterations}{3}
\newtheorem{thm}{Theorem}[section]
\newtheorem{prop}[thm]{Proposition}
\newtheorem{lem}[thm]{Lemma}
\newtheorem{cor}[thm]{Corollary}
\newtheorem{conj}[thm]{Conjecture} 
\theoremstyle{definition}
\newtheorem{definition}[thm]{Definition}
\newtheorem{example}[thm]{Example}
\theoremstyle{remark}
\numberwithin{equation}{section}
\newcommand{\G}{\mathbb{G}_\mathrm{m}}  
\newcommand{\alg}{\overline{\mathbb{Q}}} 
\newcommand{\R}{\mathbb{R}}  
\newcommand{\Q}{\mathbb{Q}} 
\newcommand{\Z}{\mathbb{Z}} 
\newcommand{\C}{\mathbb{C}} 
\newcommand{\h}{\mathbb{H}} 
\newcommand{\GL}{\mathrm{GL}_2^+(\Q)} 
\newcommand{\MZ}{\mathrm{M}_2(\Z)} 
\newcommand{\SL}{\mathrm{SL}_2(\Z)} 
\newcommand{\RAE}{\R_{\mathrm{an},\exp}} 
\DeclareMathOperator{\re}{Re} 
\DeclareMathOperator{\im}{Im} 
\begin{document}
	
	\title[Products of differences of singular moduli]{Multiplicative relations among differences of singular moduli}
	
	\author{Vahagn Aslanyan}
	\address{School of Mathematics, University of Leeds, Leeds, LS2 9JT, UK}
	\curraddr{Department of Mathematics, University of Manchester, Manchester, M13 9PL, UK}
	\email{\href{Vahagn.Aslanyan@manchester.ac.uk}{Vahagn.Aslanyan@manchester.ac.uk}}
	\author{Sebastian Eterovi\'{c}}
	\address{School of Mathematics, University of Leeds, Leeds, LS2 9JT, UK}	
	\curraddr{Kurt G\"odel Research Center, Universit\"at Wien, 1090 Wien, Austria}
	\email{\href{sebastian.eterovic@univie.ac.at}{sebastian.eterovic@univie.ac.at}}
	
	\author{Guy Fowler}
	\address{Institut f\"{u}r Algebra, Zahlentheorie und Diskrete Mathematik,\newline  Leibniz Universität Hannover, 30167 Hannover, Germany}
	\curraddr{Department of Mathematics, University of Manchester, Manchester, M13 9PL, UK}
	\email{\href{guy.fowler@manchester.ac.uk}{guy.fowler@manchester.ac.uk}}
	\date{\today}

	\thanks{VA was supported by Leverhulme Trust Early Career Fellowship ECF-2022-082 at the University of Leeds (where this work was done), and by EPSRC Open Fellowship EP/X009823/1 at the University of Manchester. SE was supported by EPSRC fellowship EP/T018461/1. GF has received funding from the European Research Council (ERC) under the European Union’s Horizon 2020 research and innovation programme (grant agreement no. 945714).}
	\subjclass[2020]{11G18, 11G15, 03C64}

	\begin{abstract}
		Let $n \in \Z_{>0}$. We prove that there exist a finite set $V$ and finitely many algebraic curves $T_1, \ldots, T_k$ with the following property: if $(x_1, \ldots, x_n, y)$ is an $(n+1)$-tuple of pairwise distinct singular moduli such that $\prod_{i=1}^n (x_i - y)^{a_i}=1$ for some $a_1, \ldots, a_n \in \Z \setminus \{0\}$, then $(x_1, \ldots, x_n, y) \in V \cup T_1 \cup \ldots \cup T_k$. Further, the curves $T_1, \ldots, T_k$ may be determined explicitly for a given $n$.  
	\end{abstract}
	
	\dedicatory{To Jonathan Pila}
	
	\maketitle
	
	\section{Introduction}
	
	Let $\h$ denote the complex upper half plane. The modular group $\SL$ acts on $\h$ by fractional linear transformations. The modular $j$-function $j \colon \h \to \C$ is the unique holomorphic function $\h \to \C$ which is invariant under this action of $\SL$, has a simple pole at $i \infty$, and satisfies $j(i)=1728$ and $j(\rho) = 0$, where $\rho = \exp(2 \pi i /3)$. 
	
	A singular modulus is a complex number $j(\tau)$ for some $\tau \in \h$ such that $[\Q(\tau) : \Q] = 2$. For example, $0$ and $1728$ are both singular moduli. Equivalently, a singular modulus is the $j$-invariant of an elliptic curve with complex multiplication. Singular moduli are algebraic integers and generate the ring class fields of imaginary quadratic fields. By Schneider's theorem \cite[IIc]{Schneider37}, if $\tau \in \h$ is such that both $\tau, j(\tau) \in \alg$, then $j(\tau)$ is a singular modulus.
	
	In this paper, we consider multiplicative relations among differences $x-y$ of singular moduli $x, y$. Since $0$ is a singular modulus, every singular modulus is equal to the difference of two singular moduli. Our aim is to generalise the following theorem. 
	
	\begin{thm}\label{thm:fixedy}
		Let $n \in \Z_{>0}$. Let $y$ be a singular modulus. Then there exist only finitely many $n$-tuples $(x_1, \ldots, x_n)$ of pairwise distinct singular moduli $x_1, \ldots, x_n$ such that $y \notin \{x_1, \ldots, x_n\}$ and there exist $a_1, \ldots, a_n \in \Z \setminus \{0\}$ for which
		\[\prod_{i=1}^n (x_i - y)^{a_i} = 1.\]
	\end{thm}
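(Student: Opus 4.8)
The plan is to reduce the theorem to a bound on discriminants: since for every $B$ there are only finitely many singular moduli whose discriminant $\Delta$ satisfies $|\Delta| \le B$, and an $n$-tuple of singular moduli is determined by its set of coordinates, it suffices to produce $B = B(n,y)$ such that, in every tuple $(x_1, \ldots, x_n)$ as in the statement with associated exponents $a_1, \ldots, a_n$, each $x_i$ has discriminant $\Delta_i$ with $|\Delta_i| \le B$. I would argue by contradiction: take such a tuple with $D := \max_i |\Delta_i|$ as large as desired, and after reindexing assume $|\Delta_1| = D$.

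The proof would rest on three inputs on singular moduli. \emph{Metric behaviour.} The conjugates of a singular modulus $x$ of discriminant $\Delta$ correspond to the reduced binary quadratic forms $(a,b,c)$ of discriminant $\Delta$, the conjugate attached to $(a,b,c)$ having absolute value $e^{\pi\sqrt{|\Delta|}/a} + O(1)$; thus there is a unique ``dominant'' conjugate (the principal form, $a=1$) of absolute value $e^{\pi\sqrt{|\Delta|}} + O(1)$ exceeding all others, while a positive proportion of the conjugates — those whose form has $a$ in a suitable middle range $[\,|\Delta|^{2/5},\, c_y |\Delta|^{1/2}\,]$ — have absolute value between a fixed constant exceeding $2C_y$ (where $C_y := \max_\sigma |\sigma(y)| + 1$) and $e^{|\Delta|^{1/10}}$. \emph{Galois action.} The $\alg$-conjugates of a tuple of singular moduli of discriminants $\Delta_1, \ldots, \Delta_m$ are governed by the associated form class groups, and — provided the $\Delta_i$ avoid an explicit finite set together with certain congruence conditions — this action is as independent as that structure allows, so the conjugate of each coordinate may be prescribed essentially freely. \emph{A lower bound.} Since $\sigma(x_i - y)$ is a nonzero algebraic integer of controlled degree, $\log|\sigma(x_i - y)| \ge -[\Q(x_i - y):\Q]\, h(x_i - y) \gg -\sqrt{D}\,\log D$ for every $\sigma$.

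The argument I have in mind then runs as follows. By an effective form of the subgroup theorem, the lattice of multiplicative relations among $x_1 - y, \ldots, x_n - y$ admits a $\Z$-basis of vectors with coordinates bounded by a fixed power of $\max_i h(x_i - y) \ll \log D$; since our relation has $a_1 \ne 0$, some basis vector $(w_1, \ldots, w_n)$ has $w_1 \ne 0$ and $\max_i |w_i| \ll (\log D)^{c(n)}$. Put $S = \{i : w_i \ne 0\}$, so $1 \in S$ and $|S| \ge 2$ (otherwise $x_1 - y$ is a root of unity, impossible for $D$ large by the size of the dominant conjugate of $x_1$). I would choose $\sigma$ so that $\sigma x_1$ is the dominant conjugate of $x_1$ while, for each $i \in S \setminus \{1\}$, the form of $\sigma x_i$ lies in the middle range above; by the positive proportion and the independence just stated, such $\sigma$ exists for $D$ large, once exceptional discriminants are excluded. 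Taking $\log|\cdot|$ of $\prod_{i \in S}(\sigma x_i - \sigma y)^{w_i} = 1$ then gives $|w_1|\cdot\bigl|\log|\sigma x_1 - \sigma y|\bigr| \ge \tfrac12 \pi\sqrt{D}$, whereas $\bigl|\log|\sigma x_i - \sigma y|\bigr| \le \pi D^{1/10} + O(1)$ for $i \in S \setminus\{1\}$ (the lower bound is not even needed, since $\sigma x_i$ has been kept away from $\sigma y$ and its absolute value is moderate); summing, $\tfrac12\pi\sqrt{D} \ll |S|\,(\log D)^{c(n)}\, D^{1/10}$, which is absurd for $D$ large. Hence $D$ is bounded, and the finitely many tuples with $D \le B$ prove the theorem.

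The principal obstacle will be the Galois step — exhibiting a single $\sigma$ putting $x_1$ into its dominant conjugate while keeping the other coordinates of $S$ moderate. Here the genus-theoretic failure of independence of the class group actions must be confronted, an explicit finite set of exceptional discriminants treated separately, and — most delicately — one must handle the case in which several coordinates attain the maximal discriminant $D$ and are related by class group elements of small norm, so that forcing one into its dominant conjugate forces another to be large and a cancellation of leading terms becomes possible; in that case the crude estimate above has to be replaced by a finer analysis of the relation, passing to further conjugates if necessary. An alternative, geometric route would run the Pila–Zannier strategy: the relevant tuples lie in the CM locus of the countably many hypersurfaces $\prod_i (X_i - y)^{a_i} = 1$ in $\mathbb{A}^n$, and one would combine André–Oort for $Y(1)^n$ with a uniformity in the exponents to preclude any positive-dimensional special subvariety meeting the locus of pairwise-distinct tuples with no coordinate equal to $y$ — the obstacle there being precisely that uniformity, a Zilber–Pink-type input.
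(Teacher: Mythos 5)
Your proposal takes a genuinely different route from the paper. The paper's proof of Theorem~\ref{thm:fixedy} is short: Theorem~\ref{thm:fixedygd} reduces the case $y\notin(0,1728)$ to \cite[Theorem~1.6]{Fowler21} by verifying the ``divisor condition'' for $f(z)=j(z)-y$ (using that the preimage $\tau\in\mathfrak{F}_j$ of such $y$ satisfies $\im(-1/\tau)<\im\tau$), and Theorem~\ref{thm:fixedybad} handles a singular modulus $y\in(0,1728)$ by replacing $y$ with its dominant $\Q$-conjugate $y'$, which never lies in $(0,1728)$ since $|\Delta(y)|>4$; a formal lemma (Proposition~\ref{prop:mindep}) passes from minimal dependences back to arbitrary non-zero exponent vectors. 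You instead sketch a self-contained archimedean/Galois argument in the spirit of the singular-units work, which if completed would in principle give an alternative (and perhaps more explicit) proof. However, the sketch has two genuine gaps.

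First, the Galois step. The asserted ``independence'' of the class group actions, allowing one to put $\sigma x_1$ at its dominant conjugate while simultaneously placing every other $\sigma x_i$ ($i\in S$) in the middle range, is false in general. When two discriminants share a fundamental discriminant (equivalently, when the corresponding CM points are related by an isogeny / by a modular polynomial $\Phi_N$), the conjugates are strongly correlated; forcing $x_1$ to its dominant conjugate can force some $x_i$ to be exponentially large at the same time, and the leading terms may cancel. This is precisely the mechanism behind the multiplicative special curves studied in the rest of the paper, and it is not an ``exceptional finite set'' that can be excluded once and for all -- it occurs for infinitely many pairs of discriminants. You flag this as the principal obstacle and defer to ``a finer analysis of the relation''; as it stands this is an unfilled hole.

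Second, the exponent bound. You claim a basis of the relation lattice with entries $\ll(\log D)^{c(n)}$ via a ``fixed power of $\max_i h(x_i-y)\ll\log D$''. Neither estimate is right. By Proposition~\ref{prop:htsingmod}, $h(x_i)\ll_\epsilon|\Delta_i|^\epsilon$, which is \emph{not} $O(\log D)$; and the effective bound on exponents (Lemma~\ref{lem:expbd}, Loher--Masser/Yu) involves $d^{n+1}(1+\log d)\prod_{k\neq i}h(\alpha_k)$, where $d=[\Q(x_1,\ldots,x_n,y):\Q]$ can be as large as a positive power of $D$. The resulting exponent bound is polynomial in $D$, not polylogarithmic, and with it the concluding comparison $\tfrac12\pi\sqrt{D}\ll|S|\cdot(\text{exponent bound})\cdot D^{1/10}$ need not be absurd: the right-hand side can dominate the left. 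Shrinking the middle range does not help, since $D^{c(n)}$ alone already swamps $\sqrt{D}$ once $c(n)>1/2$. Some genuinely new idea (e.g.\ a much sharper exponent bound, or an inductive pigeonhole over conjugates that reduces $n$) would be required to close this gap, and indeed for general $n$ an effective archimedean proof of Theorem~\ref{thm:fixedy} does not appear to be in the literature. The paper avoids all of this by invoking \cite[Theorem~1.6]{Fowler21}, which is proved by the Pila--Zannier/o-minimal method rather than by the archimedean route you outline; your closing remark correctly identifies that the Andr\'e--Oort alternative also needs a Zilber--Pink-type uniformity, which is exactly what the o-minimal counting supplies.
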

	
	Theorem~\ref{thm:fixedy} was proved by Pila and Tsimerman \cite{PilaTsimerman17} for $y=0$ and by the third author \cite{Fowler21} for $y$ not in the real interval $(0, 1728)$. In Section~\ref{sec:fixedy}, we show that the result of \cite{Fowler21} directly implies the remaining case where $y$ is in the real interval $(0, 1728)$.
	
	This paper addresses the case where $y$ is allowed to vary over all singular moduli. That is, we consider $(n+1)$-tuples $(x_1, \ldots, x_n, y)$ of pairwise distinct singular moduli $x_1, \ldots, x_n, y$ such that
	\begin{align}\label{eq:dep}
		\prod_{i=1}^n (x_i - y)^{a_i} =1 \mbox{ for some $a_1,\ldots, a_n \in \Z \setminus \{0\}$}.
	\end{align}
	In this setting, one must account for the following situation. 
	
	\begin{definition}[{\cite[p.~1052]{BiluLucaMasser17}}]\label{def:jmap}
		A function $f \colon \h \to \C$ is called a $j$-map if either there exists a singular modulus $x$ such that $f(z) = x$ for every $z \in \h$, or there exists $g \in \GL$ such that $f(z) = j(g z)$ for every $z \in \h$. Here $\GL$ acts on $\h$ by fractional linear transformations.
	\end{definition}
	
	\begin{definition}
		Let $n \in \Z_{>0}$. Let $f_1, \ldots, f_n, f$ be pairwise distinct $j$-maps, at least one of which is non-constant. The set
		\[ \Big\{(f_1(z), \ldots, f_n(z), f(z)) : z \in \h\Big\}\]
		is called a multiplicative special curve in $\C^{n+1}$ if there exist $a_1, \ldots, a_n \in \Z \setminus \{0\}$ such that, for all $z \in \h$,
		\[ \prod_{i=1}^n (f_i(z) - f(z))^{a_i} = 1.\]
	\end{definition}
	
	Note that a multiplicative special curve is always an algebraic curve (see Proposition~\ref{prop:jset}). Clearly, any multiplicative special curve contains infinitely many $(n+1)$-tuples $(x_1, \ldots, x_n, y)$ of pairwise distinct singular moduli satisfying \eqref{eq:dep}. If $N \in \Z_{>0}$ is not a perfect square, then the modular polynomial $\Phi_N \in \Z[X, Y]$ gives rise to a multiplicative special curve, as we explain in Section~\ref{subsec:families}. Thus one cannot hope to show, for an arbitrary $n \in \Z_{>0}$, that there exist only finitely many such $(n+1)$-tuples $(x_1, \ldots, x_n, y)$.
	
	Instead, we prove that the multiplicative special curves arising from the modular polynomials are the only multiplicative special curves. In particular, for a given $n$, there are only finitely many multiplicative special curves in $\C^{n+1}$ and these may be determined effectively. 
	
	\begin{thm}\label{thm:MSC}
		Let $n \in \Z_{>0}$. Then there are only finitely many multiplicative special curves in $\C^{n+1}$ and these may be determined effectively. If $n \leq 5$, then there are no multiplicative special curves in $\C^{n+1}$.
	\end{thm}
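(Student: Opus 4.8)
The plan is to analyse multiplicative special curves via the $j$-maps that define them. Suppose $S = \{(f_1(z), \ldots, f_n(z), f(z)) : z \in \h\}$ is a multiplicative special curve, so $\prod_{i=1}^n (f_i(z) - f(z))^{a_i} = 1$ identically on $\h$ for some nonzero integers $a_i$. First I would reduce to the case where $f$ is non-constant: if $f \equiv y$ is constant, then each $f_i$ must be non-constant (the $j$-maps are pairwise distinct, and a relation among constants reduces to Theorem~\ref{thm:fixedy}, which would force only finitely many points, contradicting that $S$ is a curve with infinitely many singular-moduli points — so this case is impossible). Thus $f(z) = j(gz)$ for some $g \in \GL$, and after replacing $z$ by $g^{-1}z$ (a change of variable that does not affect whether $S$ is a multiplicative special curve) I may assume $f(z) = j(z)$. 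Similarly each $f_i$ is either a constant singular modulus or $j(g_iz)$ for some $g_i \in \GL$; at least one $f_i$ is non-constant.

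Next I would use the behaviour of $j$ near the cusp. As $z \to i\infty$, one has $j(z) \sim q^{-1}$ where $q = e^{2\pi i z}$, and more generally $j(g_iz)$ has a $q$-expansion in $q^{1/N_i}$ for a suitable $N_i$ depending on $g_i$, with a pole whose order is governed by $\det(g_i)$ and the lower-left entry of $g_i$. The key point is to compute the order of vanishing (or pole) at the cusp of each factor $f_i(z) - f(z) = f_i(z) - j(z)$ and of the whole product. Since the product is identically $1$, the total order must be zero; but for a non-constant $f_i = j(g_i z)$, the difference $j(g_iz) - j(z)$ has a pole at the cusp unless $g_i$ acts "like $j$ itself at the cusp", i.e.\ $g_i$ is upper triangular with equal diagonal entries up to the $\SL$-action — in which case $j(g_iz) - j(z)$ is essentially related to $\Phi_N$. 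Working out exactly which $g_i$ give differences with matching pole orders, and checking that the only way to cancel all the poles is for the $f_i - f$ to come from a single modular polynomial $\Phi_N$, is the heart of the argument. This is where I expect the main obstacle: controlling the $q$-expansions of the various $j(g_iz) - j(z)$ simultaneously and ruling out accidental cancellations, likely via a careful valuation/Newton-polygon computation at the cusp, together with the fact (Proposition~\ref{prop:jset}) that these sets are algebraic curves so one can also argue with their function fields.

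Having identified that a multiplicative special curve must arise from a relation $\prod (j(g_iz) - j(z))^{a_i} = 1$ coming from modular polynomials, I would then bound $N$: the modular polynomial $\Phi_N(X,Y)$ has a term structure (it is symmetric, and $\Phi_N(X,X)$ is a nonzero polynomial whose degree and leading behaviour are known) forcing the integers $a_i$ and the level $N$ to satisfy explicit constraints, and in particular only finitely many $N$ can occur for a fixed $n$, each effectively computable. For the final clause ($n \le 5$), I would combine this with the explicit shape of $\Phi_N$: realising a multiplicative relation among the differences of roots of $\Phi_N(X,j)$ requires enough roots, i.e.\ $\Phi_N$ has degree $\ge$ something in $X$, and the smallest non-square $N$ is $N=2$ with $\deg_X \Phi_2 = 3$, giving $3$ differences $j(g_1z)-j(z), j(g_2z)-j(z), j(g_3z)-j(z)$; I would check by direct computation with the $q$-expansions (or with the known factorisation of the resultant/discriminant data) that for all non-square $N$ with $\deg_X\Phi_N \le 5$ there is no nontrivial integer relation $\prod(\text{differences})^{a_i}=1$, which forces $n \ge 6$ for a multiplicative special curve to exist. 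The effectivity throughout comes from the explicit cusp expansions and the explicit bound on $N$.
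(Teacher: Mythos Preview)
Your proposal has the right opening moves (reducing to $f(z)=j(z)$, splitting the $f_i$ into constant and non-constant $j$-maps), but the central structural step is genuinely missing, and the one you sketch in its place is unlikely to work as stated.

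The paper does \emph{not} argue via $q$-expansions at the cusp. Instead, the key structural result (Theorem~\ref{thm:multind}) is obtained in two steps. First, Proposition~\ref{prop:multindnoconst} shows that pairwise distinct non-constant differences $j(g_i z)-j(z)$ are multiplicatively independent modulo constants by exhibiting an explicit \emph{interior} point $z_0\in\h$ at which exactly one of them vanishes; this rules out any relation among the non-constant factors alone. Second, and this is the idea you are missing, one uses the $\SL$-symmetry of $C(N)$: by Proposition~\ref{prop:jmaps}, for any $g,h\in C(N)$ there is $\gamma\in\SL$ with $j(g\gamma z)=j(hz)$, and substituting $z\mapsto\gamma z$ into the relation (which fixes the $j(z)$ side) permutes the factors within each $C(N)$. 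Comparing the two relations and invoking Proposition~\ref{prop:multindnoconst} forces every $g\in C(N_i)$ to occur, all with the \emph{same} exponent. This yields the exact shape $\prod_i F_{N_i}(j(z))^{b_i}$ for the non-constant part. Your cusp-valuation heuristic does not give this: different $g\in C(N)$ with the same ratio $a/d$ produce the same leading $q$-power, so pole orders alone cannot separate them, and there is no evident Newton-polygon obstruction to ``accidental'' cancellation without the symmetry argument.

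Two further points. Your claim that the relation must come from a \emph{single} $\Phi_N$ is false: products $\prod_i F_{N_i}^{b_i}$ with several distinct $N_i$ are allowed (and do occur, e.g.\ when leading coefficients cancel). And for the bound $n\le 5$, the paper's count is $n=m+l$ where $l=\sum_i\#C(N_i)$ counts the non-constant coordinates and $m\ge 1$ counts the constant coordinates, which are exactly the zeros/poles of $\prod_i F_{N_i}^{b_i}$ (non-empty by Corollary~\ref{cor:vanish}). Your sketch omits the constant coordinates entirely; without them the inequality you need does not follow. The actual check for small $n$ is then just that $\#C(2)=3$ with $F_2$ having $3$ distinct roots, and $\#C(3)=4$ with $F_3$ having $4$ distinct roots, so $m+l\ge 6$ in either case.
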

	
	We then prove that, for every $n \in \Z_{>0}$, the finitely many multiplicative special curves in $\C^{n+1}$ account for all but finitely many of the $(n+1)$-tuples $(x_1, \ldots, x_n, y)$ of pairwise distinct singular moduli satisfying \eqref{eq:dep}. 
	
	\begin{thm}\label{thm:main}
		Let $n \in \Z_{>0}$. Then there exist only finitely many $(n+1)$-tuples $(x_1, \ldots, x_n, y)$ of pairwise distinct singular moduli $x_1, \ldots, x_n, y$ such that
		\[\prod_{i=1}^n (x_i - y)^{a_i} = 1\]
		for some $a_1, \ldots, a_n \in \Z \setminus \{0\}$ and $(x_1, \ldots, x_n, y)$ does not belong to one of the finitely many multiplicative special curves in $\C^{n+1}$. 
	\end{thm}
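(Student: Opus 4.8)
The plan is to combine the André–Oort/Zilber–Pink-style machinery that Pila and Tsimerman developed for this kind of problem with a height bound, following the blueprint of \cite{PilaTsimerman17} and \cite{Fowler21} but now in a family where $y$ varies. First I would reformulate the problem in the mixed Shimura variety $Y(1)^{n+1} \times \G^n$ (or an appropriate torsor): a tuple $(x_1,\dots,x_n,y)$ satisfying \eqref{eq:dep} with exponents $(a_1,\dots,a_n)$ corresponds to a point lying on a subvariety $V_{\underline a}$ cut out by the modular equations (the $x_i,y$ are singular moduli) together with the multiplicative relation $\prod t_i^{a_i}=1$, where $t_i = x_i - y$. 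The set of such tuples is contained in the intersection of the "modular–torsion" locus with the algebraic variety defined by all the multiplicative relations as $\underline a$ ranges over $\Z^n$. By the Zilber–Pink philosophy, the points of this intersection that are "atypical" should lie on finitely many special subvarieties — and the content of Theorem~\ref{thm:MSC} is precisely that the only positive-dimensional special subvarieties arising here are the multiplicative special curves coming from modular polynomials. So the heart of the argument is to handle the finitely many tuples not lying on any of those curves.

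The key steps, in order, would be: (1) \emph{Galois and counting input.} For a tuple $(x_1,\dots,x_n,y)$ of pairwise distinct singular moduli with discriminants $\Delta_1,\dots,\Delta_n,\Delta$, one has strong lower bounds (due to Pila–Tsimerman, refined by Bilu–Luca–Masser and others) on the size of the Galois orbit of the tuple in terms of $\max_i |\Delta_i|$ and $|\Delta|$; these grow like a power of the largest discriminant. (2) \emph{Bounding the exponents.} Using that each $x_i - y$ is an algebraic integer of controlled height (height $\ll \log|\Delta_i| + \log|\Delta|$ by standard estimates on $j$-values at CM points) together with the product relation $\prod(x_i-y)^{a_i}=1$, one deduces via a Baker-type or elementary valuation/archimedean argument that $\max_i|a_i|$ is bounded polynomially in $\log$ of the largest discriminant — here one must be careful that $|x_i - y|$ can be very small (when $x_i,y$ are close CM points, e.g.\ both near the same real point of $(0,1728)$), which is exactly the subtlety that made the $y\in(0,1728)$ case of Theorem~\ref{thm:fixedy} delicate; I would quarantine that case by a separate archimedean analysis, bounding $|\log|x_i-y||$ below by a negative power of $\max|\Delta|$ unless the $x_i,y$ lie on a modular-polynomial relation. (3) \emph{The o-minimal/Pila–Wilkie step.} Having bounded the exponents, the whole problem is reduced to finitely many equations $V_{\underline a}$; each such equation defines a subvariety of $Y(1)^{n+1}$, and we apply the Pila–Wilkie counting theorem in the uniformising fundamental domain to the definable set (in $\RAE$) of preimages, combined with the Galois lower bound from step (1): either the discriminants are bounded (finitely many tuples), or the point lies in a positive-dimensional block, hence — by the Ax–Lindemann–Weierstrass theorem for $j$ and the classification of weakly special subvarieties — on a special subvariety, which by Theorem~\ref{thm:MSC} must be one of the listed multiplicative special curves.

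I expect the main obstacle to be step (2), the effective bounding of the exponents $a_i$ when $y$ (or some $x_i$) lies in the real interval $(0,1728)$, where differences of singular moduli can be archimedean-small. This is precisely the gap that the present paper fills in Theorem~\ref{thm:fixedy} for fixed $y$, and the uniform-in-$y$ version will require controlling, for pairs of CM points $\tau,\sigma$, a lower bound of the shape $|j(\tau)-j(\sigma)| \gg \max(|\Delta_\tau|,|\Delta_\sigma|)^{-\kappa}$ valid unless $\tau,\sigma$ are related by some $g\in\GL$ of bounded determinant (the latter forcing the tuple onto a modular-polynomial curve). Granting such an estimate — which should follow from the theory of the $j$-function near the real line together with bounds on how CM points cluster — the rest is a fairly standard assembly: Galois bound beats Pila–Wilkie bound, forcing all but finitely many tuples onto special curves, and Theorem~\ref{thm:MSC} identifies those curves. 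A secondary technical point is ensuring effectivity is not claimed here (the statement is pure finiteness, so ineffective height bounds of André–Oort type are permissible), which simplifies the argument compared to Theorem~\ref{thm:MSC}.
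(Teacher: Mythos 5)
Your overall strategy is aligned with the paper's: cast the problem in $\C^{n+1}\times(\C^\times)^n$, play a Galois lower bound (Siegel) against an o-minimal counting upper bound, invoke an Ax-type functional transcendence theorem to land in a (weakly) special subvariety, and then use Theorem~\ref{thm:MSC} to identify that subvariety as one of the listed multiplicative special curves. However, you misidentify the main technical point, and the tools you cite are not quite the right ones.

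The step you flag as the main obstacle --- bounding the exponents $a_i$ by an archimedean/Baker-type argument, and worrying about $|x_i-y|$ being tiny when $x_i,y$ are close CM points near $(0,1728)$ --- is not how the paper proceeds, and trying to do it your way would be genuinely painful. The paper instead invokes a result of Yu (via Loher--Masser, Lemma~\ref{lem:expbd} and Proposition~\ref{prop:nonminexpbd}): for a minimally multiplicatively dependent $n$-tuple in a number field $L$ of degree $d$, there exist exponents realising the dependence of size at most $c(n)\,d^{n+1}(1+\log d)\prod_{k\neq i} h(\alpha_k)$. This is a pure \emph{height} bound: it depends only on $d$ and on $h(x_i-y)$, and since heights of singular moduli grow subpolynomially in $|\Delta|$ (Proposition~\ref{prop:htsingmod}), one gets $|a_i|\le c_3\Delta^{c_4}$ with no need to control $|x_i-y|$ from below. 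A small archimedean value of $x_i-y$ simply doesn't contribute to the height at that place, so the ``closeness'' you are worried about never enters. There is no residual analytic case to quarantine, and no need for a lower bound of the form $|j(\tau)-j(\sigma)|\gg |\Delta|^{-\kappa}$.

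Two further mismatches in the counting step. First, plain Pila--Wilkie is not sufficient: the definable set built in Section~\ref{sec:pf} has the $\tau_i$ as quadratic algebraic coordinates but also carries the exponents $a_1,\dots,a_n$ and the branch integer $b$ as genuine real coordinates, so the paper uses the Habegger--Pila semi-rational counting theorem \cite[Cor.~7.2]{HabeggerPila16}, which handles points that are algebraic of bounded degree and height in some coordinates but arbitrary in others. Second, the functional transcendence input is not Ax--Lindemann but the mixed Ax--Schanuel (``Weak Complex Ax'', Theorem~\ref{thm:Ax2} of the paper, from Pila--Tsimerman's Ax--Schanuel for $j$ combined with Ax's theorem for $\exp$): since the semialgebraic arc produced by the counting theorem lives inside the preimage of a subvariety cut out by \emph{both} modular equalities and a multiplicative relation, one needs the statement that a complex-analytic component of an algebraic-set-meets-$\pi^{-1}(\text{algebraic})$ of unexpectedly large dimension is contained in a proper weakly special of the \emph{mixed} ambient. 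Ax--Lindemann alone would not give you control over the multiplicative coordinate. With these two substitutions your outline becomes essentially the paper's proof.
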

	
	Since there are no multiplicative special curves in $\C^{n+1}$ for $n \leq 5$, one immediately obtains the following corollary. Since (see Example~\ref{eg:mod}) there exists a multiplicative special curve in $\C^7$, the bound of $n \leq 5$ in this corollary is sharp.
	
	\begin{cor}\label{cor:small}
		Let $n \in \{1, \ldots, 5\}$. There exist only finitely many $(n+1)$-tuples $(x_1, \ldots, x_n, y)$ of pairwise distinct singular moduli $x_1, \ldots, x_n, y$ such that
		\[\prod_{i=1}^n (x_i - y)^{a_i} = 1\]
		for some $a_1, \ldots, a_n \in \Z \setminus \{0\}$.
	\end{cor}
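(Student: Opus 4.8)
The plan is simply to deduce Corollary~\ref{cor:small} from Theorems~\ref{thm:MSC} and~\ref{thm:main}, both of which are already available. First I would fix $n \in \{1, \ldots, 5\}$ and take an arbitrary $(n+1)$-tuple $(x_1, \ldots, x_n, y)$ of pairwise distinct singular moduli satisfying $\prod_{i=1}^n (x_i - y)^{a_i} = 1$ for some $a_1, \ldots, a_n \in \Z \setminus \{0\}$. By the final assertion of Theorem~\ref{thm:MSC}, there are no multiplicative special curves in $\C^{n+1}$ in this range of $n$. Hence the exceptional set appearing in the statement of Theorem~\ref{thm:main} — the union of the finitely many multiplicative special curves in $\C^{n+1}$ — is empty, so the condition that $(x_1, \ldots, x_n, y)$ does not belong to one of these curves is vacuously satisfied. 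Applying Theorem~\ref{thm:main} then shows that there are only finitely many such tuples, which is precisely the statement of the corollary.

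I expect no genuine obstacle here: the argument is a short logical deduction, and all the mathematical substance sits in the two invoked theorems — Theorem~\ref{thm:MSC} for the classification (and, crucially, the non-existence for $n \leq 5$) of multiplicative special curves, and Theorem~\ref{thm:main} for the finiteness statement modulo those curves. The only point requiring a moment's care is the bookkeeping: one must note explicitly that ``belonging to one of finitely many multiplicative special curves'' is an impossible condition when no such curves exist, so that the conditional conclusion of Theorem~\ref{thm:main} collapses to the unconditional finiteness asserted in the corollary.
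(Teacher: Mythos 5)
Your proposal is correct and matches the paper's own (one-line) deduction exactly: the paper simply notes, immediately before stating the corollary, that the non-existence of multiplicative special curves for $n \leq 5$ (Theorem~\ref{thm:MSC}) makes the exceptional set in Theorem~\ref{thm:main} empty, so finiteness follows immediately.
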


	The proof of Theorem~\ref{thm:main} uses o-minimality and is ineffective. Recently, Li \cite{Li21} has proved that the difference of two singular moduli is never a unit (in the ring of algebraic integers). Hence, there are no distinct singular moduli $x, y$ such that $(x-y)^a =1$ for some $a \in \Z \setminus \{0\}$. 
	
	\subsection{Modular polynomials and multiplicative special curves}\label{subsec:families}
	
	For background on modular polynomials, see \cite[\S11]{Cox89}. For $N \in \Z_{>0}$, let 
	\[ C(N) = \Big \{ \begin{pmatrix}
		a & b\\
		0 & d
	\end{pmatrix} \in \MZ : ad=N, a>0, 0 \leq b < d, \gcd(a, b, d) = 1 \Big \}.\] 
	There exists \cite[(11.15)]{Cox89} a polynomial $\Phi_N \in \Z[X, Y]$ with the property that
	\[ \Phi_N(X, j(z)) = \prod_{g \in C(N)} (X - j(g z)) \]
	for all $z \in \h$. The polynomial $\Phi_N$ is called the $N$th modular polynomial. 
	
	For $N > 1$, let $F_N \in \Z[X]$ be defined by $F_N(X) = \Phi_N(X, X)$. Then $F_N$ is a non-constant polynomial (the explicit formula in \cite[Proposition~13.8]{Cox89} in fact implies that $\deg F_N \geq 2N$). The roots of $F_N$ are all singular moduli (see Corollary~\ref{cor:rootsofF}). If $N$ is not a perfect square, then, by \cite[Theorem~11.18]{Cox89}, the polynomial $F_N$ has leading coefficient $\pm 1$. 
	
	Suppose then that $N \in \Z_{>1}$ is such that the leading coefficient of $F_N$ is $\pm 1$ (e.g.~take $N$ not a perfect square). Write $\alpha_1, \ldots, \alpha_k$ for the distinct roots of $F_N$ and $a_i$ for their multiplicities. Write $g_1, \ldots, g_l$ for the elements of $C(N)$. Since
	\[ F_N(j(z)) = \prod_{i=1}^l (j(z) - j(g_i z)) \]
	for all $z \in \h$, one thus obtains (doubling the exponents to eliminate a potential factor of $-1$) that 
	\begin{align*}
		\prod_{i=1}^k (j(z) - \alpha_i)^{2 a_i} = \prod_{i=1}^l (j(z) - j(g_i z))^2
	\end{align*}
	for all $z \in \h$, and hence, for all $z \in \h$,
	\begin{align}\label{eq:triv}
		\prod_{i=1}^k (j(z) - \alpha_i)^{2 a_i} \prod_{i=1}^l (j(z) - j(g_i z))^{-2}= 1.
	\end{align}
	In particular, the set
	\[ \Big\{(\alpha_1, \ldots, \alpha_k, j(g_1 z), \ldots, , j(g_l z), j(z)) : z \in \h\Big\}\]
	is a multiplicative special curve in $\C^{k+l+1}$.
	
	Further examples of multiplicative special curves may be generated by multiplying together integer powers of relations of the form \eqref{eq:triv} coming from different $F_N$. In this case, one must also consider polynomials $F_N$ with leading coefficient not equal to $\pm 1$, because these leading coefficients may cancel with one another. For example, $-2$ is the leading coefficient of both $F_4$ and $F_{16}$. Theorem~\ref{thm:MSCshape} will show that all the multiplicative special curves arise from the polynomials $F_N$ in this way.

	\begin{example}\label{eg:mod}
		The modular polynomial $\Phi_2$ is
		\begin{align*}
			\Phi_2(X, Y) =& -X^2Y^2 + X^3 + Y^3 + 1488(X^2Y + XY^2)\\ &-162000(X^2+Y^2) +40773375 XY \\
			& +8748000000(X+Y) - 157464000000000.
		\end{align*}
		Thus,
		\begin{align*}
			F_2(X) = &\Phi_2(X, X)\\
			= &-X^4 + 2978 X^3 + 40449375 X^2 + 17496000000 X\\ 
			&- 157464000000000\\
			=& - (X-1728)(X+3375)^2(X-8000).
		\end{align*}
		Note that $1728$, $-3375$, $8000$ are singular moduli of discriminant $-4$, $-7$, $-8$ respectively. The discriminant of a singular modulus $j(\tau)$ is $b^2-4ac$, where $a, b, c \in \Z$, not all zero, are such that $a \tau^2 +b \tau +c = 0$ and $\gcd(a,b,c)=1$. 
		
		Observe that
		\[C(2) = \Big \{\begin{pmatrix}
			2 & 0\\
			0 & 1
		\end{pmatrix}, 
		\begin{pmatrix}
			1 & 0\\
			0 & 2
		\end{pmatrix}, 
		\begin{pmatrix}
			1 & 1\\
			0 & 2
		\end{pmatrix}\Big \}.\]
		Thus,
		\[\Phi_2(X, j(z)) = \Big(X - j(2z)\Big )\Big(X - j\Big (\frac{z}{2}\Big) \Big)\Big(X - j\Big(\frac{z+1}{2}\Big)\Big).\]
		Hence, for all $z \in \h$,
		\begin{align}\label{eq:phi2}
			\begin{split}
				&-(j(z)-1728)(j(z)+3375)^2(j(z) - 8000)\\ 
				&= \Big(j(z) - j(2z)\Big )\Big(j(z) - j\Big (\frac{z}{2}\Big) \Big)\Big(j(z) - j\Big(\frac{z+1}{2}\Big)\Big).
			\end{split}
		\end{align}
		
		The set
		\[\Big \{\Big(1728, -3375, 8000, j(2z), j\Big(\frac{z}{2}\Big), j\Big(\frac{z+1}{2}\Big), j(z)\Big) : z \in \h\Big\}\]
		is thus a multiplicative special curve in $\C^7$.
		
		For an example of a $7$-tuple of singular moduli lying on this curve, take 
		\[z = \frac{-1 + \sqrt{163}i}{2}.\] 
		Then
		\[ j(z) = -262537412640768000 = -2^{18} \cdot 3^3 \cdot 5^3 \cdot 23^3 \cdot 29^3,\]
		which we denote by $k$, is a singular modulus of discriminant $-163$. In this case, $j(2z), j(z/2), j((z+1)/2)$ are the three singular moduli of discriminant $-652$. These are respectively the roots $r, s, \bar{s}$ of the irreducible polynomial
		\begin{align*}
			&X^3 - 68925893036109279891085639286946000 X^2\\ 
			&+ 102561728837719322645921325412908000000 X\\ 
			&-18095625621665522953693950872675200892692248000000000,
		\end{align*}
		where $r \in \R$ and $s, \bar{s}$ are complex conjugate with $s \in \h$. In this case, \eqref{eq:phi2} yields that
		\begin{align}\label{eq:egbig}
			-(k -1728)(k+3375)^2(k-8000)=(k-r)(k-s)(k-\bar{s}).
		\end{align}
		The prime factorisation of the two sides of \eqref{eq:egbig} is given by
		\begin{align*}
			&-2^{12} \cdot 3^{22} \cdot 5^{9} \cdot 7^6 \cdot 11^2 \cdot 13^3 \cdot 17^2 \cdot 19^2 \cdot 31^2 \cdot 37 \cdot 101 \cdot 103^2 \cdot 127^2\\ 
			&\cdot 157 \cdot 163 \cdot 229^2 \cdot 277 \cdot 283^2 \cdot 317.
		\end{align*} 	
	\end{example}

	\subsection{Multiplicative properties of differences of singular moduli}
	
	The study of the multiplicative properties of differences of singular moduli goes back at least as far as Berwick \cite{Berwick28}, who in 1927 determined the factorisations of $x$ and $x -1728$ for all singular moduli $x$ such that $[\Q(x) : \Q] \leq 3$. 
	
	The differences of singular moduli are highly divisible numbers, in the sense that they tend to have relatively small prime factors. For example,
	\[ j\Big(\frac{-1 + \sqrt{163} i}{2}\Big) - j\Big(\frac{-1 + \sqrt{67} i}{2}\Big) = -2^{15} \cdot 3^7 \cdot 5^3 \cdot 7^2 \cdot 13 \cdot 139 \cdot 331.\] 
	Example~\ref{eg:mod} gives another illustration of this tendency. This observation led Gross and Zagier \cite{GrossZagier85} to prove a formula for the prime ideal factorisations of differences of singular moduli, subject to some restrictions on the discriminants of the singular moduli considered. A version of their result for arbitrary discriminants has since been proved by Lauter and Viray \cite{LauterViray15}.
	
	Recent work on multiplicative relations among singular moduli, for example the proof of Theorem~\ref{thm:fixedy} by Pila and Tsimerman \cite{PilaTsimerman17} and the third author \cite{Fowler21}, has been motivated by connections to the Zilber--Pink conjecture on atypical intersections. 
	
	Effective results on multiplicative relations among singular moduli in low dimensions have also been studied extensively \cite{BiluGunTron22, BiluLucaMadariaga16, Fowler20, Fowler23, Riffaut19} as special cases of the Andr\'e--Oort conjecture for $\C^n$, which was proved ineffectively by Pila \cite{Pila11}. In particular, for $n \leq 3$, explicit bounds on multiplicatively dependent $n$-tuples of pairwise distinct singular moduli are known \cite{BiluGunTron22, Riffaut19}.
	
	For differences of singular moduli, the most general effective result we are aware of is Li's result \cite{Li21} that the difference of two singular moduli is never an algebraic unit.  Li's result generalised Bilu, Habegger, and K\"uhne's theorem \cite{BiluHabeggerKuhne18} that no singular modulus is a unit. Work on this topic was prompted by a question of Masser, answered affirmatively by Habegger \cite{Habegger15}, as to whether only finitely many singular moduli are algebraic units.

	\subsection{Structure of this paper}\label{subsec:struc}
	
	In Section~\ref{sec:prelim}, we give some of the basic results we will need for this paper. Section~\ref{sec:fixedy} completes the proof of Theorem~\ref{thm:fixedy}. The proof of Theorem~\ref{thm:MSC} is in Section~\ref{sec:MSC}. Section~\ref{sec:Ax} contains the functional transcendence results which are required for the proof of Theorem~\ref{thm:main}, which is then carried out in Section~\ref{sec:pf}. Finally, the connection to the Zilber--Pink conjecture is considered in Section~\ref{sec:ZP}.
	
	\vspace{5mm}
	\noindent
	{\sc Acknowledgements.} The authors would like to thank Gabriel Dill for helpful comments.
	
	\section{Preliminaries}\label{sec:prelim}
	
	\subsection{The fundamental domain}\label{subsec:fund}
	
	The group $\SL$ is generated by the matrices corresponding to the transformations $T \colon z \mapsto z+1$ and $S \colon z \to -1/z$. Let $\mathfrak{F}_j$ be the fundamental domain for the action of $\SL$ on $\h$ given by
	\[\Big \{ z \in \h : \re z \in \Big[-\frac{1}{2}, \frac{1}{2}\Big), \lvert z \rvert \geq 1, \mbox{ and if } \lvert z \rvert =1, \mbox{ then } \re z \in \Big[-\frac{1}{2}, 0\Big]\Big\}.\]
	This is a hyperbolic triangle with corners at $\rho, -\bar{\rho}, i \infty$. The $j$-function restricts to a bijection $j \colon \mathfrak{F}_j \to \C$.
	
	The $j$-function has a series expansion
	\[ j(z) = e^{-2 \pi i z} + 744 + \sum_{n=1}^\infty c(n) e^{2 n \pi i z},\]
	where the coefficients $c(n) \in \Z$. It follows immediately that the $j$-function is real valued on $\mathfrak{F}_j$ only along the boundary of $\mathfrak{F}_j$ and on the imaginary axis. Further, the image under $j$ of the set $\{z \in \mathfrak{F}_j : \lvert z \rvert =1\}$ is the real interval $[0, 1728]$.
	
	\begin{figure}
		
		\centering
		
		\begin{tikzpicture}[scale=2]
			\draw[-latex](\myxlow-0.1,0) -- (\myxhigh+0.2,0) node[below,font=\tiny] {$\re z$};
			\pgfmathsetmacro{\succofmyxlow}{\myxlow+0.5}
			
			\foreach \y  in {1}
			{   \draw (0,\y) -- (-0.05,\y) node[below ,font=\tiny]  {$i$ 
				};
			}
			
			{\draw (-0.5,0.866) -- (-0.5,0.866) node[left,font=\tiny] {$\rho$};
			}
			
			{\draw (0.5,0.866) -- (0.5,0.866) node[right,font=\tiny] 
				{$-\bar{\rho}$};
			}

			\draw[-latex](0,-0.1) -- (0,3.4) node[right,font=\tiny] {$\im z$};
			\draw[very thin, blue](-1.5,0) -- (-1.5,3);
			\draw[very thin, blue](-0.5,0) -- (-0.5,0.866);
			\draw[ blue, thick](-0.5,0.866) -- (-0.5,3);
			\draw[very thin, blue](0.5,0) -- (0.5,0.866);
			\draw[dashed, blue, thick](0.5,0.866) -- (0.5,3);
			\draw[very thin, blue](1.5,0) -- (1.5,3);
			
			\draw[very thin, blue] (0,0) arc(0:180:1);
			\draw[very thin, blue] (2,0) arc(0:180:1);
			\draw[very thin, blue] (-1,0) arc(0:90:1);
			\draw[very thin, blue] (1,0) arc(180:90:1);
			\draw[very thin, blue] (1,0) arc(0:60:1);
			\draw[very thin, blue] (-0.5,0.866) arc(120:180:1);
			
			\begin{scope}   
				\clip (\myxlow,0) rectangle (\myxhigh,1.1);
				\foreach \i in {2,...,\myiterations}
				{   \pgfmathsetmacro{\mysecondelement}{\myxlow+1/pow(2,floor(\i/3))}
					\pgfmathsetmacro{\myradius}{pow(1/3,\i-1}
					\foreach \x in {-2,\mysecondelement,...,2}
					{   \draw[very thin, blue] (\x,0) arc(0:180:\myradius);
						\draw[very thin, blue] (\x,0) arc(180:0:\myradius);
					}   
				}
			\end{scope}

			\draw [dashed, blue,thick,domain=60:90] plot ({cos(\x)}, {sin(\x)});
			\draw [blue,thick,domain=90:120] plot ({cos(\x)}, {sin(\x)});

			\begin{scope}
				\begin{pgfonlayer}{background}
					\clip (-0.5,0) rectangle (0.5,2.8);
					\clip   (1,2.8) -| (-1,0) arc (180:0:1) -- cycle;
					\fill[gray!20] (-1,-1) rectangle (1,2.8);
				\end{pgfonlayer}
			\end{scope}
		\end{tikzpicture}
		\caption{The fundamental domain $\mathfrak{F}_j$}
	\end{figure}
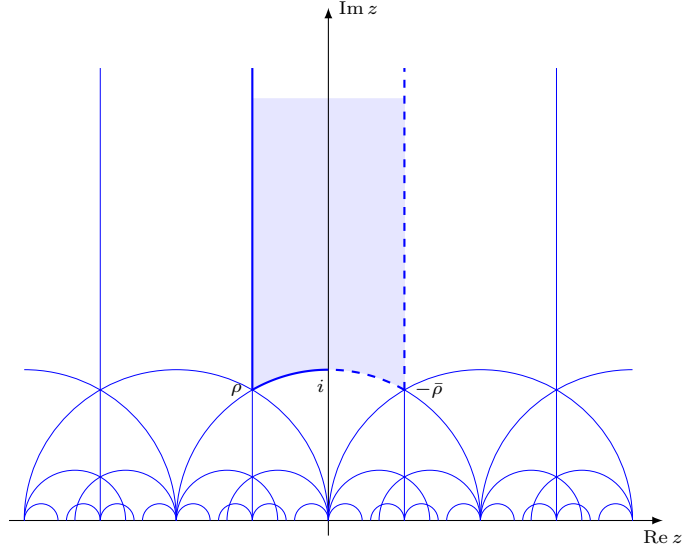

	\begin{prop}\label{prop:fundom}
		Let $z_0 \in \mathfrak{F}_j$. If $\re z_0 \neq -1/2$, then the $\SL$-orbit of $z_0$ is equal to
		\begin{align*}
			&\{z_0 + k : k \in \Z\} \cup \Big \{\frac{-1}{z_0} + k : k \in \Z\Big\}\\ &\cup \Big\{w \in \h : w \in \mathrm{Orbit}(z_0) \mbox{ and } \im w < \im \frac{-1}{z_0}\Big\}.
		\end{align*}
		If $\re z_0 = -1/2$, then the $\SL$-orbit of $z_0$ is equal to
		\begin{align*} 
			&\{z_0 + k : k \in \Z\} \cup \Big\{\frac{-1}{z_0} + k : k \in \Z\Big\} \cup \Big\{\frac{-1}{z_0+1} + k : k \in \Z\Big\}\\
			&\cup \Big\{w \in \h : w \in \mathrm{Orbit}(z_0) \mbox{ and } \im w < \im \frac{-1}{z_0}\Big\}.
		\end{align*}
	\end{prop}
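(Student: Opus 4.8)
The plan is to read the orbit off from the standard identity $\im(\gamma z_0) = \im z_0/\lvert cz_0+d\rvert^2$, where $\gamma = \begin{pmatrix} a & b\\ c & d\end{pmatrix} \in \SL$. The observation driving everything is that the three families displayed in the statement (two of them, when $\re z_0 \neq -1/2$) are precisely the orbit points whose imaginary part is at least $\im(-1/z_0)$, so that every remaining orbit point is automatically swept into the last set appearing in each formula, making that last inclusion a tautology. Thus the genuine content is: \emph{if $w$ lies in the $\SL$-orbit of $z_0$ and $\im w \geq \im(-1/z_0)$, then $w$ is one of the listed points}, together with the easy converse that each listed point lies in the orbit and has imaginary part at least $\im(-1/z_0)$.

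For the converse I would note that $z_0 + k = T^k z_0$, that $-1/z_0 + k = T^k S z_0$, and that $-1/(z_0+1) + k = T^k S T z_0$, so all three families lie in the orbit; moreover $\im(z_0+k) = \im z_0 \geq \im z_0/\lvert z_0\rvert^2 = \im(-1/z_0)$ because $\lvert z_0\rvert \geq 1$, while $\im(-1/z_0+k) = \im(-1/z_0)$ and, when $\re z_0 = -1/2$, $\lvert z_0+1\rvert^2 = \lvert z_0\rvert^2 + 2\re z_0 + 1 = \lvert z_0\rvert^2$, so $\im(-1/(z_0+1)+k) = \im(-1/z_0)$ as well. For the main direction, write $w = \gamma z_0$; then $\im w \geq \im(-1/z_0)$ is equivalent to $\lvert cz_0+d\rvert^2 \leq \lvert z_0\rvert^2$. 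Expanding $\lvert cz_0+d\rvert^2 = c^2(\im z_0)^2 + (c\re z_0+d)^2$ and using that $z_0 \in \mathfrak{F}_j$ forces $(\re z_0)^2 \leq 1/4$ and $(\im z_0)^2 = \lvert z_0\rvert^2 - (\re z_0)^2 \geq 3/4$, the inequality yields $(c^2-1)(\im z_0)^2 \leq (\re z_0)^2 < (\im z_0)^2$, hence $c \in \{-1,0,1\}$. If $c = 0$, then $d = \pm 1$ and $w$ is a horizontal translate of $z_0$. If $c = \pm 1$, then after replacing $\gamma$ by $-\gamma$ we may assume $c = 1$, and $\lvert z_0+d\rvert^2 \leq \lvert z_0\rvert^2$ simplifies to $d(d+2\re z_0) \leq 0$; since $\re z_0 \in [-1/2, 1/2)$ the only integer solutions are $d = 0$ (always) and $d = 1$ (exactly when $\re z_0 = -1/2$), and a one-line computation of $\gamma z_0$ in each case produces a horizontal translate of $-1/z_0$, respectively of $-1/(z_0+1)$.

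Everything here is elementary, so I do not anticipate a genuine obstacle; the one point that needs care is precisely the case analysis on $d$ when $c = \pm 1$, where it is the \emph{half-open} normalisation $\re z_0 \in [-1/2, 1/2)$ of the fundamental domain $\mathfrak{F}_j$ that separates the generic case from the case $\re z_0 = -1/2$ and accounts for the extra family $\{-1/(z_0+1) + k : k \in \Z\}$ in the second displayed formula. The only bookkeeping subtlety is to check that the stated decompositions still hold verbatim at the boundary points of $\mathfrak{F}_j$ — for instance at $z_0 = \rho$, where $\im(-1/z_0) = \im z_0$ and the three families all sit at the common maximal height — which the argument above handles uniformly since it never assumes $\im(-1/z_0) < \im z_0$.
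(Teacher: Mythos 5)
Your proof is correct, and it takes a genuinely different route from the paper's. The paper argues via the reduction algorithm: it first describes the standard procedure of alternately translating by $T^k$ and inverting by $S$ to land in $\mathfrak{F}_j$, proves termination using the same identity $\im \gamma z = \im z / \lvert cz + d\rvert^2$, and then classifies orbit points of $z_0$ by running the algorithm on a general $w_0 \in \mathrm{Orbit}(z_0)$ and tracking how the imaginary part evolves through the final one or two steps before reaching $z_0$. You instead skip the algorithm entirely: you note that the last set in each display absorbs every low orbit point tautologically, reducing the proposition to classifying $\gamma \in \SL$ with $\lvert c z_0 + d \rvert \le \lvert z_0 \rvert$, and then you read off $c \in \{-1,0,1\}$ from $z_0 \in \mathfrak{F}_j$ and finish the $c=\pm 1$ case by solving $d(d+2\re z_0) \le 0$. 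Your argument is shorter and more self-contained, it directly exhibits which matrices produce each family ($T^k$, $T^k S$, and $T^k S T$ when $\re z_0 = -1/2$), and it handles the boundary cases such as $z_0 = \rho$ without special treatment. The paper's algorithmic phrasing makes the termination and uniqueness of the fundamental-domain representative explicit, which can be convenient for exposition, but neither of these is needed elsewhere in the paper, so for the proposition at hand your direct computation is the more economical proof.
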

	
	\begin{proof}
		First, we claim that the following algorithm applied to a point $z \in \h$ will output the unique point in $\mathfrak{F}_j \cap \mathrm{Orbit}(z)$.
		
		\begin{enumerate}
			\item If $z \in \mathfrak{F}_j$, then output $z$. Otherwise proceed to step (2).
			\item Replace $z$ with $z + k$, where $k \in \Z$ is such that $\re (z+k) \in [-1/2, 1/2)$.
			\item If $z \in \mathfrak{F}_j$, then output $z$. Otherwise proceed to step (4).
			\item Replace $z$ with $-1/z$. Return to step (1).
		\end{enumerate}
		
		Clearly, if this algorithm terminates, then it outputs the unique point in $\mathfrak{F}_j$ in the same $\SL$-orbit as the initial input. We claim that this algorithm always terminates. To prove this, note that $S \colon z \mapsto -1/z$ sends $z$ to
		\[ -\frac{\re z}{\lvert z \rvert^2} + \frac{\im z}{\lvert z \rvert^2} i.\]
		In particular, if $\lvert z \rvert < 1$, then the imaginary part of $-1/z$ is strictly larger than $\im z$. Now every application of step (4) is performed on some $z$ with $\lvert z \rvert \leq 1$. And if $\lvert z \rvert = 1$, then applying (4) immediately yields a point in $\mathfrak{F}_j$. 
		
		Hence, it suffices to prove that, given $z \in \h$ with $\re z \in [-1/2, 1/2)$, there are only finitely many $\gamma \in \SL$ with $\re \gamma z \in [-1/2, 1/2)$ and $\im \gamma z > \im z$. Write 
		\[ \gamma = \begin{pmatrix}
			a & b\\
			c & d
		\end{pmatrix}.
		\]
		Then
		\[\im \gamma z = \im \frac{az+b}{cz + d} = \frac{1}{\lvert cz + d \rvert^2} \im z,\]
		and so $\im \gamma z > \im z$ implies that
		\[ \lvert cz + d \rvert^2 < 1.\]
		Hence,
		\[ (c \re z + d)^2 + (c \im z)^2 < 1.\]
		Hence, there are only finitely many possibilities for $c$, and for each such $c$, only finitely many possibilities for $d$. So we may assume that $c, d$ are fixed.
		
		We now show that, for the pair $(c, d)$, there exists a unique pair $(a, b)$ such that
		\[ \begin{pmatrix}
			a & b\\
			c & d
		\end{pmatrix} \in \SL\]
		and
		\[\re \Big(\begin{pmatrix}
			a & b\\
			c & d
		\end{pmatrix} z\Big) \in \Big[-\frac{1}{2}, \frac{1}{2}\Big).\]
		Suppose $a, b, a', b'$ are such that 
		\[ \begin{pmatrix}
			a & b\\
			c & d
		\end{pmatrix}, \begin{pmatrix}
			a' & b'\\
			c & d
		\end{pmatrix} \in \SL.
		\]
		So $ad-bc =1$ and $a'd-b'c=1$. Hence, by B\'ezout's Lemma, there exists $k \in \Z$ such that 
		\[ (a', b') = (a+kc, b+kd).\]
		Thus,
		\[\begin{pmatrix}
			a' & b'\\
			c & d
		\end{pmatrix}z = \frac{(a+kc)z+(b+kd)}{cz + d} = \frac{az+b}{cz + d} + k.\]
		In particular, there is a unique $k \in \Z$ (and hence a unique pair $(a', b')$) such that
		\[ \re \Big(\begin{pmatrix}
			a' & b'\\
			c & d
		\end{pmatrix}z\Big) \in \Big[-\frac{1}{2}, \frac{1}{2}\Big).\]
		
		Thus, the algorithm always terminates. We may now complete the proof of the proposition. Let $z_0 \in \mathfrak{F}_j$. The proposition amounts to classifying all the $w \in \mathrm{Orbit}(z_0)$ such that
		\[ \im w \geq \im \frac{-1}{z_0}.\]
		
		Suppose first that $\re z_0 \neq -1/2$. Let $w_0 \in \mathrm{Orbit}(z_0)$ be such that 
		\[ w_0 \notin \{z_0 + k : k \in \Z\} \cup \Big\{\frac{-1}{z_0} + k : k \in \Z\Big\}.\]
		We claim that $\im w_0 < \im (-1/z_0)$. Applying the above algorithm to $w_0$, we must obtain $z_0$ after finitely many steps. The last transformation applied is either $z \mapsto -1/z$ or $z \mapsto z+k$ for some $k \in \Z \setminus \{0\}$. Recall that $z \mapsto -1/z$ is its own inverse.
		
		If the last transformation applied is $z \mapsto -1/z$, then the algorithm applied to $w_0$ must pass through $-1/z_0$. By assumption on $w_0$, this must happen after an application of $z \mapsto -1/z$, which must have strictly increased the imaginary part.
		
		If the last transformation applied is $z \mapsto z+k$ with $k \neq 0$, then the transformation prior to that must have been $z \mapsto -1/z$. We thus must have that
		\[ \im w_0 \leq \im \frac{-1}{z_0 - k} = \frac{1}{\lvert z_0 - k \rvert^2} \im z_0.\]
		Then 
		\[ \lvert z_0 - k \rvert > \lvert z_0 \rvert \geq 1,\]
		since $\re z_0 \in (-1/2, 1/2)$ and $k \neq 0$. Hence, 
		\[ \im w_0 < \frac{1}{\lvert z_0 \rvert^2} \im z_0 = \im \frac{-1}{z_0}\leq \im z_0.\]
		
		Now let $z_0 \in \mathfrak{F}_j$ be such that $\re z_0 = -1/2$. Let $w_0 \in \mathrm{Orbit}(z_0)$ be such that 
		\[ w_0 \notin \{z_0 + k : k \in \Z\} \cup \Big\{\frac{-1}{z_0} + k : k \in \Z\big\}\cup \Big\{\frac{-1}{z_0+1} + k : k \in \Z\Big\}.\]
		We claim that $\im w_0 < \im (-1/z_0)$. To show this, we repeat the above argument. The only place where $\re z \neq -1/2$ was used was to obtain the inequality 
		\[ \lvert z_0 - k \rvert > \lvert z_0 \lvert.\]
		If $k \neq -1$, then this inequality still holds and the above argument works. So assume $k = -1$. Then, by the assumption that
		\[w_0 \notin \Big\{\frac{-1}{z_0+1} + k : k \in \Z \Big\},\]
		there must have been an application of $z \mapsto -1/z$ prior to passing through $-1/(z_0 + 1)$ and this application must have strictly increased the imaginary part. Hence,
		\[ \im w_0 < \frac{1}{\lvert z_0 +1 \rvert^2} \im z_0 = \im \frac{-1}{z_0}\leq \im z_0.\]
		This completes the proof.
	\end{proof}

	\subsection{Singular moduli}\label{subsec:singmods}
	
	Let $\tau \in \h$ be such that $[\Q(\tau) : \Q]=2$. So
	\[ a \tau^2 + b \tau + c =0\]
	for some $(a,b,c) \in \Z^3 \setminus \{(0,0,0)\}$ with $\gcd(a,b,c)=1$. The discriminant of the singular modulus $j(\tau)$ is defined to be
	\[ b^2 - 4ac.\]
	This depends only on the value of $j(\tau)$ and not on the choice of $\tau$. For a singular modulus $x$, write $\Delta(x)$ for the discriminant of $x$. The singular moduli of a given discriminant $\Delta$ form a complete set of $\Q$-conjugates \cite[Proposition~13.2]{Cox89}.
	
	\begin{lem}\label{lem:fixedpt}
		Suppose that $z \in \h$ is such that $g z = z$ for some $g \in \MZ$ such that $\det g > 0$ and $\lambda g \neq \mathrm{Id}_2$ for every $\lambda \in \Q^\times$. Then $j(z)$ is a singular modulus and $\lvert \Delta(j(z)) \rvert \leq 4 \det g$.
	\end{lem}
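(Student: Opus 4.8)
The plan is to analyse the fixed-point equation $gz = z$ directly. Write $g = \begin{pmatrix} a & b \\ c & d \end{pmatrix} \in \MZ$ with $\det g = ad - bc > 0$. The condition $gz = z$ means $\frac{az+b}{cz+d} = z$, i.e. $cz^2 + (d-a)z - b = 0$. Since $z \in \h$ is not real, the polynomial $P(X) = cX^2 + (d-a)X - b \in \Z[X]$ is not identically zero; in particular $c \neq 0$ (if $c = 0$ then either $a = d$, forcing $b = 0$ and hence $g$ scalar, contradicting the hypothesis $\lambda g \neq \mathrm{Id}_2$, or $a \neq d$ and $z$ would be the rational number $b/(d-a)$, contradicting $z \in \h$). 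So $z$ satisfies a genuine quadratic equation over $\Q$, whence $[\Q(z):\Q] = 2$ and $j(z)$ is a singular modulus.

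Next I would compute the discriminant. The singular modulus $j(z)$ has discriminant equal to the discriminant of the primitive integer quadratic form associated to $z$. The polynomial $P(X) = cX^2 + (d-a)X - b$ has discriminant $(d-a)^2 + 4bc = (a+d)^2 - 4(ad-bc) = (a+d)^2 - 4\det g = (\operatorname{tr} g)^2 - 4 \det g$. Dividing out the content $e = \gcd(c, d-a, b)$ gives a primitive form, and $\Delta(j(z)) = \frac{(\operatorname{tr} g)^2 - 4 \det g}{e^2}$. Since $z \in \h$ we have $P(X)$ with negative discriminant, so $(\operatorname{tr} g)^2 - 4\det g < 0$, and thus $\lvert \Delta(j(z)) \rvert \leq 4\det g - (\operatorname{tr} g)^2 \leq 4 \det g$. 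This gives the claimed bound.

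The only point requiring slight care is the identification of $\Delta(j(z))$ with the discriminant of the minimal primitive quadratic equation satisfied by $z$ — this is exactly the definition recalled in Section~\ref{subsec:singmods} (and in Example~\ref{eg:mod}), so no real work is needed. I also need to double-check that the hypothesis $\lambda g \neq \mathrm{Id}_2$ for all $\lambda \in \Q^\times$ is used only to rule out the degenerate case where $g$ is scalar (which would fix every point of $\h$ and carry no arithmetic information); the argument above invokes it precisely there. The main (mild) obstacle is just bookkeeping with the content $e$: one must observe that dividing $c, d-a, -b$ by their gcd divides the discriminant by $e^2$, and that this only decreases $\lvert \Delta \rvert$, so the bound $\lvert \Delta(j(z))\rvert \le 4 \det g$ survives. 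No o-minimality or deep input is needed; this is an elementary computation.
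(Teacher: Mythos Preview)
Your proof is correct and follows essentially the same approach as the paper's own argument: both derive the quadratic $cz^2 + (d-a)z - b = 0$ from the fixed-point equation, use the hypothesis that $g$ is not a rational scalar to rule out the degenerate case, and compute the discriminant via the identity $(d-a)^2 + 4bc = (a+d)^2 - 4\det g$, dividing by the square of the content. Your treatment is in fact slightly more explicit than the paper's in arguing that $c \neq 0$ (the paper simply notes that some coefficient is non-zero and leaves the reader to infer from $z \in \h$ that the equation must be genuinely quadratic), but this is a cosmetic difference, not a substantive one.
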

	
	\begin{proof}
		Let $N = \det g$. Write
		\[g = \begin{pmatrix}
			a & b\\
			c & d
		\end{pmatrix}.\]
		So $ad-bc = N$. Since $z$ is a fixed point, we have that
		\[z = \frac{az+b}{cz+d}.\]
		Thus, 
		\[cz^2 + (d-a)z - b =0.\]
		If $b=c=d-a=0$, then $N = a^2$ and $g = a \mathrm{Id}_2$, which is excluded. So some coefficient of this quadratic equation is non-zero. Hence, $j(z)$ is a singular modulus. Let $h = \gcd(c, -b, (d-a))$. Then 
		\[\Delta(j(z)) = \Big(\frac{d-a}{h}\Big)^2 +  \frac{4bc}{h^2}.\]
		Since $bc = ad - N$, we have that
		\[\Delta(j(z)) = \frac{1}{h^2}((a+d)^2 - 4N).\]
		In particular, $\lvert \Delta(j(z)) \rvert \leq 4N$.
	\end{proof}
	
	\begin{prop}\label{prop:eqcoordsy}
		Suppose that $z \in \h$ is such that
		\[j(z) = j(g z)\]
		for some $g \in \MZ$ such that $\det g >0$ and $\lambda g \notin \SL$ for every $\lambda \in \Q^\times$. Then $j(z)$ is a singular modulus and $\lvert \Delta(j(z)) \rvert \leq 4 \det g$.
	\end{prop}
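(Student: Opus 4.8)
The plan is to reduce this statement to Lemma~\ref{lem:fixedpt} by producing, from the hypothesis $j(z) = j(gz)$, an element of $\mathrm{M}_2(\Z)$ with positive determinant that actually fixes $z$ (up to a scalar that does not matter). Since $j$ restricts to a bijection on the fundamental domain and two points of $\h$ have the same $j$-invariant precisely when they lie in the same $\SL$-orbit, the equality $j(z) = j(gz)$ gives us some $\gamma \in \SL$ with $\gamma g z = z$. Set $h = \gamma g$. Then $h \in \mathrm{M}_2(\Z)$, $\det h = \det \gamma \cdot \det g = \det g > 0$, and $hz = z$.

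The only thing that must be checked before invoking Lemma~\ref{lem:fixedpt} is that $\lambda h \neq \mathrm{Id}_2$ for every $\lambda \in \Q^\times$, i.e.\ that $h$ is not a scalar matrix. Suppose for contradiction that $h = \gamma g = \mu \, \mathrm{Id}_2$ for some $\mu \in \Q^\times$. Then $g = \mu \gamma^{-1}$, and since $\gamma^{-1} \in \SL$ we would have $\mu^{-1} g = \gamma^{-1} \in \SL$, contradicting the hypothesis that $\lambda g \notin \SL$ for every $\lambda \in \Q^\times$. Hence $h$ is not a scalar, Lemma~\ref{lem:fixedpt} applies, and we conclude that $j(z) = j(hz) \cdot$ — rather, $j(z)$ is a singular modulus with $\lvert \Delta(j(z)) \rvert \leq 4 \det h = 4 \det g$, as required.

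The only slightly delicate point — and hence the ``main obstacle'', though it is a mild one — is justifying the passage from $j(z) = j(gz)$ to the existence of $\gamma \in \SL$ with $\gamma g z = z$: this uses that $j(w_1) = j(w_2)$ for $w_1, w_2 \in \h$ implies $w_1, w_2$ are $\SL$-equivalent, which follows from the fact (recalled in Section~\ref{subsec:fund}) that $j$ restricts to a bijection $\mathfrak{F}_j \to \C$ together with the fact that $\mathfrak{F}_j$ is a fundamental domain for the $\SL$-action. Everything else is the bookkeeping with determinants and scalar matrices indicated above.
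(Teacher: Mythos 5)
Your proof is correct and follows exactly the paper's route: use $j(\mathfrak{F}_j) = \C$ bijectively to get $\gamma \in \SL$ with $\gamma g z = z$, and then apply Lemma~\ref{lem:fixedpt} to $\gamma g$. You also explicitly verify the non-scalar hypothesis of Lemma~\ref{lem:fixedpt}, which the paper leaves implicit but which is indeed required.
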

	
	\begin{proof}
		Since 
		\[j(z) = j(g z),\]
		there exists $\gamma \in \SL$ such that
		\[\gamma z = gz.\]
		In particular, $z$ is a fixed point for the action of the integer matrix $\gamma^{-1} g$ on $\h$. Apply Lemma~\ref{lem:fixedpt} to $\gamma^{-1} g$.
	\end{proof}
	
	\begin{cor}\label{cor:rootsofF}
		Let $x \in \C$ be a root of the polynomial $F_N$ for some $N \in \Z_{>1}$. Then $x$ is a singular modulus and $\lvert \Delta(x) \rvert \leq 4N$. Further, if $N \in \Z_{>1}$, then every singular modulus of discriminant $-4N$ is a root of $F_N$.
	\end{cor}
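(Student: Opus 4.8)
The plan is to prove Corollary~\ref{cor:rootsofF} in two parts, corresponding to the two assertions. For the first part, suppose $x \in \C$ is a root of $F_N = \Phi_N(X,X)$ for some $N \in \Z_{>1}$. Since $j \colon \mathfrak{F}_j \to \C$ is a bijection, write $x = j(z)$ for some $z \in \h$. By the defining property of $\Phi_N$, we have $0 = F_N(x) = \Phi_N(j(z), j(z)) = \prod_{g \in C(N)} (j(z) - j(gz))$, so there exists $g \in C(N)$ with $j(z) = j(gz)$. Each such $g$ has $\det g = N > 1$, so $g$ is not a scalar multiple of an element of $\SL$ (a scalar multiple $\lambda \gamma$ with $\gamma \in \SL$ would have determinant $\lambda^2$, and matching entries would force $g$ to have determinant a square times $1$; more carefully, $g \in C(N)$ is upper triangular with entries $a, b, d$ and $ad = N$, and if $\lambda g \in \SL$ then $\lambda^2 ad = 1$, impossible for $N > 1$ since it would need $\lambda^2 = 1/N \notin (\Q^\times)^2$ forcing... in any case $\lambda g \in \SL$ requires $\det(\lambda g) = \lambda^2 N = 1$, so $N = 1/\lambda^2$, and since $N$ is a positive integer $> 1$ this is impossible). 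Hence Proposition~\ref{prop:eqcoordsy} applies to $g$, giving that $x = j(z)$ is a singular modulus with $\lvert \Delta(x) \rvert \leq 4 \det g = 4N$.

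For the second part, let $x$ be a singular modulus of discriminant $-4N$ with $N \in \Z_{>1}$. I would write $x = j(\tau)$ where $\tau \in \h$ satisfies a primitive quadratic $a\tau^2 + b\tau + c = 0$ with $b^2 - 4ac = -4N$; then $b$ is even, say $b = 2b'$, and we may take $\tau = (-b' + \sqrt{-N})/a$ after normalising, or more simply observe that $\tau$ can be chosen so that $\tau$ and $N\tau^{-1}$-type relations hold. The cleanest route is: the order $\Z[\sqrt{-N}]$ has discriminant $-4N$, and since $\Delta(x) = -4N$ the point $\tau$ can be taken with $\Z[\tau] \supseteq$ an element of norm related to $N$; concretely, there is a matrix $g \in \MZ$ with $\det g = N$ fixing $\tau$, namely one realising multiplication by $\sqrt{-N}$ on the lattice. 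Then $j(g\tau) = j(\tau) = x$, and expanding $\Phi_N(x, x) = \prod_{h \in C(N)}(x - j(h\tau))$: the matrix $g$, being an integer matrix of determinant $N$, equals $\gamma g'$ for some $\gamma \in \SL$ and $g' \in C(N)$ (by the standard coset decomposition $\MZ^{\det = N} = \bigsqcup_{h} \SL \cdot h$ ranging over a set containing $C(N)$ — strictly $C(N)$ parametrises the primitive ones, so one must check $g$ can be taken primitive, which holds because $\Delta(x) = -4N$ exactly, not a proper divisor pattern). Then $x - j(g'\tau) = x - j(\gamma^{-1} g \tau) = x - j(g\tau) = 0$, so $x$ is a root of $F_N$.

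The main obstacle I anticipate is the second part: carefully producing the integer matrix $g$ of determinant exactly $N$ fixing $\tau$ and ensuring it is primitive (i.e. $\gcd$ of entries is $1$ after removing the part forced by being a scalar), so that it genuinely corresponds to an element of $C(N)$ rather than $C(N')$ for some $N' \mid N$. This is where the hypothesis that the discriminant is \emph{exactly} $-4N$ (as opposed to $-4N/k^2$) is used. I would handle this by taking a primitive binary quadratic form $[a, b, c]$ of discriminant $-4N$ representing $x$, setting $\tau$ to be its root, and then exhibiting $g = \begin{pmatrix} -b/2 & -c \\ a & b/2 \end{pmatrix}$, which has $\det g = -b^2/4 + ac = N$ and satisfies $g\tau = \tau$ by the defining quadratic relation; primitivity of $g$ follows from primitivity of the form together with $b$ even. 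Then $j(g\tau) = j(\tau)$, and reducing $g$ against the coset representatives in $C(N)$ (possible since $g$ is primitive of determinant $N$) exhibits $x$ as a root of the corresponding factor of $\Phi_N(X, X)$. The remaining bookkeeping — that every primitive integer matrix of determinant $N$ is $\SL$-equivalent on the left to a unique element of $C(N)$ — is standard and can be cited from \cite[§11]{Cox89}.
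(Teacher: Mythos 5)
Your first part follows the same route as the paper: use the factorisation $F_N(j(z)) = \prod_{g \in C(N)}(j(z) - j(gz))$, produce $z$ and $g \in C(N)$ with $j(z) = j(gz)$, and invoke Proposition~\ref{prop:eqcoordsy}. One small slip: your verification that $\lambda g \notin \SL$ for all $\lambda \in \Q^\times$ rests on the claim ``$\lambda^2 N = 1$, so $N = 1/\lambda^2$, impossible for a positive integer $N > 1$,'' but this is false when $N$ is a perfect square (e.g.\ $N = 4$, $\lambda = 1/2$). The correct argument, which you gesture at but don't complete, uses primitivity: if $\lambda g \in \MZ$ with $\gcd(a,b,d)=1$ then writing $1 = ua + vb + wd$ gives $\lambda = u(\lambda a) + v(\lambda b) + w(\lambda d) \in \Z$, whence $\lambda^2 N = 1$ forces $N = 1$, a contradiction. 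The paper silently treats this hypothesis as obvious and does not verify it.

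Your second part is a genuinely different argument from the paper's. The paper exhibits a single root $j(\sqrt{N}i)$ of $F_N$ (via $j(\sqrt{N}i) = j(-1/(\sqrt{N}i))$ and the matrix $\bigl(\begin{smallmatrix}1 & 0 \\ 0 & N\end{smallmatrix}\bigr) \in C(N)$), observes that it has discriminant $-4N$, and then appeals to two facts: that singular moduli of a fixed discriminant form a complete set of $\Q$-conjugates, and that $F_N \in \Z[X]$, so all conjugates are roots. You instead argue uniformly: for any singular modulus $x = j(\tau)$ of discriminant $-4N$, take the primitive form $[a, 2b', c]$ with root $\tau$ and explicitly build the primitive integer matrix $g = \bigl(\begin{smallmatrix}-b' & -c \\ a & b'\end{smallmatrix}\bigr)$ of determinant $N$ fixing $\tau$, then left-reduce $g$ to $C(N)$. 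This avoids the Galois fact entirely and is more constructive, at the cost of needing the coset decomposition of primitive determinant-$N$ matrices and a primitivity check (which you do carry out correctly: $\gcd(a, b', c) \mid \gcd(a, 2b', c) = 1$). Both proofs are correct; the paper's is shorter, yours is more self-contained.
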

	
	\begin{proof}
		Suppose $N \in \Z_{>1}$ and $x \in \C$ are such that $F_N(x)=0$. Recall that
		\[F_N(j(z)) = \prod_{g \in C(N)} (j(z) - j(gz)).\]
		The $j$-function is surjective, so there exists $z_0 \in \h$ and $g \in \C(N)$ such that $j(z_0)=j(gz_0) = x$. Then, by Proposition~\ref{prop:eqcoordsy}, we have that $x$ is a singular modulus and $\lvert \Delta(x) \rvert \leq 4N$.
		
		For the second part, note that for $N \in \Z_{>1}$,
		\[ \begin{pmatrix}
			1 & 0\\
			0 & N
		\end{pmatrix} \in C(N).\]
		Since the $j$-function is invariant under $z \mapsto -1/z$, we have that
		\[j(\sqrt{N} i) = j\Big(\frac{1}{N} \sqrt{N} i\Big).\]
		So $F_N(j(\sqrt{N}i))=0$. Clearly, $j(\sqrt{N}i)$ is a singular modulus of discriminant $-4N$. Recall that the singular moduli of discriminant $-4N$ are all conjugate over $\Q$. Thus, every singular modulus of discriminant $-4N$ is a root of $F_N$, since $F_N$ has coefficients in $\Z$.
	\end{proof}
	
	\begin{cor}\label{cor:vanish}
		Let $N_1, \ldots, N_k \in \Z_{>1}$ be pairwise distinct. Let $b_1, \ldots, b_k \in \Z \setminus \{0\}$. Then 
		\[\prod_{i=1}^k F_{N_i}(X)^{b_i}\]
		is a non-constant rational function of $X$.
	\end{cor}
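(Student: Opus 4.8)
The plan is to show that the product $\prod_{i=1}^k F_{N_i}(X)^{b_i}$ cannot simplify to a constant, by exhibiting a root (or pole) that survives the cancellation. The key observation is Corollary~\ref{cor:rootsofF}: for $N \in \Z_{>1}$, every root $x$ of $F_N$ is a singular modulus with $\lvert \Delta(x) \rvert \leq 4N$, and conversely every singular modulus of discriminant $-4N$ is a root of $F_N$. So if I can find, among the $N_i$, some index for which $F_{N_i}$ has a root that none of the other $F_{N_j}$ share, I am done, since that root will then be a genuine zero or pole of the product (depending on the sign of the corresponding $b_i$) and in particular the product is non-constant.

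Concretely, first I would reorder so that $N_1 = \max\{N_1, \ldots, N_k\}$. By Corollary~\ref{cor:rootsofF}, $F_{N_1}$ has among its roots every singular modulus of discriminant $-4N_1$; fix one such singular modulus $x$, so $\Delta(x) = -4N_1$. Now suppose $x$ were also a root of $F_{N_j}$ for some $j \neq 1$. Then again by Corollary~\ref{cor:rootsofF}, $\lvert \Delta(x) \rvert \leq 4 N_j$, i.e. $4 N_1 \leq 4 N_j$, so $N_1 \leq N_j$; combined with the maximality of $N_1$ and the fact that the $N_i$ are pairwise distinct, this is impossible. Hence $x$ is a root of $F_{N_1}$ but of no other $F_{N_j}$. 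Therefore $X - x$ divides the numerator or denominator of $\prod_{i=1}^k F_{N_i}(X)^{b_i}$ (written in lowest terms) to the nonzero order $b_1 \cdot (\text{multiplicity of } x \text{ in } F_{N_1})$, so the rational function has a zero or pole at $x$ and is in particular non-constant.

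I don't anticipate a serious obstacle here: the only thing to be slightly careful about is that $x$ really does appear with nonzero total order, but since $b_1 \neq 0$ and $x$ occurs with positive multiplicity in $F_{N_1}$ and multiplicity zero in every other $F_{N_j}$, the total order of vanishing at $x$ is exactly $b_1$ times the multiplicity, which is nonzero. One should also note the hypothesis $N_i \in \Z_{>1}$ is used twice: once to invoke Corollary~\ref{cor:rootsofF} (which is stated for $N > 1$) and implicitly to guarantee $F_{N_1}$ is non-constant so that it has a root at all — in fact $\deg F_N \geq 2N$, as recalled in Section~\ref{subsec:families}. The argument is purely about the zero sets and their discriminant bounds, so no estimates beyond the elementary inequality $N_1 \leq N_j$ are needed.
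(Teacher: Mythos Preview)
Your proof is correct and follows essentially the same approach as the paper's: pick the largest $N_i$, use Corollary~\ref{cor:rootsofF} to find a singular modulus of discriminant $-4N_i$ that is a root of $F_{N_i}$ but of no other $F_{N_j}$, and conclude that the product has a zero or pole there. The only cosmetic difference is that the paper normalises so that the largest index is $N_k$ with $b_k>0$ (replacing the product by its reciprocal if necessary) to phrase the conclusion as ``vanishes'' rather than ``has a zero or pole''.
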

	
	\begin{proof}
		Each $F_{N_i}(X)$ is a non-constant polynomial in $X$. Hence, the rational function
		\[\prod_{i=1}^k F_{N_i}(X)^{b_i}\]
		is not constantly zero. Without loss of generality, we may assume that $N_k > N_1, \ldots, N_{k-1}$ and $b_k > 0$. By Corollary~\ref{cor:rootsofF}, the polynomial $F_{N_i}(X)$ vanishes at a singular modulus of discriminant $-4N_k$ if and only if $i=k$. Thus the rational function
		\[\prod_{i=1}^k F_{N_i}(X)^{b_i}\]
		vanishes at every singular modulus of discriminant $-4N_k$; in particular, the function is non-constant.
	\end{proof}
	
	\begin{prop}\label{prop:eqcoords}
		Suppose that $g_1, g_2 \in \MZ$ are such that $\det g_1, \det g_2 > 0$ and $g_1 \neq \lambda \gamma g_2$ for every $\lambda \in \Q^\times$ and $\gamma \in \SL$. If $z \in \h$ is such that $j(g_1 z) = j(g_2 z)$, then $j(z)$ is a singular modulus and $\lvert \Delta(j(z)) \rvert \leq 4 \det(g_1)\det(g_2)$. 
	\end{prop}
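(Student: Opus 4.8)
The plan is to deduce this from Lemma~\ref{lem:fixedpt}, in the same spirit as the deduction of Proposition~\ref{prop:eqcoordsy}, the only extra ingredient being that we must clear denominators when we invert $g_2$, which is what produces the quadratic (rather than linear) bound.

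First, from $j(g_1 z) = j(g_2 z)$ I obtain some $\gamma \in \SL$ with $g_2 z = \gamma g_1 z$. Since $\GLC$ acts on $\h$ through fractional linear transformations, $z$ is then a fixed point of $g_2^{-1} \gamma g_1$. To make this integral I replace $g_2^{-1}$ by the adjugate matrix $\widetilde{g_2} := (\det g_2)\, g_2^{-1} \in \MZ$, which acts on $\h$ in exactly the same way as $g_2^{-1}$ because $\det g_2 > 0$. Setting $h := \widetilde{g_2}\, \gamma\, g_1 \in \MZ$, I get $hz = z$, and, using $\det \widetilde{g_2} = \det g_2$ for $2\times 2$ matrices, $\det h = \det(g_2) \cdot 1 \cdot \det(g_1) = \det(g_1) \det(g_2) > 0$.

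Next I verify the nondegeneracy hypothesis of Lemma~\ref{lem:fixedpt} for $h$. If $\lambda h = \mathrm{Id}_2$ for some $\lambda \in \Q^\times$, then $\widetilde{g_2}\, \gamma\, g_1 = \lambda^{-1} \mathrm{Id}_2$, whence $\gamma g_1 = \lambda^{-1} \widetilde{g_2}^{-1} = \lambda^{-1} (\det g_2)^{-1} g_2$ and therefore $g_1 = \lambda^{-1}(\det g_2)^{-1}\, \gamma^{-1} g_2$. This has the form $g_1 = \mu\, \gamma' g_2$ with $\mu = \lambda^{-1}(\det g_2)^{-1} \in \Q^\times$ and $\gamma' = \gamma^{-1} \in \SL$, contradicting the hypothesis. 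Hence Lemma~\ref{lem:fixedpt} applies to $h$ and gives that $j(z)$ is a singular modulus with $\lvert \Delta(j(z)) \rvert \leq 4 \det h = 4 \det(g_1)\det(g_2) \leq 4 \max\{\det(g_1), \det(g_2)\}^2$.

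There is essentially no real obstacle here: the only points requiring care are keeping every matrix in $\MZ$ when inverting $g_2$ (handled by passing to the adjugate) and tracking the scalar factor $\det g_2$ so that the excluded case $\lambda h = \mathrm{Id}_2$ corresponds precisely to the excluded case $g_1 = \lambda \gamma g_2$ in the hypothesis. The bound $4\det(g_1)\det(g_2)$ — and hence the appearance of the square of the maximum — is forced by the product of determinants coming from $h = \widetilde{g_2}\,\gamma\, g_1$.
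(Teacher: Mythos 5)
Your proof is correct and follows essentially the same route as the paper: obtain $\gamma\in\SL$ with $\gamma g_1 z=g_2 z$, clear denominators by scaling $g_2^{-1}\gamma g_1$ by $\det g_2$ (the adjugate), and apply Lemma~\ref{lem:fixedpt}. Your write-up is slightly more explicit in verifying the nondegeneracy hypothesis and records the sharper intermediate bound $4\det(g_1)\det(g_2)$, but the argument is the same.
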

	
	\begin{proof}
		Since $j(g_1 z) = j(g_2 z)$, there exists $\gamma \in \SL$ such that $\gamma g_1 z = g_2 z$. Hence, $z$ is a fixed point for the action of $g = g_2^{-1} \gamma g_1 \in \GL$ on $\h$. Multiplying the entries of $g$ by $\det(g_2)$, we may assume that $g \in \MZ$ and $\det g \leq \det(g_1) \det(g_2)$. Since $g_1 \neq \lambda \gamma' g_2$ for every $\lambda \in \Q^\times$ and $\gamma' \in \SL$, $g$ is not a rational scalar multiple of $\mathrm{Id}_2$. The desired result thus follows from Lemma~\ref{lem:fixedpt}.
	\end{proof}
	
	\begin{prop}\label{prop:galbd}
		For every $\epsilon > 0$, there exist an ineffective constant $c_1(\epsilon)>0$ and an effective constant $c_2(\epsilon)>0$, such that if $x$ is a singular modulus of discriminant $\Delta$, then
		\[ [\Q(x) : \Q] \geq c_1(\epsilon) \lvert \Delta \rvert^{1/2 - \epsilon}\]
		and
		\[ [\Q(x) : \Q] \leq c_2(\epsilon) \lvert \Delta \rvert^{1/2 + \epsilon}.\]
	\end{prop}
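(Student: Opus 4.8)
The plan is to identify $[\Q(x):\Q]$ with a class number and then apply two classical analytic inputs. By \cite[Proposition~13.2]{Cox89} (already used in Section~\ref{subsec:singmods}), the singular moduli of discriminant $\Delta$ form a complete set of conjugates over $\Q$, and their number equals $\lvert T_\Delta \rvert$, i.e.\ the class number $h(\Delta)$ of the order $\ord_\Delta$ of discriminant $\Delta$ in $\Q(\sqrt{\Delta})$. So it suffices to prove $c_1(\epsilon)\lvert\Delta\rvert^{1/2-\epsilon} \leq h(\Delta) \leq c_2(\epsilon)\lvert\Delta\rvert^{1/2+\epsilon}$, with $c_2(\epsilon)$ effective and $c_1(\epsilon)$ allowed to be ineffective.

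First, I would reduce to fundamental discriminants. Writing $\Delta = f^2 \Delta_{\mathrm{K}}$ with $\Delta_{\mathrm{K}}$ a fundamental discriminant and $f \in \Z_{>0}$ the conductor, the classical formula \cite[Theorem~7.24]{Cox89}
\[
h(\Delta) = \frac{h(\Delta_{\mathrm{K}})\, f}{[\ord_{\Delta_{\mathrm{K}}}^\times : \ord_\Delta^\times]} \prod_{p \mid f} \Bigl(1 - \Bigl(\tfrac{\Delta_{\mathrm{K}}}{p}\Bigr)\tfrac{1}{p}\Bigr),
\]
together with $1 \leq [\ord_{\Delta_{\mathrm{K}}}^\times : \ord_\Delta^\times] \leq 3$ and the fact that the Euler product lies in $[\phi(f)/f,\, f/\phi(f)]$ while $f/\phi(f) \ll \log\log f$ effectively, shows that $h(\Delta)$ and $h(\Delta_{\mathrm{K}})\,f$ agree up to a factor which is $\ll_\epsilon \lvert\Delta\rvert^{\epsilon}$ and $\gg_\epsilon \lvert\Delta\rvert^{-\epsilon}$, effectively. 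Since $\lvert\Delta\rvert^{1/2} = f\,\lvert\Delta_{\mathrm{K}}\rvert^{1/2}$ and $f, \lvert\Delta_{\mathrm{K}}\rvert \leq \lvert\Delta\rvert$, it suffices to prove the two bounds for $h(\Delta_{\mathrm{K}})$ with $\Delta_{\mathrm{K}}$ fundamental (shrinking $\epsilon$ at the outset).

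For fundamental $\Delta_{\mathrm{K}} < -4$, Dirichlet's class number formula gives $h(\Delta_{\mathrm{K}}) = \frac{\sqrt{\lvert\Delta_{\mathrm{K}}\rvert}}{\pi} L(1, \chi_{\Delta_{\mathrm{K}}})$, where $\chi_{\Delta_{\mathrm{K}}} = (\tfrac{\Delta_{\mathrm{K}}}{\cdot})$ is the Kronecker symbol, a real primitive character modulo $\lvert\Delta_{\mathrm{K}}\rvert$; the two cases $\Delta_{\mathrm{K}} \in \{-3,-4\}$ are absorbed into the constants. For the upper bound, partial summation together with $\lvert\sum_{n\leq N}\chi_{\Delta_{\mathrm{K}}}(n)\rvert \leq \min(N, \lvert\Delta_{\mathrm{K}}\rvert)$ gives $L(1,\chi_{\Delta_{\mathrm{K}}}) \ll \log\lvert\Delta_{\mathrm{K}}\rvert$ with an effective constant, whence $h(\Delta_{\mathrm{K}}) \ll \lvert\Delta_{\mathrm{K}}\rvert^{1/2}\log\lvert\Delta_{\mathrm{K}}\rvert \ll_\epsilon \lvert\Delta_{\mathrm{K}}\rvert^{1/2+\epsilon}$, effectively. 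For the lower bound I would invoke Siegel's theorem: for every $\epsilon > 0$ there is an ineffective $c(\epsilon) > 0$ with $L(1,\chi_{\Delta_{\mathrm{K}}}) \geq c(\epsilon)\lvert\Delta_{\mathrm{K}}\rvert^{-\epsilon}$, so $h(\Delta_{\mathrm{K}}) \gg_\epsilon \lvert\Delta_{\mathrm{K}}\rvert^{1/2-\epsilon}$ ineffectively.

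Combining these (after replacing $\epsilon$ by, say, $\epsilon/3$ initially to absorb the logarithmic and $f^{\pm\epsilon}$ factors) gives the proposition. There is no conceptual obstacle: the two nontrivial ingredients, Dirichlet's class number formula and Siegel's theorem, are standard, and the rest is bookkeeping. The only points requiring care are handling non-fundamental discriminants so that the conductor's Euler product and the unit index do not degrade the exponent $\tfrac12 \pm \epsilon$, and tracking effectivity: ineffectivity enters only through Siegel's theorem, leaving the upper bound and its constant $c_2(\epsilon)$ effective, while $c_1(\epsilon)$ is not.
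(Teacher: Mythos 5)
Your argument is correct, and it is exactly the standard derivation underlying the results the paper cites: the paper's proof simply references Siegel~\cite{Siegel35} for the ineffective lower bound and Paulin~\cite[Proposition~2.2]{Paulin16} for the effective upper bound, whereas you unpack both. The route is the expected one: identify $[\Q(x):\Q]$ with the class number $h(\Delta)$ via \cite[Proposition~13.2]{Cox89}, reduce to fundamental discriminants via the conductor formula \cite[Theorem~7.24]{Cox89} (with the Euler product and unit index absorbing into $|\Delta|^{\pm\epsilon}$ losses), and then apply Dirichlet's class number formula together with the elementary effective bound $L(1,\chi) \ll \log|\Delta_K|$ and Siegel's ineffective lower bound $L(1,\chi) \gg_\epsilon |\Delta_K|^{-\epsilon}$. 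The bookkeeping in your reduction is sound; in particular the Euler product bound $\phi(f)/f \leq \prod_{p\mid f}(1 - (\tfrac{\Delta_K}{p})\tfrac1p) \leq f/\phi(f) \ll \log\log f$ and the identity $|\Delta|^{1/2} = f|\Delta_K|^{1/2}$ make the exponent $\tfrac12 \pm \epsilon$ survive after shrinking $\epsilon$. Effectivity is tracked correctly: only Siegel's theorem introduces ineffectivity, so $c_2(\epsilon)$ remains effective while $c_1(\epsilon)$ does not.
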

	
	\begin{proof}
		The ineffective lower bound is due to Siegel \cite{Siegel35}. The upper bound is \cite[Proposition~2.2]{Paulin16}.
	\end{proof}
	
	Since the restriction $j \colon \mathfrak{F}_j \to \C$ of the $j$-function to the fundamental domain is bijective, the map
	\[ (a, b, c) \mapsto j\Big(\frac{-b + \lvert \Delta \rvert^{1/2} i}{2a}\Big)\]
	is a bijection between the set 
	\begin{align*}
		T_\Delta = \Big\{(a, b, c) \in \Z^3: \, &\Delta = b^2 -4ac, \, \gcd(a,b,c) = 1,\\ &\mbox{ and either } -a<  b \leq a <c \mbox{ or } 0 \leq b \leq a =c\Big\}
	\end{align*}
	and the singular moduli of discriminant $\Delta$. Observe that, for each discriminant $\Delta$, there is a unique triple $(a,b,c) \in T_\Delta$ with $a=1$. This triple is given by $(1, k, (k^2 - \Delta)/4)$, where $k=0$ if $\Delta$ is even and $k=1$ if $\Delta$ is odd. The corresponding singular modulus has preimage 
	\[ \frac{-k + \lvert \Delta \rvert^{1/2} i}{2} \in \mathfrak{F}_j,\]
	which has imaginary part strictly greater than the preimage of any other singular modulus of discriminant $\Delta$ and of any singular modulus of discriminant $\Delta'$ with $\lvert \Delta' \rvert < \lvert \Delta \rvert$.
	
	For $\alpha \in \alg$, write $H(\alpha)$ for the absolute multiplicative height of $\alpha$ and $h(\alpha)$ for the absolute logarithmic height (see e.g.~\cite[\S1.5]{BombieriGubler06}).
	
	\begin{prop}\label{prop:htpresingmod}
		Let $x$ be a singular modulus of discriminant $\Delta$. Let $\tau \in \mathfrak{F}_j$ be such that $j(\tau) = x$. Then
		\[H(\re \tau) \leq \frac{2 \lvert \Delta \rvert^{1/2}}{\sqrt{3}} \mbox{ and } H(\im \tau) \leq \frac{4 \lvert \Delta \rvert}{3}.\]
	\end{prop}
	
	\begin{proof}
		By the above characterisation of the singular moduli of discriminant $\Delta$, we have that
		\[ \tau = \frac{-b + \lvert \Delta \rvert^{1/2}i}{2a}\]
		for some $(a, b, c) \in \Z^3$ with $0 \leq \lvert b \rvert \leq a \leq c$ and $b^2 - 4 ac = \Delta$. Hence, by e.g. \cite[Propositions~1.6.5 and 1.6.6]{BombieriGubler06},
		\[ H(\re \tau) = \max\{\lvert b \rvert, 2 a\} = 2a \mbox{ and } H(\im \tau) = \max \{ \lvert \Delta \rvert, 4 a^2\}.\]
		The desired inequality follows, since $3a^2 \leq 4ac - b^2 = \lvert \Delta \rvert$. Finally, observe that if $\Delta = - 3$, then $\tau = (-1 + \sqrt{3}i)/2$ and both bounds are achieved.
	\end{proof}

	\begin{prop}[{\cite[Lemma~4.3]{HabeggerPila12}}]\label{prop:htsingmod}
		For every $\epsilon > 0$, there exists an ineffective constant $c(\epsilon)>0$ such that if $x$ is a singular modulus of discriminant $\Delta$, then
		\[ h(x) \leq c(\epsilon) \lvert \Delta \rvert^{\epsilon}.\]
	\end{prop}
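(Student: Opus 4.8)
The plan is to use the Mahler-measure expression for the absolute logarithmic height of an algebraic integer in terms of its complex conjugates, to bound those conjugates using the $q$-expansion of $j$, and then to invoke Siegel's ineffective lower bound on $[\Q(x):\Q]$ to extract the saving. Since every singular modulus is an algebraic integer, the minimal polynomial of $x$ over $\Q$ is monic with integer coefficients, and its roots are exactly the singular moduli of discriminant $\Delta$. By the bijection between $T_\Delta$ and the singular moduli of discriminant $\Delta$ recalled above, these roots are the numbers $j(\tau_{a,b,c})$ with $\tau_{a,b,c} = \frac{-b + \lvert \Delta \rvert^{1/2} i}{2a}$ for $(a,b,c) \in T_\Delta$, all of which lie in $\mathfrak{F}_j$. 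Writing $d = [\Q(x):\Q] = \lvert T_\Delta \rvert$ and $\log^+ t = \log\max\{1,t\}$, the relation between height and Mahler measure (see e.g.\ \cite[\S1.5]{BombieriGubler06}) gives
\[ h(x) = \frac{1}{d}\sum_{(a,b,c) \in T_\Delta} \log^+ \lvert j(\tau_{a,b,c}) \rvert. \]

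Next I would bound each term. For $(a,b,c) \in T_\Delta$ one has $\im \tau_{a,b,c} = \lvert \Delta \rvert^{1/2}/(2a)$, and since $\tau_{a,b,c} \in \mathfrak{F}_j$ also $\im \tau_{a,b,c} \geq \sqrt{3}/2$, so $\lvert q \rvert = e^{-2\pi \im \tau_{a,b,c}} \leq e^{-\pi\sqrt{3}}$ where $q = e^{2\pi i \tau_{a,b,c}}$. Substituting into $j(\tau) = q^{-1} + 744 + \sum_{n \geq 1} c(n) q^n$ and using any crude subexponential bound on the integers $c(n)$ (for instance $\lvert c(n) \rvert \leq e^{C\sqrt n}$, which makes $\sum_{n\geq 1}\lvert c(n)\rvert \lvert q \rvert^n$ bounded by an absolute constant whenever $\lvert q \rvert \leq e^{-\pi\sqrt{3}}$), one obtains $\lvert j(\tau_{a,b,c})\rvert \leq e^{2\pi \im \tau_{a,b,c}} + C'$ and hence
\[ \log^+ \lvert j(\tau_{a,b,c}) \rvert \leq \frac{\pi \lvert \Delta \rvert^{1/2}}{a} + C'' \]
for absolute constants $C', C'' > 0$.

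Now I would sum over $T_\Delta$. Every $(a,b,c) \in T_\Delta$ satisfies $a \leq (\lvert \Delta \rvert/3)^{1/2}$ (from $\lvert \Delta \rvert = 4ac - b^2 \geq 3a^2$ using $\lvert b \rvert \le a \le c$), and for a fixed first coordinate $a$ the triples of $T_\Delta$ are indexed by a subset of the residues $b \pmod{2a}$ with $b^2 \equiv \Delta \pmod{4a}$; the number of such residues is $O_\epsilon(a^\epsilon)$ by the divisor bound, so writing $r_\Delta(a)$ for this count one has $\sum_{a \leq X} r_\Delta(a)/a = O_\epsilon(X^\epsilon)$. Therefore
\[ h(x) \leq C'' + \frac{\pi \lvert \Delta \rvert^{1/2}}{d} \sum_{a \leq (\lvert \Delta \rvert/3)^{1/2}} \frac{r_\Delta(a)}{a} = C'' + O_\epsilon\!\left(\frac{\lvert \Delta \rvert^{1/2 + \epsilon}}{d}\right). \]
By Proposition~\ref{prop:galbd} (Siegel's theorem), $d = [\Q(x):\Q] \geq c_1(\epsilon) \lvert \Delta \rvert^{1/2 - \epsilon}$, whence $h(x) = O_\epsilon(\lvert \Delta \rvert^{2\epsilon})$; replacing $\epsilon$ by $\epsilon/2$ throughout gives the claimed bound. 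Ineffectivity enters only through Siegel's lower bound, which is precisely why the constant $c(\epsilon)$ is ineffective.

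I expect the main obstacle to be extracting the saving in this last step: the trivial count ``at most $2a$ triples of $T_\Delta$ with first coordinate $a$'' only yields $h(x) = O_\epsilon(\lvert \Delta \rvert^{1/2 + \epsilon})$, so one genuinely needs the divisor-type bound on the number of solutions of $b^2 \equiv \Delta \pmod{4a}$ — equivalently, the fact that the overwhelming majority of reduced forms of discriminant $\Delta$ have a large leading coefficient, so that the corresponding conjugates of $x$ have small absolute value. By contrast, the analytic input (the uniform estimate $\log^+ \lvert j(\tau) \rvert \leq 2\pi \im \tau + O(1)$ on $\mathfrak{F}_j$) is routine once one has any polynomial-in-$\sqrt n$ bound on the Fourier coefficients of $j$, and all the number-theoretic facts required about conjugates of singular moduli have already been recorded in Section~\ref{sec:prelim}.
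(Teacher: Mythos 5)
The paper proves this proposition purely by citation to \cite[Lemma~4.3]{HabeggerPila12} and gives no argument, so there is no in-paper proof to compare against; your reconstruction is the standard argument and, as far as I can tell, matches what the cited reference does. The structure is exactly right: the Mahler-measure formula for the height of an algebraic integer (valid because the singular moduli of discriminant $\Delta$ form a complete set of conjugates), the estimate $\log^+\lvert j(\tau)\rvert\le 2\pi\im\tau+O(1)$ on $\mathfrak{F}_j$ from the $q$-expansion, the identity $\im\tau_{a,b,c}=\lvert\Delta\rvert^{1/2}/(2a)$, a divisor-type bound to control $\sum_a r_\Delta(a)/a$, and finally Siegel's ineffective lower bound on the class number. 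One small caveat: the stated per-$a$ bound $r_\Delta(a)=O_\epsilon(a^\epsilon)$, phrased as a count of square roots of $\Delta$ modulo $4a$, is not literally correct without extra care when $\gcd(a,\Delta)$ has large square factors, so one should either invoke primitivity and reducedness of the forms to trim the count, or (cleaner) bound the cumulative count $R(A)=\#\{(a,b,c)\in T_\Delta:a\le A\}\ll_\epsilon A\lvert\Delta\rvert^\epsilon$ by applying the divisor bound to $b^2-\Delta$ and then sum by parts. Either route gives $\sum_a r_\Delta(a)/a=O_\epsilon(\lvert\Delta\rvert^\epsilon)$, which is what you need, and the rest of your argument is correct; you have also correctly identified the divisor-type input and Siegel's theorem as the two essential ingredients, with the latter as the sole source of ineffectivity.
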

	
	\subsection{Properties of $j$-maps}\label{sec:jmap}
	
	Let $f$ be a non-constant $j$-map. Then \cite[Proposition~7.1]{BiluLucaMasser17} there exist $r, s \in \Q$ such that $r>0$ and $0 \leq s < 1$ such that $f(z) = j(rz +s)$ for all $z \in \h$. Two non-constant $j$-maps are equal if and only if the corresponding pairs $(r, s)$ are equal.
	
	Recall that, for $N \in \Z_{>0}$, we define
	\[ C(N) = \Big \{ \begin{pmatrix}
		a & b\\
		0 & d
	\end{pmatrix} \in \MZ : ad=N, a>0, 0 \leq b < d, \gcd(a, b, d) = 1 \Big \}.\] 
	
	\begin{prop}\label{prop:unique}
		Let $g \in \GL$. Then there exist a unique $N \in \Z_{>0}$ and a unique $g' \in C(N)$ such that $j(g z) = j(g' z)$ for all $z \in \h$.
	\end{prop}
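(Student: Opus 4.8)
The plan is to reduce everything to the standard action of $\SLR$ and the Hermite (or column-echelon) normal form for integer matrices with positive determinant. First I would use the fact, recalled just before the statement, that a non-constant $j$-map is given by a pair $(r,s)$ with $r \in \Q_{>0}$, $s \in \Q$, $0 \le s < 1$, and that such a pair is uniquely determined by the $j$-map. Given $g \in \GL$, the composition $z \mapsto j(gz)$ is a $j$-map (it is clearly non-constant, since $g$ acts by a Möbius transformation), so there is a unique such pair $(r,s)$ with $j(gz) = j(rz+s)$ for all $z$. I now need to match $(r,s)$ to a unique $C(N)$.

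The key computation is: for an upper-triangular integer matrix $g' = \left(\begin{smallmatrix} a & b \\ 0 & d \end{smallmatrix}\right)$ with $a,d>0$, we have $g'z = \frac{az+b}{d} = \frac{a}{d}z + \frac{b}{d}$, so $j(g'z) = j(rz+s)$ precisely when $r = a/d$ and $s = b/d$ modulo the freedom to translate by integers (which is absorbed by the normalisation $0 \le s < 1$, i.e. $0 \le b < d$). Thus a matrix in some $C(N)$ realising the pair $(r,s)$ must have $a/d = r$, $b/d = s$, with $\gcd(a,b,d)=1$; writing $r = p/q$ in lowest terms, this forces $d = q/\gcd(q, \text{numerator of }s\text{ scaled})$... more cleanly: the condition $a/d = r$, $b/d = s$, $\gcd(a,b,d)=1$, $a,d > 0$, $0 \le b < d$ has exactly one solution $(a,b,d)$, because clearing denominators of $r$ and $s$ over a common denominator and then dividing out the gcd of the three resulting integers gives the unique primitive representative. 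Then $N := ad$ is determined, and $g' \in C(N)$ is determined. This gives both existence and uniqueness of the pair $(N, g')$ among upper-triangular normalised matrices.

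The remaining point — the only place any genuine argument is needed — is \emph{existence}: that the pair $(r,s)$ attached to $j(gz)$ can in fact be realised by an integer upper-triangular matrix at all, equivalently that $r \in \Q_{>0}$ and $s \in \Q$ (which we already know), together with showing this is consistent with $g \in \GL$. For this I would argue directly: given $g \in \GL$, after scaling by a positive rational we may assume $g \in \MZ$ with $\det g = N_0 > 0$; by the theory of the Smith/Hermite normal form, there exists $\gamma \in \SL$ such that $\gamma g$ is upper-triangular with positive diagonal entries, and then translating by a further power of $T = \left(\begin{smallmatrix} 1 & 1 \\ 0 & 1\end{smallmatrix}\right)$ on the left puts the top-right entry into the range $0 \le b < d$; finally dividing the three entries by their gcd produces $g' = \left(\begin{smallmatrix} a & b \\ 0 & d\end{smallmatrix}\right) \in C(N)$ with $N = ad$, and since $\gamma g$ and $g$ differ by an element of $\SL$ (and scaling does not affect the Möbius action), $j(gz) = j(\gamma g z) = j(g'z)$ for all $z$. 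Combined with the uniqueness argument of the previous paragraph (two different pairs $(N, g')$, $(M, g'')$ in $\bigcup_M C(M)$ realising the same $j$-map would give the same $(r,s)$, hence the same primitive triple, hence $N=M$ and $g'=g''$), this proves the proposition. I expect the main obstacle to be purely bookkeeping: carefully tracking that the normalisations $a>0$, $0 \le b < d$, $\gcd(a,b,d)=1$ pin down the triple uniquely, and that the left-multiplication by $\SL$ used to reach echelon form is exactly the ambiguity that disappears upon applying $j$.
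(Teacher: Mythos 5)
Your proof is correct and follows essentially the same route as the paper: both reduce to the uniqueness of the pair $(r,s) \in \Q_{>0} \times ([0,1) \cap \Q)$ parametrising the $j$-map $z \mapsto j(gz)$, and then observe that the normalisations $a,d>0$, $0 \le b < d$, $\gcd(a,b,d)=1$ pin down a unique primitive integer triple. Your existence detour through Hermite normal form is more work than necessary — since $r,s \in \Q$ is already given by the cited parametrisation, the paper simply scales $\bigl(\begin{smallmatrix} r & s \\ 0 & 1\end{smallmatrix}\bigr)$ by a positive rational to clear denominators — but it lands in the same place.
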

	
	\begin{proof}
		Let $g \in \GL$. Then there exist $r, s \in \Q$ with $r>0$ and $0 \leq s < 1$ such that $f(z) = j(rz +s)$ for all $z \in \h$. Further, the pair $(r, s)$ is unique. Let $\lambda \in \Q_{>0}$ be such that
		\[ \lambda \begin{pmatrix}
			r & s\\
			0 & 1
		\end{pmatrix} = \begin{pmatrix}
			a & b\\
			0 & d
		\end{pmatrix}
		\]
		for some $a, b, d \in \Z$ with $\gcd(a, b, d) =1$. Since $0 \leq s < 1$, we have that $0 \leq b < d$. Let $N = ad$. Then 
		\[ \begin{pmatrix}
			a & b\\
			0 & d
		\end{pmatrix} \in C(N),\]
		and
		\[j(gz) = j\Big(\begin{pmatrix}
			a & b\\
			0 & d
		\end{pmatrix} z\Big)\]
		for all $z \in \h$.
		
		Suppose 
		\[
		\begin{pmatrix}
			a' & b'\\
			0 & d'
		\end{pmatrix} \in C(M)
		\]
		were also such that 
		\[j\Big(\frac{a}{d} z+ \frac{b}{d}\Big) = j\Big(\frac{a'}{d'} z+ \frac{b'}{d'}\Big)\]
		for all $z \in \h$. Then, by the uniqueness of the representation of a $j$-map in terms of $r$ and $s$, we would have that
		\[ \frac{a}{d} = \frac{a'}{d'} \mbox{ and } \frac{b}{d} = \frac{b'}{d'}. \]
		Hence,
		\[\frac{a}{a'}  = \frac{d}{d'},\]
		and either $b= b' = 0$ or
		\[\frac{b}{b'} = \frac{d}{d'}.\]
		So one matrix is just the rescaling of the other. Since the entries of each of the two matrices are coprime integers and $a, a'>0$, the two matrices are in fact identical and $M=N$.
	\end{proof}
	
	\begin{prop}\label{prop:jmaps}
		Let $N \in \Z_{>0}$.
		\begin{enumerate}
			\item For every $\gamma \in \SL$ and $g \in C(N)$, there exists $h \in C(N)$ such that $j(g \gamma z) = j(h z)$.
			\item For every $\gamma \in \SL$ and $g, h \in C(N)$, if $g \neq h$, then $j(g \gamma z) \neq j(h \gamma z)$.
			\item For every $g, h \in C(N)$, there exists $\gamma \in \SL$ such that $j(g \gamma z) = j(h z)$.
		\end{enumerate}
	\end{prop}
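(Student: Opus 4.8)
The plan is to reduce all three parts to elementary facts about the action of $\SL$ on the set
\[ X_N = \{M \in \MZ : \det M = N,\ \gcd(\text{entries of }M) = 1\} \]
of primitive integer matrices of determinant $N$, combined with Proposition~\ref{prop:unique}. Two observations will be used throughout: first, $j(\gamma M z) = j(Mz)$ for all $z \in \h$ whenever $\gamma \in \SL$ and $M \in \GL$, since $j$ is $\SL$-invariant; second, if $\gamma \in \SL$ and $M \in X_N$ then $\gamma M, M\gamma \in X_N$, because $\gamma$ and $\gamma^{-1}$ are integer matrices of determinant $1$, so multiplication by them preserves both the determinant and the gcd of the entries. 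Note also $C(N) \subseteq X_N$. (All of this is classical; cf.\ \cite[\S11]{Cox89} for the underlying theory of $C(N)$.)

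For part (1), note that $g\gamma \in X_N$. An elementary Hermite-reduction argument — use a matrix $\begin{pmatrix} u & v\\ \ast & \ast\end{pmatrix} \in \SL$ supplied by B\'ezout's lemma to clear the lower-left entry of $g\gamma$, then a translation $\begin{pmatrix} 1 & k \\ 0 & 1\end{pmatrix}$ to normalise the upper-right entry — produces $\gamma' \in \SL$ with $\gamma' g \gamma = h$, where $h = \begin{pmatrix} a & b \\ 0 & d\end{pmatrix}$ satisfies $a, d > 0$ (forced by $\det = N > 0$), $ad = N$, $0 \le b < d$, and, since the gcd of the entries is preserved, $\gcd(a,b,d) = 1$; thus $h \in C(N)$, and $j(g\gamma z) = j((\gamma')^{-1} h z) = j(hz)$ for all $z$. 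For part (2), suppose for contradiction that $g \ne h$ in $C(N)$ but $j(g\gamma z) = j(h\gamma z)$ for all $z$. Since $g\gamma, h\gamma \in X_N$, the reduction of part~(1) gives $\gamma_1, \gamma_2 \in \SL$ and $g', h' \in C(N)$ with $g\gamma = \gamma_1 g'$, $h\gamma = \gamma_2 h'$; then $j(g'z) = j(g\gamma z) = j(h\gamma z) = j(h'z)$, so $g' = h'$ by the uniqueness in Proposition~\ref{prop:unique}. Hence $g h^{-1} = (g\gamma)(h\gamma)^{-1} = \gamma_1 \gamma_2^{-1} \in \SL$, so $j(gz) = j(hz)$ for all $z$, and a second appeal to Proposition~\ref{prop:unique} forces $g = h$, a contradiction.

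Part~(3) is the crux, and the key input is the Smith normal form over $\Z$. Given $g, h \in C(N)$, both are primitive of determinant $N$, so there are $P, Q, P', Q' \in \GLZ$ with $PgQ = P'hQ' = \mathrm{diag}(1, N)$ — the smaller elementary divisor is $1$ by primitivity and the two multiply to $N$ — and comparing determinants gives $\det(PQ) = \det(P'Q') = 1$. Eliminating the diagonal matrix yields $g = UhV$ with $U = P^{-1}P'$, $V = Q'Q^{-1}$ and $\det U = \det V =: \epsilon \in \{1,-1\}$. If $\epsilon = 1$ then $U, V \in \SL$. If $\epsilon = -1$, one absorbs the sign using the identity $W\,\mathrm{diag}(1,N)\,W = \mathrm{diag}(1,N)$ for $W = \mathrm{diag}(-1,1)$: with $\tilde U = (P')^{-1}WP'$ and $\tilde V = Q'W(Q')^{-1}$ one checks $\tilde U h \tilde V = h$ and $\det \tilde U = \det \tilde V = -1$, so replacing $(U,V)$ by $(U\tilde U^{-1}, \tilde V^{-1}V)$ reduces to the case $\epsilon = 1$. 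Either way $g = UhV$ with $U, V \in \SL$, whence $g V^{-1} = Uh$ and so $j(gV^{-1}z) = j(Uhz) = j(hz)$ for all $z$; thus $\gamma = V^{-1}$ has the required property.

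I expect the $\SL$-versus-$\GLZ$ sign bookkeeping in the Hermite and Smith reductions — in particular the sign-absorption step in part~(3) — to be the only point requiring care; everything else is a formal consequence of Proposition~\ref{prop:unique} and the $\SL$-invariance of $j$.
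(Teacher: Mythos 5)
Your proof is correct. For part~(1) the argument is the same as the paper's (left-$\SL$ Hermite reduction of $g\gamma$ to an element of $C(N)$). For part~(2) you reach the right conclusion but more circuitously than the paper, which simply substitutes $w=\gamma z$ (the action of $\gamma\in\SL$ is a bijection of $\h$), so that $j(g\gamma z)=j(h\gamma z)$ for all $z$ is literally the same statement as $j(gw)=j(hw)$ for all $w$, whereupon Proposition~\ref{prop:unique} closes the argument in one step; your detour through part~(1) twice and the deduction $gh^{-1}\in\SL$ is valid but does extra work. For part~(3), the crux, you take a genuinely different route: you observe that every primitive integer matrix of determinant $N$ has Smith normal form $\mathrm{diag}(1,N)$ over $\Z$, giving $g=UhV$ with $U,V\in\GLZ$ of equal determinant $\epsilon$, and then repair a possible $\epsilon=-1$ by conjugating $W=\mathrm{diag}(-1,1)$ through the Smith data of $h$. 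The paper instead invokes \cite[Lemma~11.11]{Cox89}, namely that $g\mapsto\sigma_N^{-1}\SL g\cap\SL$ is a bijection from $C(N)$ onto the right cosets of $\Gamma_0(N)$ in $\SL$; nonemptiness of these intersections produces the required $\gamma$ after a short manipulation. Both routes express the same underlying fact --- the two-sided $\SL$-action on primitive integer matrices of determinant $N$ is transitive --- but yours proves it from scratch via Smith form plus sign bookkeeping (which you have handled correctly), whereas the paper's is shorter at the cost of citing the $\Gamma_0(N)$-coset description from Cox.
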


	\begin{proof}
		Let $\gamma \in \SL$ and $g \in C(N)$. The entries of $g$ are coprime integers and $\det g = N$. Since $\gamma \in \SL$, the entries of $g \gamma$ are coprime integers and $\det g \gamma = N$. Write $g \gamma$ as 
		\[ \begin{pmatrix}
			a & b\\
			c & d
		\end{pmatrix}\]
		for $a,b,c,d \in \Z$ with $\gcd(a,b,c,d)=1$ and $N = ad-bc$. Let $\mu = \gcd(a, c)$ and $m, n \in \Z$ be such that $\mu = ma + nc$. Then
		\[\underbrace{\begin{pmatrix}
				m & n\\
				-c/\mu & a/\mu
		\end{pmatrix}}_{\in \SL}
		\begin{pmatrix}
			a & b\\
			c & d
		\end{pmatrix} =
		\begin{pmatrix}
			\mu & mb + nd\\
			0 & -bc/\mu+ad/\mu
		\end{pmatrix},
		\]
		which is upper triangular with coprime integer entries and has determinant $N$. 
		
		Let $p=mb+nd$ and $q = N/\mu$. Then
		\[j(g \gamma z) = j\Big(\begin{pmatrix}
			a & b\\
			c & d
		\end{pmatrix}z\Big) = j\Big(\begin{pmatrix}
			\mu & p\\
			0 & q
		\end{pmatrix} z\Big) = j\Big(\begin{pmatrix}
			\mu & p + kq\\
			0 & q
		\end{pmatrix}z\Big)\]
		for all $z \in \h$, where $k \in \Z$ is the unique integer such that $0 \leq p + kq < q$. This proves (1), since
		\[ \begin{pmatrix}
			\mu & p + kq\\
			0 & q
		\end{pmatrix} \in C(N).\]
		
		Given Proposition~\ref{prop:unique}, (2) follows immediately by making the change of variables $w = \gamma^{-1} z$.

		Now we prove (3). Let 
		\[ \Gamma_0(N) = \Big\{ \begin{pmatrix}
			a & b\\
			c& d
		\end{pmatrix} \in \SL : c \equiv 0 \bmod N\Big\}.\] 
		Let 
		\[ \sigma_{N} = \begin{pmatrix}
			N & 0\\
			0 & 1
		\end{pmatrix}.\]
		Observe that $\sigma_N \in C(N)$. By \cite[Lemma~11.11]{Cox89}, the map 
		\[ g \mapsto \sigma_{N}^{-1} \SL g \cap \SL \]
		gives a bijection between the elements $g \in C(N)$ and the right cosets of $\Gamma_0(N)$ in $\SL$. In particular, for every $g \in C(N)$, the set $\sigma_{N}^{-1} \SL g \cap \SL$ is non-empty.
		
		Let $g, h \in C(N)$. Let $\gamma_1 \in \sigma_{N}^{-1} \SL h \cap \SL$ and $\gamma_2 \in \sigma_{N}^{-1} \SL g \cap \SL$. Let $\gamma_{1,1}, \gamma_{2, 1} \in \SL$ be such that 
		\[ \gamma_{1} = \sigma_{N}^{-1} \gamma_{1, 1} h\]  
		and
		\[ \gamma_{2} = \sigma_{N}^{-1} \gamma_{2, 1} g.\] 
		Then
		\[j(g \gamma_2^{-1} \gamma_1 z) = j(g g^{-1} \gamma_{2, 1}^{-1} \sigma_{N} \sigma_{N}^{-1} \gamma_{1,1} h z)
		=j(\gamma_{2,1}^{-1} \gamma_{1,1} h z)
		=j(h z)\]
		for all $z \in \h$. So we may take $\gamma =  \gamma_2^{-1} \gamma_1$ in (3).
	\end{proof}
	
	\section{Completing the proof of Theorem~\ref{thm:fixedy}}\label{sec:fixedy}
	
	\begin{definition}\label{def:multdep}
		Let $n \in \Z_{>0}$ and $x_1, \ldots, x_n \in \C^\times$ be pairwise distinct. The set $\{x_1, \ldots, x_n\}$ is multiplicatively dependent if there exist $a_1, \ldots, a_n \in \Z$, not all zero, such that
		\[ \prod_{i=1}^n x_i^{a_i} = 1.\]
		The set $\{x_1, \ldots, x_n\}$ is minimally multiplicatively dependent if $\{x_1, \ldots, x_n\}$ is multiplicatively dependent and no non-empty proper subset of $\{x_1, \ldots, x_n\}$ is multiplicatively dependent.
	\end{definition}

	\begin{thm}\label{thm:fixedygd}
		Let $y \in \C$ be such that $y \notin (0, 1728)$. Let $n \in \Z_{>0}$. Then there exist only finitely many $n$-tuples $(x_1, \ldots, x_n)$ of singular moduli $x_1, \ldots, x_n$ such that $x_1, \ldots, x_n, y$ are pairwise distinct and $\{x_1-y, \ldots, x_n-y\}$ is minimally multiplicatively dependent. 
	\end{thm}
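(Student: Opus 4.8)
The plan is to bound $\max_i\lvert\Delta(x_i)\rvert$ by a quantity depending only on $n$ and $y$; since there are only finitely many singular moduli of bounded discriminant, this yields the finiteness. Throughout write $\Delta_i=\Delta(x_i)$.

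\textbf{Reduction to $y$ algebraic.} If $y$ is transcendental, then no nontrivial relation $\prod_i(x_i-y)^{a_i}=1$ can hold at all: writing it as $\prod_{a_i>0}(x_i-y)^{a_i}=\prod_{a_i<0}(x_i-y)^{-a_i}$ and regarding both sides as elements of $\alg[T]$ evaluated at $T=y$, these are polynomials with \emph{disjoint} root sets $\{x_i:a_i>0\}$ and $\{x_i:a_i<0\}$, and since $y$ is transcendental they can agree only if both are constant, contradicting $a_i\neq 0$. So we may assume $y\in\alg$; put $K=\Q(x_1,\dots,x_n,y)$, reorder so that $\lvert\Delta_1\rvert\ge\dots\ge\lvert\Delta_n\rvert$, and set $D=\lvert\Delta_1\rvert$.

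\textbf{Minimality and a first exponent bound.} By Definition~\ref{def:multdep}, the lattice $\Lambda\subseteq\Z^n$ of vectors $(b_i)$ with $\prod_i(x_i-y)^{b_i}=1$ has rank exactly $1$ (a rank-$\ge 2$ sublattice meets a coordinate hyperplane, giving a relation on a proper subset) and is generated by a primitive vector $(a_1,\dots,a_n)$ with every $a_i\neq 0$. For each prime $\mathfrak p$ of $K$ the vector $\bigl(v_{\mathfrak p}(x_i-y)\bigr)_i$ is orthogonal to $(a_i)$, and, away from an easily handled degenerate case, these span an $(n-1)$-dimensional subspace, so $(a_i)$ is, up to sign, the generalised cross product of $n-1$ of them. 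Hence $\max_i\lvert a_i\rvert\le(n-1)!\bigl(\max_{\mathfrak p,i}\lvert v_{\mathfrak p}(x_i-y)\rvert\bigr)^{n-1}$, and since $\lvert v_{\mathfrak p}(x_i-y)\rvert\le[K:\Q]\,h(x_i-y)/\log 2$, combining $h(x_i-y)\le h(x_i)+h(y)+\log 2$ with Propositions~\ref{prop:galbd} and~\ref{prop:htsingmod} (and $[K:\Q]\le[\Q(y):\Q]\prod_i[\Q(x_i):\Q]$) gives, for every $\varepsilon>0$, a bound $\max_i\lvert a_i\rvert\le C_1(n,y)\,D^{\,n(n-1)/2+\varepsilon}$, to be sharpened below.

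\textbf{Field containments via conjugation.} Fix $m$ and suppose $x_m\notin\Q\bigl(y,\{x_i\}_{i\neq m}\bigr)$. Choose $\sigma\in\Gal(\alg/\Q)$ fixing $y$ and all $x_i$ with $i\neq m$ but with $\sigma x_m\neq x_m$; comparing the relation for $(x_i)$ with that for $(\sigma x_i)$ and cancelling common factors gives $\bigl((x_m-y)/(\sigma x_m-y)\bigr)^{a_m}=1$, so $x_m-y=\zeta(\sigma x_m-y)$ for a root of unity $\zeta$. Now pick an embedding $\alg\hookrightarrow\C$ sending $x_m$ to the singular modulus of discriminant $\Delta_m$ whose representative in $\mathfrak{F}_j$ has imaginary part $\lvert\Delta_m\rvert^{1/2}/2$ (the unique reduced form with leading coefficient $1$); by the $q$-expansion of $j$ its image has absolute value $e^{\pi\lvert\Delta_m\rvert^{1/2}}-O(1)$, whereas the image of $\sigma x_m$ is a \emph{different}, hence non-dominant, singular modulus of the same discriminant and so has absolute value $\le e^{\pi\lvert\Delta_m\rvert^{1/2}/2}+O(1)$. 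Taking absolute values in $x_m-y=\zeta(\sigma x_m-y)$, and noting that the image of $y$ is a conjugate of $y$ (hence of bounded absolute value), forces $e^{\pi\lvert\Delta_m\rvert^{1/2}}\le e^{\pi\lvert\Delta_m\rvert^{1/2}/2}+O_y(1)$, which fails once $\lvert\Delta_m\rvert$ exceeds a constant $C_2(y)$. Thus $x_m\in\Q\bigl(y,\{x_i\}_{i\neq m}\bigr)$ whenever $\lvert\Delta_m\rvert>C_2(y)$; if all the $\lvert\Delta_i\rvert$ exceed $C_2(y)$, deleting generators one at a time gives $K=\Q(y,x_i)$ for every $i$, so by Proposition~\ref{prop:galbd} all the $\lvert\Delta_i\rvert$ are comparable in logarithm and $[K:\Q]\le C_3(n,y)\,D^{1/2+\varepsilon}$, whence $\max_i\lvert a_i\rvert\le C_4(n,y)\,D^{(n-1)/2+\varepsilon}$.

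\textbf{Conclusion and the main obstacle.} The indices $m$ with $\lvert\Delta_m\rvert\le C_2(y)$ are disposed of by fixing $x_m$ in one of finitely many ways: either this produces a shorter genuine multiplicative relation, which is handled by induction on $n$ (the base case $n=1$ is immediate, since $(x_1-y)^{a_1}=1$ makes $x_1-y$ a root of unity while at the dominant embedding $\lvert x_1-y\rvert\ge e^{\pi\lvert\Delta_1\rvert^{1/2}}-O_y(1)$), or else one multiplies two instances of the resulting affine condition to reduce $n$ again. So assume all discriminants are large. Working at an embedding making $x_1$ dominant one has $\log\lvert x_1-y\rvert=\pi D^{1/2}+O_y(1)$ and $\log\lvert x_i-y\rvert\le\pi D^{1/2}+O_y(1)$ for $i\ge 2$, so from $\sum_i a_i\log\lvert x_i-y\rvert=0$ and the exponent bound one should read off a bound on $D$ — provided one has an adequate \emph{lower} bound for $\log\lvert x_i-y\rvert$ at this and the other relevant embeddings. \textbf{Securing that lower bound — ruling out a bounded conjugate of some $x_i$ lying extremely close to a conjugate of $y$ — is the crux, and it is exactly here that $y\notin(0,1728)$ enters.} The bounded conjugates of singular moduli concentrate near the arc $\{z\in\mathfrak{F}_j:\lvert z\rvert=1\}$, whose $j$-image is the real interval $[0,1728]$, so a $y$ at positive distance from $[0,1728]$ is automatically separated from them, uniformly in the discriminant; the boundary values $y\in\{0,1728\}$, and the general possibility that $j^{-1}(y)$ is abnormally well approximated by the $\Q$-points arising from CM points, are then handled via Schneider's theorem (if $y\in\alg$ and $j^{-1}(y)\in\alg$ then $y$ is itself a singular modulus), reducing the remaining cases to a Liouville estimate for $x_i-y$. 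I expect this archimedean separation, uniform in $D$, rather than the field-theoretic part, to be where the real work lies.
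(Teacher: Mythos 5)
The paper's proof of this theorem is a short reduction: set $f(z)=j(z)-y$ and observe that the hypothesis $y\notin(0,1728)$ means the unique $\tau\in\mathfrak{F}_j$ with $j(\tau)=y$ does not lie on the open arc $\{z\in\mathfrak F_j:\lvert z\rvert=1,\ z\neq i,\rho\}$; by Proposition~\ref{prop:fundom} this gives $\im(-1/\tau)<\im\tau$ and hence $f(\tau+s)\neq 0$ for all $s\in(0,1)$, which is exactly the ``divisor condition'' needed to invoke \cite[Theorem~1.6]{Fowler21}. That result (whose proof uses o-minimal point counting) then does all the work. Your proposal is a from-scratch elementary attempt, and it has gaps that I do not think are easily repairable.

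The central gap is precisely the one you flag at the end, and the mechanism you propose for it is not correct. To close your argument you need a lower bound for $\log\lvert\iota(x_i)-\iota(y)\rvert$ at the embedding $\iota$ that makes $x_1$ dominant, uniform as $\lvert\Delta_i\rvert\to\infty$. You justify this by asserting that the bounded conjugates of singular moduli of large discriminant concentrate near $j(\{\lvert z\rvert=1\})=[0,1728]$. This is false: by Duke-type equidistribution, the CM points of discriminant $\Delta$ equidistribute in $\mathfrak F_j$ with respect to hyperbolic measure, so the conjugates with $\im\tau\leq C$ spread across all of $\{z\in\mathfrak F_j:\im z\leq C\}$, whose $j$-image fills out an entire disc of radius roughly $e^{2\pi C}$, not a neighbourhood of $[0,1728]$. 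Singular moduli are dense in $\C$, and for arbitrarily large $\lvert\Delta\rvert$ there are singular moduli of discriminant $\Delta$ arbitrarily close to any prescribed $y$. Consequently ``$y$ at positive distance from $[0,1728]$'' does \emph{not} give the uniform separation you need; the true role of $y\notin(0,1728)$ in this theorem is the fundamental-domain condition above, which enters a quite different argument. Schneider's theorem also cannot be converted into a Liouville-type lower bound for $\lvert x_i-y\rvert$ without substantial further input.

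There is a second, smaller gap in the field-theoretic step: from ``$x_m\in\Q(y,\{x_i\}_{i\neq m})$ for all $m$'' you conclude ``$K=\Q(y,x_i)$ for every $i$''. That implication is false in general (e.g.\ take $G=(\Z/2)^3$ and let $H_1,H_2,H_3$ be three distinct index-$2$ subgroups with common intersection $H$; then $\bigcap_{i\neq m}H_i=H$ for each $m$, i.e.\ each generator lies in the compositum of the others, yet $H_m\neq H$). Since abelian $2$-groups occur as Galois groups of compositums of ring class fields, this is not ruled out by the setting. This gap might be patched with more care, but as written the degree bound $[K:\Q]\leq C_3(n,y)D^{1/2+\varepsilon}$ is not established. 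Given both issues I would say the approach, while containing several sound individual observations (the reduction to algebraic $y$, the rank-one lattice of relations, the conjugation trick producing a root-of-unity quotient), does not reach the theorem.
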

	
	We do not need to assume that $y$ is a singular modulus in Theorem~\ref{thm:fixedygd}, because the same proof works for all $y$ outside the real interval $(0, 1728)$.
	
	\begin{proof}
		Let $f(z) = j(z) - y$. Then the only zero of $f$ in $\mathfrak{F}_j$ is at the unique $\tau \in \mathfrak{F}_j$ such that $j(\tau) = y$. Since $y \notin (0, 1728)$, this point $\tau$ does not lie on the arc of the circle $\lvert z \rvert = 1$ strictly between $i$ and $\rho$. So
		\[ \im \frac{-1}{\tau} < \im \tau,\]
		and, by Proposition~\ref{prop:fundom}, $f(\tau + s) \neq 0$ for all $s \in (0, 1)$. Thus, $f$ satisfies the ``divisor condition'' of \cite[Definition~1.3]{Fowler21}, and hence \cite[Theorem~1.6]{Fowler21} implies the desired result.
	\end{proof}
	
	\begin{thm}\label{thm:fixedybad}
		Let $y$ be a singular modulus. Let $n \in \Z_{>0}$.  There exist only finitely many $n$-tuples $(x_1, \ldots, x_n)$ such that $x_1, \ldots, x_n, y$ are pairwise distinct singular moduli and $\{x_1-y, \ldots, x_n-y\}$ is minimally multiplicatively dependent. 
	\end{thm}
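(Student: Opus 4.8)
The plan is to reduce Theorem~\ref{thm:fixedybad} to the already-established Theorem~\ref{thm:fixedygd}, by handling the only obstruction: a singular modulus $y \in (0, 1728)$. Recall that the singular moduli in the real interval $(0, 1728)$ are exactly those of discriminant $-3$ and $-4$, namely $y = 0$ (discriminant $-3$, excluded here since $0 \notin (0,1728)$ anyway, but more to the point $0$ was already handled by Pila--Tsimerman) and $y = 1728$ (discriminant $-4$, also not in the open interval). Wait --- in fact the only singular moduli lying strictly inside $(0,1728)$ are $0$ and $1728$ themselves, which are the \emph{endpoints}, so one should check carefully: the image under $j$ of $\{z \in \mathfrak{F}_j : |z| = 1\}$ is $[0,1728]$, and the interior points of this arc (strictly between $\rho$ and $i$) map to the open interval $(0, 1728)$, but none of these interior points is a CM point of discriminant giving a singular modulus --- actually $e^{i\pi/2}=i$ gives $1728$ and $\rho$ gives $0$, and these are the only CM points on the arc. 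Hence \emph{there are no singular moduli in the open interval $(0, 1728)$ at all}, so every singular modulus $y$ satisfies $y \notin (0, 1728)$, and Theorem~\ref{thm:fixedybad} is an immediate special case of Theorem~\ref{thm:fixedygd}.

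So the real content of the proof is this observation: \emph{no singular modulus lies in $(0,1728)$}. To justify it cleanly I would argue via the fundamental domain. By the discussion in Section~\ref{subsec:fund}, the $j$-function is real-valued on $\mathfrak{F}_j$ precisely on the boundary arcs and on the imaginary axis, and $j(\{z \in \mathfrak{F}_j : |z| = 1\}) = [0, 1728]$. On the imaginary axis $j$ takes values in $(-\infty, 0] \cup \{1728\}$? More precisely, $j$ is real and $\leq 0$ on the segment from $\rho$ up to $i$ along... no: the imaginary axis above $i$ maps to $[1728, \infty)$ and from $i$ down... $i$ is the bottom of the imaginary axis inside $\mathfrak{F}_j$. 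So on $\mathfrak{F}_j$ the real locus is: the imaginary axis $\{ti : t \geq 1\}$, mapping to $[1728, +\infty)$; the vertical boundary line $\re z = -1/2$, mapping to $(-\infty, 0]$; and the unit-circle arc from $\rho$ to $i$, mapping to $[0, 1728]$. A singular modulus $y \in (0,1728)$ would have to have its preimage $\tau \in \mathfrak{F}_j$ on the \emph{open} arc strictly between $\rho$ and $i$. But such $\tau = e^{i\theta}$ with $\pi/2 < \theta < 2\pi/3$ satisfies $\tau^2 - 2\cos\theta\,\tau + 1 = 0$ only when $2\cos\theta \in \Q$, and then $[\Q(\tau):\Q] = 2$ forces the quadratic to have integer coefficients after clearing denominators; I would check that the only rational values $\cos\theta$ in the relevant range giving a genuine quadratic irrationality on $|z| = 1$ are $\cos\theta = 0$ ($\tau = i$) and $\cos\theta = -1/2$ ($\tau = \rho$), using that $z\bar z = 1$ on the circle so the minimal polynomial is $a z^2 + b z + a$ with $|b| \leq 2a$ and $\gcd(a,b,a) = \gcd(a,b) = 1$, hence $a = 1$, $b \in \{-1, 0\}$ in the half-open arc $\re z \in [-1/2, 0]$ --- giving discriminants $-3, -4$ with singular moduli $0$ and $1728$, the endpoints. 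Thus no singular modulus lies in the \emph{open} interval $(0, 1728)$.

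With that in hand the proof is two lines: let $y$ be any singular modulus; by the above $y \notin (0, 1728)$; apply Theorem~\ref{thm:fixedygd} to conclude. I expect the only genuinely delicate point to be the arithmetic classification of CM points on the boundary arc --- i.e., verifying that $0$ and $1728$ are literally the only singular moduli with real part in $[0,1728]$ that are $\leq$ the endpoints, equivalently that the unit-circle arc of $\mathfrak{F}_j$ contains no CM point other than $i$ and $\rho$. This is classical (the $j$-values $0$ and $1728$ are exactly discriminants $-3$ and $-4$, the only ones with class number one and whose reduced form sits on the unit circle), but it must be stated carefully so that the reduction to Theorem~\ref{thm:fixedygd} is airtight. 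Everything else is bookkeeping: $x_1, \ldots, x_n, y$ pairwise distinct and $\{x_1 - y, \ldots, x_n - y\}$ minimally multiplicatively dependent are exactly the hypotheses of Theorem~\ref{thm:fixedygd}, which does not require $y$ to be a singular modulus, so no further work is needed.
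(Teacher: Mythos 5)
Your proof rests on the claim that \emph{no singular modulus lies in the open interval $(0,1728)$}, and this claim is false. The error is in the deduction ``$\gcd(a,b,a)=\gcd(a,b)=1$, hence $a=1$'': coprimality of $a$ and $b$ of course does not force $a=1$. Concretely, take the reduced form $(a,b,c)=(2,1,2)$ of discriminant $1-16=-15$. Then
\[
\tau = \frac{-1+\sqrt{-15}}{4}
\]
satisfies $2\tau^2+\tau+2=0$, has $\lvert\tau\rvert = 1$ and $\re\tau = -1/4\in(-1/2,0)$, so $\tau$ lies on the open unit-circle arc of $\mathfrak{F}_j$ strictly between $\rho$ and $i$. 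The corresponding singular modulus is
\[
j(\tau) = \frac{-191025+85995\sqrt{5}}{2}\approx 632.8\in(0,1728).
\]
More generally, any discriminant $\Delta$ admitting a reduced form $(a,b,a)$ with $0<b<a$ produces a singular modulus in $(0,1728)$ (e.g.\ $\Delta=-32$ via $(3,2,3)$), and there are infinitely many such discriminants. So the reduction to Theorem~\ref{thm:fixedygd} cannot proceed by showing the hypothesis $y\notin(0,1728)$ always holds --- it does not.

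The paper's proof instead exploits that the result is invariant under Galois conjugation. Given a singular modulus $y\in(0,1728)$ of discriminant $\Delta$ (necessarily with $\lvert\Delta\rvert>4$, since $0$ and $1728$ are the only singular moduli of discriminants $-3,-4$), one passes to the $\Q$-conjugate
\[
y' = j\Big(\frac{-k+\lvert\Delta\rvert^{1/2}i}{2}\Big),
\]
the unique conjugate whose reduced form has $a=1$. Its preimage has imaginary part $\lvert\Delta\rvert^{1/2}/2>1$, so it is not on the unit-circle arc, and hence $y'\notin(0,1728)$. Theorem~\ref{thm:fixedygd} then applies with $y'$ in place of $y$, and since $y,y'$ are conjugate over $\Q$ and the statement is Galois-stable, the finiteness for $y$ follows. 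Your two-line conclusion would be correct if the false lemma were true, but the missing step --- handling the genuinely occurring case $y\in(0,1728)$ --- is the entire content of this theorem beyond Theorem~\ref{thm:fixedygd}.
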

	
	\begin{proof}
		By Theorem~\ref{thm:fixedygd}, we may assume that $y \in (0, 1728)$. Let $\Delta = \Delta(y)$. Note that $\lvert \Delta \rvert > 4$, since $0, 1728$ are the only singular moduli with discriminant in the set $\{-3, -4\}$. In particular, $y$ has the $\Q$-conjugate 
		\[y' = j\Big(\frac{-k + \lvert \Delta \rvert^{1/2}i}{2}\Big),\]
		where $k=0$ if $\Delta$ is even and $k=1$ if $\Delta$ is odd. Since
		\[ \frac{\lvert \Delta \rvert^{1/2}}{2} > 1,\]
		we have that $y' \notin (0, 1728)$. Thus, Theorem~\ref{thm:fixedygd} holds for $y'$, and so Theorem~\ref{thm:fixedygd} for $y$ follows since $y, y'$ are conjugate over $\Q$.
	\end{proof} 
	
	Theorem~\ref{thm:fixedy} seems stronger than Theorem~\ref{thm:fixedybad}, since the former does not require the multiplicative dependence to be minimal, only that all the exponents are non-zero. In fact, we may deduce Theorem~\ref{thm:fixedy} from Theorem~\ref{thm:fixedybad} by the following formal argument.
	
	\begin{prop}\label{prop:mindep}
		Let $\mathcal{S} \subset \C^\times$. Let $n \in \Z_{>0}$. Suppose, for every $k \in \{1, \ldots, n\}$ there are only finitely many $k$-tuples $(s_1, \ldots, s_k) \in \mathcal{S}^k$ such that $s_1, \ldots, s_k$ are pairwise distinct and $\{s_1, \ldots, s_k\}$ is minimally multiplicatively dependent. Then there are only finitely many $n$-tuples $(s_1, \ldots, s_n) \in \mathcal{S}^n$ such that $s_1, \ldots, s_n$ are pairwise distinct and 
		\[ \prod_{i=1}^n s_i^{a_i} = 1\]
		for some $a_1, \ldots, a_n \in \Z \setminus \{0\}$.
	\end{prop}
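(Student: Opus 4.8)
The plan is to produce a single finite subset $W \subseteq \mathcal{S}$, depending only on $\mathcal{S}$ and $n$, such that every $n$-tuple $(s_1, \ldots, s_n) \in \mathcal{S}^n$ of the kind in the statement has all of its coordinates in $W$. Once this is done, the conclusion is immediate: there are at most $\lvert W \rvert^n$ such tuples.

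The heart of the matter is the following purely combinatorial claim, the multiplicative analogue of the fact that the support of a nonzero vector in a linear subspace is a union of circuits: if $s_1, \ldots, s_m \in \C^\times$ are pairwise distinct and $\prod_{i=1}^m s_i^{a_i} = 1$ for some $a_1, \ldots, a_m \in \Z \setminus \{0\}$, then every $s_i$ lies in some minimally multiplicatively dependent subset of $\{s_1, \ldots, s_m\}$. I would prove this by induction on $m$. Fix $i_0$. Since $\{s_1, \ldots, s_m\}$ is multiplicatively dependent, it contains a minimally multiplicatively dependent subset $C$. If $s_{i_0} \in C$ we are done. Otherwise, observe that minimality of $C$ forces every exponent in a witnessing relation $\prod_{s_i \in C} s_i^{d_i} = 1$ to be nonzero (a zero exponent would exhibit a proper nonempty subset of $C$ as multiplicatively dependent). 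Picking any $j$ with $s_j \in C$, the product of $\bigl(\prod_{i=1}^m s_i^{a_i}\bigr)^{d_j}$ with $\bigl(\prod_{s_i \in C} s_i^{d_i}\bigr)^{-a_j}$ is a relation in which $s_j$ has exponent $0$, while $s_{i_0}$ (which is not in $C$) keeps the nonzero exponent $a_{i_0} d_j$. Deleting the variables that occur with zero exponent leaves a relation with all exponents nonzero, supported on a subset of $\{s_1, \ldots, s_m\} \setminus \{s_j\}$ that still contains $s_{i_0}$; the inductive hypothesis, applied to this strictly shorter relation, produces a minimally multiplicatively dependent subset containing $s_{i_0}$.

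Granting the claim, I would finish as follows. Whether a tuple of pairwise distinct elements is minimally multiplicatively dependent does not depend on the order of its entries, so the hypothesis — finiteness of the minimally multiplicatively dependent $k$-tuples over $\mathcal{S}$ for each $k \le n$ — says precisely that $\mathcal{S}$ has only finitely many minimally multiplicatively dependent subsets of size at most $n$. Let $W$ be the (finite) union of all of these subsets. If $(s_1, \ldots, s_n) \in \mathcal{S}^n$ is pairwise distinct with $\prod_{i=1}^n s_i^{a_i} = 1$ for some $a_1, \ldots, a_n \in \Z \setminus \{0\}$, then by the claim each $s_i$ lies in a minimally multiplicatively dependent subset of $\{s_1, \ldots, s_n\} \subseteq \mathcal{S}$, which has size at most $n$, and hence $s_i \in W$. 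Therefore $(s_1, \ldots, s_n) \in W^n$, which is what was needed.

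The only delicate point is the combinatorial claim, and within it the ``rotation'' step: one must check that eliminating $s_j$ genuinely shortens the relation while keeping $s_{i_0}$ in the support — which is exactly why we take $C$ to be minimal (so that its defining relation has full support) and $s_{i_0} \notin C$. Everything else is bookkeeping.
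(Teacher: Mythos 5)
Your proof is correct and follows the same strategy as the paper: reduce to the observation that every coordinate of a full-support multiplicative relation lies in a minimally multiplicatively dependent subset of the tuple, then apply the finiteness hypothesis to cover all coordinates by a single finite set. The only difference is that the paper simply cites this observation from \cite[Lemma~5.9]{Fowler21}, whereas you prove it from scratch by a clean circuit-elimination induction.
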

	
	\begin{proof}
		Let $(s_1, \ldots, s_n) \in \mathcal{S}^n$ be such that $s_1, \ldots, s_n$ are pairwise distinct and 
		\[ \prod_{i=1}^n s_i^{a_i} = 1\]
		for some $a_1, \ldots, a_n \in \Z \setminus \{0\}$. The set $\{s_1, \ldots, s_n\}$ is thus multiplicatively dependent. For each $i \in \{1, \ldots, n\}$, there exists \cite[Lemma~5.9]{Fowler21} a minimally multiplicatively dependent subset $S_i \subset \mathcal{S}$ such that $s_i \in S_i$. In particular, $s_1, \ldots, s_n$ all belong to the set consisting of, for each $k \in \{1, \ldots, n\}$, all the coordinates of tuples $(s_1', \ldots, s_k') \in \mathcal{S}^k$ such that $s_1', \ldots, s_k'$ are pairwise distinct and the set $\{s_1', \ldots, s_k'\}$ is minimally multiplicatively dependent. By assumption, this set is finite and hence there are only finitely many possibilities for $(s_1, \ldots, s_n)$.
	\end{proof}
	
	\begin{proof}[Proof of Theorem~\ref{thm:fixedy}]
		Apply Proposition~\ref{prop:mindep} to Theorem~\ref{thm:fixedybad} with 
		\[ \mathcal{S} = \{ x - y : x \mbox{ is a singular modulus and } x \neq y\}.\qedhere\] \end{proof}

	\section{Multiplicative special curves}\label{sec:MSC}
	
	In this section, we prove Theorem~\ref{thm:MSC}. To do this, we first prove the following result.
	
	\begin{thm}\label{thm:MSCshape}
		Let $n \in \Z_{>0}$. Suppose that $T \subset \C^{n+1}$ is a multiplicative special curve. Then there exist $k \in \{1, \ldots, n\}$, $b_1, \ldots, b_k \in \Z \setminus \{0\}$, and pairwise distinct $N_1, \ldots, N_k \in \Z_{>1}$ such that, after reordering the first $n$ coordinates, we have that
		\[ T = \{(\alpha_1, \ldots, \alpha_m, j(g_1 z), \ldots, j(g_l z), j(z)) : z \in \h\},\]
		where
		\begin{enumerate}
			\item $\alpha_1, \ldots, \alpha_m$ are pairwise distinct and such that 
			\[ \{\alpha_1, \ldots, \alpha_m\} = \{\alpha \in \C : \alpha \mbox{ is either a zero or a pole of } \prod_{i=1}^k F_{N_i}(X)^{b_i} \};\]
			\item $g_1, \ldots, g_l \in \GL$ are pairwise distinct and such that
			\[ \{g_1, \ldots, g_l\} = \bigcup_{i=1}^k C(N_i). \]
		\end{enumerate}
	\end{thm}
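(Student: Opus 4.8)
The starting point is the definition of a multiplicative special curve: we are given pairwise distinct $j$-maps $f_1,\dots,f_n,f$, at least one non-constant, and exponents $a_1,\dots,a_n \in \Z\setminus\{0\}$ with $\prod_{i=1}^n (f_i(z)-f(z))^{a_i}=1$ identically on $\h$. The first reduction I would make is to understand what $f$ looks like. If $f$ were constant, say $f \equiv x$ a singular modulus, then every $f_i$ is a $j$-map with $f_i(z)-x$ either a nonzero constant or of the form $j(r_iz+s_i)-x$; I would argue that at least one $f_i$ must be non-constant (since not all $f_i-f$ can be constant in a nontrivial multiplicative relation unless they are roots of unity, which differences of this shape are not, e.g.\ using that $j(r_iz+s_i)-x$ is unbounded) and then reparametrise. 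In general, after composing with an element of $\GL$ — i.e.\ replacing $z$ by $g^{-1}z$ for a suitable $g$ — I can assume that the non-constant $j$-map among $f_1,\dots,f_n,f$ which is ``innermost'' is $j$ itself, i.e.\ $f(z)=j(z)$. Here I would use Proposition~\ref{prop:unique} and the description of non-constant $j$-maps as $z\mapsto j(rz+s)$ to normalise; the point is that the relation is invariant under $z \mapsto \gamma z$ for $\gamma\in\SL$ and rescaling by $\GL$, so I can arrange the ``bottom'' map to be $j(z)$.

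Once $f(z)=j(z)$, each $f_i$ is either a constant singular modulus $\alpha$ (grouping these as $\alpha_1,\dots,\alpha_m$, pairwise distinct) or $j(g_iz)$ for $g_i\in\GL$ with $g_i$ not a scalar times an element of $\SL$ (grouping these as $j(g_1z),\dots,j(g_lz)$, with $g_1,\dots,g_l$ pairwise distinct as $j$-maps). So the relation reads
\[
\prod_{p=1}^m (j(z)-\alpha_p)^{a_p'} \prod_{q=1}^l (j(z)-j(g_q z))^{b_q'} = 1
\]
for nonzero integer exponents. Now I substitute $X=j(z)$; since $j\colon\mathfrak{F}_j\to\C$ is a bijection, this is an identity of meromorphic functions on $\h$, and I want to turn it into an identity of rational functions in $X$. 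For the constant factors this is immediate. For the factors $j(z)-j(g_qz)$, the key input is that $j(g_qz)$ is an algebraic function of $j(z)$: by the theory of modular polynomials, if $g_q$ corresponds (via Proposition~\ref{prop:unique}) to $N_q\in\Z_{>0}$ and an element of $C(N_q)$, then $j(z)-j(g_qz)$ is one factor of $\Phi_{N_q}(X,X)=F_{N_q}(X)$ up to the other factors $j(z)-j(hz)$ with $h\in C(N_q)$. The crucial combinatorial step — and I expect this to be the main obstacle — is to show that the multiset $\{g_1,\dots,g_l\}$ must be a \emph{union of complete orbits} $\bigcup_{i}C(N_i)$ with the \emph{same} exponent on every element of a given $C(N_i)$: i.e.\ the relation is ``Galois-stable''. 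I would prove this by analysing the action of $\SL$ (via $z\mapsto\gamma z$) on the relation: applying $\gamma$ permutes the maps $j(g_qz)$ among themselves only if one first knows closure, so instead I argue via the Galois action on the function field, or more concretely via Proposition~\ref{prop:jmaps}(1),(3): $\SL$ acts transitively on each $C(N)$ modulo re-normalisation, so if some $g_q$ with associated level $N_q$ appears with exponent $b_q'$, then applying a suitable $\gamma\in\SL$ (which fixes $j(z)$, sending it to $j(\gamma^{-1}\cdot)$ after change of variables) and comparing with the uniqueness of multiplicative relations forces every element of $C(N_q)$ to appear with the same exponent $b_q'=:b_i$. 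This uses that the only way a product of the irreducible-over-$\C(X)$-ish factors $X-j(g z)$ can be identically $1$ is that, after collecting, the exponents are constant on $\Gal$-orbits, since distinct $C(N_i)$ contribute factors vanishing at distinct singular moduli (Corollary~\ref{cor:rootsofF}, Corollary~\ref{cor:vanish}).

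With Galois-stability in hand, the relation collapses to $\prod_{i=1}^k F_{N_i}(X)^{b_i} \cdot (\text{constant factors})^{\pm} = 1$ as rational functions in $X$, and then matching zeros and poles: the zeros and poles of $\prod_{i=1}^k F_{N_i}(X)^{b_i}$ are exactly the $\alpha_p$ (which are singular moduli by Corollary~\ref{cor:rootsofF}), appearing with the appropriate multiplicities; this identifies $\{\alpha_1,\dots,\alpha_m\}$ as in (1) and $\{g_1,\dots,g_l\}=\bigcup_{i=1}^k C(N_i)$ as in (2). I should also check the constraints $N_i>1$ (if $N_i=1$ then $C(1)=\{\mathrm{Id}\}$ and $j(z)-j(z)=0$, impossible), that the $N_i$ are pairwise distinct (merge equal ones into a single exponent, and drop any with $b_i=0$), that $k\le n$ (since $l=\sum_i |C(N_i)| \ge k$ and $m+l=n$, so actually $k \le l \le n$), and that $\prod_i F_{N_i}(X)^{b_i}$ is genuinely non-constant so that $m\ge1$ and the curve is as claimed — this last point is exactly Corollary~\ref{cor:vanish}. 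Finally I would record that after the initial normalisation by $\GL$, ``reordering the first $n$ coordinates'' absorbs the bookkeeping, completing the proof.
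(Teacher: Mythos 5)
The overall structure of your argument (normalise so $f(z)=j(z)$, separate the constant $f_i$ from the non-constant ones, prove Galois-stability via the $\SL$-action and Proposition~\ref{prop:jmaps}, then match zeros and poles of $\prod F_{N_i}^{b_i}$) is the same as the paper's, and your treatment of the Galois-stability and matching steps is essentially correct. However, there is a genuine gap at the very first reduction: you do not establish that $f$ must be \emph{non-constant}, and your proposed argument for it does not work.

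If $f$ were constant, say $f\equiv x$, you correctly observe that some $f_i$ is non-constant, but then you say ``and then reparametrise''. There is no valid reparametrisation here: the relation $\prod_i(f_i-f)^{a_i}=1$ singles out $f$, and one cannot swap a constant $f$ with a non-constant $f_i$ because the resulting differences $f_j-f_i$ are not of the same form as $f_j-f$; nor can a change of variable $z\mapsto gz$ turn a constant map into $j$. Your parenthetical remark (``not all $f_i-f$ can be constant... $j(r_iz+s_i)-x$ is unbounded'') also does not close the gap: a product of unbounded non-constant factors $(j(r_iz+s_i)-x)^{a_i}$ can certainly be bounded or even constant, and that is exactly the possibility that must be excluded. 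In the paper this step is Proposition~\ref{prop:multindconstsing}, which is genuinely nontrivial: its proof evaluates the putative relation at CM points $z_k=\sqrt{-k}$, uses a Galois-conjugation argument and the lower bound of Proposition~\ref{prop:galbd}, and ultimately invokes Theorem~\ref{thm:fixedy} (the fixed-$y$ finiteness result of Pila--Tsimerman and Fowler). This is a substantial diophantine input with no elementary substitute, and its absence is the central missing piece of your proposal. A smaller omission: you also need to rule out the case where $f$ is non-constant but every $f_i$ is constant; this is easy (Proposition~\ref{prop:multind1map}, transcendence of $j$), and worth stating explicitly so the case analysis is complete.
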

	
	This follows immediately from the following result, which we will prove in Section~\ref{subsec:shape}. Throughout this paper, by a change of variables we mean replacing a variable $z$ by $gz$ for some $g \in \GL$.
	
	\begin{thm}\label{thm:multind}
		Let $n \in \Z_{>0}$. Let $f_1, \ldots, f_n, f$ be pairwise distinct $j$-maps, at least one of which is non-constant. Suppose that $a_1, \ldots, a_n \in \Z \setminus \{0\}$ and $c \in \C^\times$ are such that
		\begin{align}\label{eq:mult}
			\prod_{i=1}^n (f_i(z) - f(z))^{a_i} = c
		\end{align}
		for all $z \in \h$. Then, after a change of variables, $f(z) = j(z)$ and there exist $k \in \{1, \ldots, n\}$, $N_1, \ldots, N_k \in \Z_{>1}$ pairwise distinct, and $b_1, \ldots, b_k \in \Z \setminus \{0\}$ such that
		\[ \{ f_i : f_i \mbox{ is non-constant} \} = \{ j(g z) : g \in C(N_i), \, i =1,\ldots, k\},\]
		and, for all $z \in \h$,
		\[ \prod_{\substack{i \in \{1, \ldots, n\} \mbox{ s.t.}\\ f_i \mbox{ non-constant}}}  (f_i(z) - f(z))^{a_i} = \prod_{i=1}^k \Big (\prod_{g \in C(N_i)} (j(gz) - j(z)) \Big )^{b_i} \]
		and
		\[ \prod_{\substack{i \in \{1, \ldots, n\} \mbox{ s.t.}\\ f_i \mbox{ constant}}}  (f_i(z) - f(z))^{a_i} = c \prod_{i=1}^k F_{N_i}(j(z))^{-b_i}. \]
	\end{thm}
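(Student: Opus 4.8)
The plan is to prove Theorem~\ref{thm:multind} in four stages: first that $f$ must be non-constant; then, after a change of variables making $f(z)=j(z)$, that the non-constant $f_i$ are certain maps $j(g_iz)$ with $g_i\in C(N_i)$; then that these $g_i$ fill out complete families $\bigsqcup_l C(N_l)$ carrying constant exponents; and finally that rewriting those families through the polynomials $F_N$ yields the stated identities.

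\emph{Stage 1: $f$ is non-constant.} Suppose $f\equiv x_0$ for a singular modulus $x_0$. Let $J$ index the non-constant $f_i$ (so $J\neq\emptyset$ by hypothesis) and $I$ the rest, with $f_i\equiv x_i$ for $i\in I$; moving the constant factors across, $\prod_{i\in J}(f_i-x_0)^{a_i}$ is a nonzero constant $c'$. Writing each non-constant $j$-map as $f_i(z)=j(r_iz+s_i)$ with $r_i\in\Q_{>0}$ and $s_i\in\Q$, one has $f_i(z)-x_0\sim e^{-2\pi is_i}e^{-2\pi ir_iz}$ as $\im z\to\infty$, so for the product to remain bounded away from $0$ and $\infty$ we need $\sum_{i\in J}a_ir_i=0$, and then $c'=\prod_{i\in J}e^{-2\pi ia_is_i}$ is a root of unity. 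Raising to a suitable power gives $\prod_{i\in J}(f_i(z)-x_0)^{Ma_i}\equiv1$ for some $M\in\Z_{>0}$. Specialising at CM points $z$, where each $f_i(z)$ is a singular modulus, one obtains multiplicative relations $\prod_{i\in J}(f_i(z)-x_0)^{Ma_i}=1$; since the non-constant $f_i$ take infinitely many distinct values on CM points while the coincidences $f_i(z)=f_{i'}(z)$ and $f_i(z)=x_0$ are confined to CM points whose $j$-value has bounded discriminant (Proposition~\ref{prop:eqcoords}), these relations furnish infinitely many tuples of pairwise distinct singular moduli avoiding $x_0$ — contradicting Theorem~\ref{thm:fixedy} applied with $y=x_0$.

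\emph{Stages 2--3: structure of the non-constant maps.} By Stage~1 and the description of non-constant $j$-maps (Section~\ref{sec:jmap}) one may change variables so that $f(z)=j(z)$; then Proposition~\ref{prop:unique} writes each non-constant $f_i$ as $j(g_iz)$ for a unique $g_i\in C(N_i)$, with $N_i>1$ as $f_i\neq f$. With $I$ and $x_i$ as before, the relation becomes
\[ P(z):=\prod_{i\in J}\bigl(j(g_iz)-j(z)\bigr)^{a_i}=c\prod_{i\in I}\bigl(x_i-j(z)\bigr)^{-a_i}, \]
whose right-hand side is a rational function of $j(z)$, hence $\SL$-invariant; in particular $J\neq\emptyset$, since $\prod_{i\in I}(x_i-j(z))^{a_i}$ cannot be constant (its zero at $z$ with $j(z)=x_i$ is uncancellable). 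For $\gamma\in\SL$, substituting $z\mapsto\gamma z$, using $j(\gamma z)=j(z)$, and invoking Proposition~\ref{prop:jmaps}(1)--(2) — which provides a permutation $\gamma^*$ of each $C(N)$ with $j(g\gamma z)=j(\gamma^*(g)z)$ — gives $P(z)=\prod_{i\in J}(j(\gamma^*(g_i)z)-j(z))^{a_i}$. The crux is then a functional transcendence statement: the functions $j(gz)-j(z)$, $g\in\bigsqcup_{N>1}C(N)$, admit no multiplicative relation $\prod_g(j(gz)-j(z))^{m_g}\in\C^\times$ other than the trivial one. Granting this, the two expressions for $P$ force $\{(g_i,a_i)\}_{i\in J}$ to be $\gamma^*$-invariant for every $\gamma$; since $\SL$ acts transitively on each $C(N)$ (Proposition~\ref{prop:jmaps}(3)), after reordering the first $n$ coordinates we get $\{g_i:i\in J\}=\bigsqcup_{l=1}^k C(N_l)$ with $N_1,\dots,N_k\in\Z_{>1}$ distinct and $a_i$ equal to a constant $b_l\in\Z\setminus\{0\}$ on $C(N_l)$, where $1\le k\le n$.

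\emph{Stage 4 and the main obstacle.} Using $\prod_{g\in C(N)}(j(z)-j(gz))=F_N(j(z))$ one rewrites $P(z)=\prod_{l=1}^k(\pm F_{N_l}(j(z)))^{b_l}$; comparing with the rational expression for $P$ and absorbing the overall sign into $c$ yields both displayed identities of the theorem, along with $\{f_i:f_i\text{ non-constant}\}=\{j(gz):g\in C(N_l),\,l=1,\dots,k\}$. The main obstacle is the functional transcendence claim of Stage~3. If the exponents $m_g$ are constant on each $C(N)$, the relation reduces to $\prod_l(\pm F_{N_l}(X))^{m_l}\in\C^\times$, which Corollary~\ref{cor:vanish} forbids unless all $m_l=0$; so the task is to show any relation has constant exponents on each orbit. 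For this I would argue with divisors: a constant relation is $\SL$-invariant, so on a modular curve resolving all the maps its divisor vanishes, the polar part being supported at cusps and the zero part at CM points, so both vanish; one then isolates the largest level $N^*$ occurring, using Proposition~\ref{prop:eqcoordsy} (at a CM point of discriminant $-4N^*$ only level-$N^*$ maps can vanish) and Corollary~\ref{cor:rootsofF} (every singular modulus of discriminant $-4N^*$ arises from $F_{N^*}$) to detect each element of $C(N^*)$ at a suitable CM point, conclude the level-$N^*$ exponents are constant, and descend. Stage~1 is the other delicate point, though there the reduction to Theorem~\ref{thm:fixedy} keeps it comparatively short.
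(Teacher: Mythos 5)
Your overall architecture matches the paper's proof: rule out constant $f$, normalise to $f=j$ and write the nonconstant $f_i$ as $j(g_iz)$ with $g_i\in C(N_i)$, use the $\SL$-permutation of $C(N)$ from Proposition~\ref{prop:jmaps} to derive, for each $\gamma\in\SL$, a new multiplicative relation that a functional-independence input then collapses, conclude that the $g_i$ fill out complete sets $C(N_l)$ carrying constant exponents, and finally rewrite via $F_{N_l}$ and Corollary~\ref{cor:vanish}. Stage~1 is genuinely different from the paper's Proposition~\ref{prop:multindconstsing}: rather than comparing a CM specialisation with a Galois conjugate to cancel $c$, you use the $q$-expansion to force $\sum a_ir_i=0$ and $c$ to be a root of unity, then apply Theorem~\ref{thm:fixedy} directly. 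That is a clean and correct alternative; the one slip is citing Proposition~\ref{prop:eqcoords} for the coincidence $f_i(z)=x_0$, which it does not cover (it handles $f_i(z)=f_{i'}(z)$); the former needs its own, easy, argument, e.g.\ restricting to CM $z$ whose imaginary quadratic field differs from that of $x_0$, or applying Proposition~\ref{prop:eqcoords} to $g_i$ and to a $g'$ with $j(g'z)\equiv x_0$ impossible.

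The genuine gap is the one you flag: Stage~3 rests on the claim that $\prod_g(j(gz)-j(z))^{m_g}\in\C^\times$ forces all $m_g=0$, which is exactly the paper's Proposition~\ref{prop:multindnoconst}, and your divisor sketch does not establish it. The difficulty is that at a CM point of discriminant $-4N^*$ several level-$N^*$ factors may vanish simultaneously and to varying orders (e.g.\ at $z_0=\sqrt{N^*}\,i$ every $g\in C(N^*)$ with $gz_0$ in the $\SL$-orbit of $z_0$ contributes), so the vanishing of the total divisor does not by itself decouple the individual exponents $m_g$; one still needs to exhibit, for each $g\in C(N^*)$, a point where only that factor vanishes, or some other device to isolate its exponent. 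The paper's proof of Proposition~\ref{prop:multindnoconst} supplies precisely this: after normalising $f=j$ and ordering the nonconstant maps lexicographically by $(r_i,s_i)$, it constructs an explicit $z_0$ (via Proposition~\ref{prop:fundom}) at which exactly one factor $f_1-f$ vanishes while the others do not, immediately contradicting constancy. Without this, or an equivalent, ingredient, Stages~3--4 do not close.
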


	\subsection{Functional independence modulo constants}
	
	Before proving Theorem~\ref{thm:multind}, we first prove some propositions using ideas from \cite[\S2]{Fowler21}. These will allow us to show that if 
	\begin{align}\label{eq:MSC}
		\{(f_1(z), \ldots, f_n(z), f(z)) : z \in \h\}
	\end{align}
	is a multiplicative special curve, then we must be in the situation that some $f_i$ is non-constant, $f$ is non-constant, and some $f_i$ is constant.
	
	\begin{definition}\label{def:funind}
		Functions $f_1, \ldots, f_n \colon \h \to \C$ are called multiplicatively independent modulo constants if, whenever $a_1, \ldots, a_n \in \Z$ are not all zero, the function $F \colon \h \to \C$ defined by
		\[ F(z) = \prod_{i=1}^n f_i(z)^{a_i}\]
		is non-constant.
	\end{definition}
	
	\begin{prop}\label{prop:multind1map}
		Let $n \in \Z_{>0}$. Let $f$ be a non-constant $j$-map. Let $\alpha_1, \ldots, \alpha_n \in \C$ be pairwise distinct. Then the functions $h_i(z) = f(z) - \alpha_i$ are multiplicatively independent modulo constants. 
	\end{prop}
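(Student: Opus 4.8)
The plan is to argue by contradiction using the behaviour of $f$ near its poles (equivalently, near the cusp). Suppose $a_1, \ldots, a_n \in \Z$, not all zero, are such that $\prod_{i=1}^n (f(z) - \alpha_i)^{a_i}$ is a constant $c$. Since $f$ is a non-constant $j$-map, by the results of Section~\ref{sec:jmap} there exist $r, s \in \Q$ with $r > 0$ such that $f(z) = j(rz+s)$, and hence $f$ is a non-constant holomorphic function $\h \to \C$ which, as $\im z \to \infty$, satisfies $\lvert f(z) \rvert \to \infty$ (because $j(w) = e^{-2\pi i w} + 744 + \cdots$ has a pole at $i\infty$, and $rz+s$ has imaginary part $r \im z \to \infty$). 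More precisely, $f(z) \sim e^{-2\pi i (rz+s)}$ as $\im z \to \infty$, so in particular $f$ takes every value in $\C$ outside a compact set, and each $f(z) - \alpha_i$ is a nowhere-zero... no: rather, each $f(z) - \alpha_i$ has zeros, but near the cusp it is nonzero and behaves like $f(z)$ itself.

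The key step: fix a vertical line $z = x_0 + it$ and let $t \to \infty$. Then $f(x_0 + it) - \alpha_i = e^{-2\pi i r(x_0 + it)}e^{-2\pi i rs/r}(1 + o(1))$... let me instead just say $\lvert f(x_0+it) - \alpha_i \rvert = \lvert f(x_0+it) \rvert (1 + o(1)) = e^{2\pi r t}(1+o(1))$ for each $i$. Therefore
\[
\Big\lvert \prod_{i=1}^n (f(x_0+it) - \alpha_i)^{a_i} \Big\rvert = e^{2\pi r t (\sum_{i=1}^n a_i)} (1 + o(1))
\]
as $t \to \infty$. If this product equals the constant $c$, then taking absolute values and letting $t \to \infty$ forces $\sum_{i=1}^n a_i = 0$ (otherwise the left side tends to $0$ or $\infty$, contradicting $\lvert c \rvert$ being a fixed positive number; note $c \neq 0$ since none of the finitely many zeros of the $f(z) - \alpha_i$ can make the product vanish identically, and the product is not identically zero as $f$ is non-constant). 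So we may assume $\sum a_i = 0$, and in particular some $a_i$ is positive and some $a_j$ is negative, so $n \geq 2$.

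Now the obstacle — and the main content — is to rule out the case $\sum a_i = 0$ with not all $a_i$ zero. The natural approach is to look at a zero of one of the factors with positive exponent. Pick $i_0$ with $a_{i_0} > 0$. The function $f(z) - \alpha_{i_0}$ has a zero at some $z_0 \in \h$ (since $f$ is non-constant and takes the value $\alpha_{i_0}$: its image omits at most... actually $j$ is surjective onto $\C$, so $f$ is surjective, hence $z_0$ exists). At $z_0$, the factor $(f(z) - \alpha_{i_0})^{a_{i_0}}$ vanishes to positive order $a_{i_0} \cdot \mathrm{ord}_{z_0}(f - \alpha_{i_0}) > 0$, while the other factors $(f(z) - \alpha_i)^{a_i}$ for $i \neq i_0$ are finite and nonzero at $z_0$ provided $f(z_0) \neq \alpha_i$; since the $\alpha_i$ are pairwise distinct and $f(z_0) = \alpha_{i_0}$, indeed $f(z_0) \neq \alpha_i$ for $i \neq i_0$. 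Hence the product has a genuine zero at $z_0$, contradicting that it equals the nonzero constant $c$. (Here one should note $a_{i_0}$ could a priori be negative for all terms that "see" $z_0$ — but we chose $i_0$ with $a_{i_0} > 0$; if instead all nonzero $a_i$ were negative, replace the relation by its reciprocal, which is also constant, reducing to the case of a positive exponent. And $\mathrm{ord}_{z_0}(f - \alpha_{i_0})$ is a positive integer since $f - \alpha_{i_0}$ is holomorphic and not identically zero.) This contradiction shows no nontrivial relation exists, completing the proof. I expect the cusp/asymptotic step to be the one requiring the most care to state cleanly, since one must control $f(z) - \alpha_i$ uniformly as $\im z \to \infty$; alternatively, one can bypass the asymptotics entirely and run only the zero-of-a-factor argument, provided one first observes that a constant Laurent-type product of the $(f - \alpha_i)$ over a domain where $f$ is surjective must have every factor's zeros cancel, which is exactly the divisor argument and already gives the contradiction directly — that streamlined version is probably what I would write.
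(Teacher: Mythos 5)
Your argument is correct, and your streamlined version is in substance identical to what the paper does: the paper's proof is the one-liner ``change variables so that $f=j$; the result is then immediate since $j$ is a transcendental function.'' Your zero-of-a-factor step is precisely what cashes out that transcendence claim: if $\prod_{i}(f(z)-\alpha_i)^{a_i}\equiv c$ with not all $a_i$ zero, then $c\neq 0$ (no $f-\alpha_i$ is identically zero); pick $i_0$ with $a_{i_0}>0$ (taking reciprocals if needed), use surjectivity of $f$ to find $z_0$ with $f(z_0)=\alpha_{i_0}$, and note that because the $\alpha_i$ are pairwise distinct every other factor is finite and non-zero at $z_0$, so the product vanishes there, contradiction. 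Two small remarks. First, your Step 1 (the cusp asymptotics forcing $\sum a_i=0$) is, as you yourself observe at the end, superfluous: Step 2 already delivers the contradiction whether or not $\sum a_i = 0$, so it should be cut; the asymptotic expansion you write is also slightly garbled (the factor $e^{-2\pi i rs/r}$ is a typo), though the magnitude estimate you extract from it is right. Second, you never need $\alpha_i$ to be singular moduli or even algebraic here, which matches the statement (arbitrary pairwise distinct complex numbers); that generality is exactly what lets the paper phrase this as ``$j$ is transcendental'' rather than appealing to anything arithmetic.
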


	\begin{proof}
		By changing variables, we may assume that $f(z) = j(z)$. The result is then immediate since $j$ is a transcendental function.
	\end{proof}
	
	Thus, a multiplicative special curve as in \eqref{eq:MSC} must have at least one of $f_1, \ldots, f_n$ non-constant.
	
	\begin{prop}\label{prop:multindconstsing}
		Let $n \in \Z_{>0}$. Let $f_1, \ldots, f_n$ be pairwise distinct non-constant $j$-maps. Let $\alpha$ be a singular modulus. Then the functions $h_i(z) = f_i(z) - \alpha$ are multiplicatively independent modulo constants. 
	\end{prop}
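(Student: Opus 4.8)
The plan is to reduce to the case $f_i(z) = j(g_i z)$ for matrices $g_i \in C(N_i)$ via a change of variables, and then exploit the fact that a nontrivial product $\prod_i (j(g_i z) - \alpha)^{a_i}$ being constant forces a functional relation among the $j(g_i z)$ that, combined with the Ax--Lindemann-type rigidity already implicit in the theory of modular functions, cannot hold unless all $a_i = 0$. More precisely, suppose for contradiction that $a_1, \ldots, a_n \in \Z$, not all zero, satisfy
\[ \prod_{i=1}^n (f_i(z) - \alpha)^{a_i} = c \]
for some $c \in \C^\times$ and all $z \in \h$. Separating the positive and negative exponents, this rewrites as an identity of rational functions in the $f_i(z)$, so $\alpha$ must be a zero or pole of the rational function obtained after clearing denominators; the content is to show no such multiplicative relation exists when the $f_i$ are pairwise distinct non-constant $j$-maps.

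First I would examine the behaviour of $h_i(z) = f_i(z) - \alpha$ near the cusp. Writing $f_i(z) = j(r_i z + s_i)$ with $r_i \in \Q_{>0}$, $0 \le s_i < 1$, the $q$-expansion of $j$ gives $f_i(z) = e^{-2\pi i (r_i z + s_i)} + 744 + O(e^{2\pi \im(r_i z)})$ as $\im z \to \infty$. Hence $h_i(z) \sim e^{-2\pi i s_i} e^{-2\pi i r_i z}$, and the asymptotic order of $h_i$ as $\im z \to \infty$ is governed entirely by $r_i$. If the $r_i$ were pairwise distinct, then taking $\im z \to \infty$ in $\prod_i h_i(z)^{a_i}$ and comparing exponential growth rates would force $\sum_i a_i r_i = 0$ only for the dominant term — in fact a more careful look at leading asymptotics shows the only way the product can be constant is if the dominant exponential contributions cancel, which by distinctness of the $r_i$ is impossible unless the relevant $a_i$ vanish. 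The cleaner route, which I expect the paper takes, is to use that each $h_i$ extends to a meromorphic (indeed holomorphic on $\h$) function with a known divisor: $f_i(z) = \alpha$ precisely at the $\SL$-translates of the point $\tau_i \in \mathfrak{F}_j$ with $j(\tau_i) = \alpha$, pulled back through $z \mapsto r_i z + s_i$; since $\alpha$ is a singular modulus, $\tau_i$ is imaginary quadratic, and one gets explicit control of these zeros.

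The key step is then a divisor-counting argument: if $\prod_i h_i(z)^{a_i}$ is constant, its divisor on (a suitable quotient of) $\h$ must vanish, so the zeros and poles of the $h_i$ cancel with multiplicity. But the zero sets of distinct non-constant $j$-maps $f_i = j(r_i z + s_i)$, $f_k = j(r_k z + s_k)$ at the value $\alpha$ are different discrete subsets of $\h$ — two such sets coincide only if $(r_i, s_i) = (r_k, s_k)$, i.e. $f_i = f_k$ — by the uniqueness statement for non-constant $j$-maps quoted at the start of Section~\ref{sec:jmap} together with Proposition~\ref{prop:eqcoords} (which pins down when $j(g_1 z) = j(g_2 z)$ can hold identically or on a large set). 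Since the $f_i$ are pairwise distinct, no two of these zero sets agree, so no cancellation among distinct indices is possible, and the only way the total divisor can vanish is $a_i = 0$ for all $i$. The main obstacle is making the divisor/cancellation argument rigorous across the different $j$-maps simultaneously — one has to pass to a common congruence subgroup $\Gamma \le \SL$ (e.g. $\Gamma(M)$ for suitable $M$) so that all the $f_i$ descend to meromorphic functions on the modular curve $X(\Gamma)$, verify the extension to the cusps is holomorphic and nonzero there using the $q$-expansion, and then apply the fact that a holomorphic function on a compact Riemann surface with no zeros is constant, which is vacuously contradicted by the presence of at least one genuine zero from some $h_i$ with $a_i \ne 0$. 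I would structure the proof exactly this way, with the lemmas on $j$-maps from Section~\ref{sec:jmap} doing the heavy lifting for the distinctness-of-zero-sets claim.
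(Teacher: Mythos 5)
Your approach is genuinely different from the paper's, but it has a real gap at the decisive step. The paper's proof is arithmetic: one evaluates the putative identity at $z_k = \sqrt{-k}$ for large $k$, notes that $j(z_k)$ is the unique coordinate of discriminant $-4k$, uses Siegel's lower bound (Proposition~\ref{prop:galbd}) to produce a nontrivial Galois conjugate of the tuple over $K = \Q(\alpha, c)$, divides the two resulting equations, and thereby manufactures infinitely many $m$-tuples ($2 \le m \le 2n$) of pairwise distinct singular moduli $w_i \ne \alpha$ with $\prod (w_i - \alpha)^{b_i} = 1$ --- contradicting Theorem~\ref{thm:fixedy}. So Proposition~\ref{prop:multindconstsing} is \emph{reduced} to the fixed-$y$ theorem rather than proved by a direct functional argument.

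Your divisor argument founders on the sentence ``Since the $f_i$ are pairwise distinct, no two of these zero sets agree, so no cancellation among distinct indices is possible.'' Distinctness of the zero sets does \emph{not} preclude cancellation: $\sum_i a_i \operatorname{div}(h_i) = 0$ can hold via partial overlaps even when no two $\operatorname{div}(h_i)$ coincide (think of four divisors supported on $\{A,B\}$, $\{A,C\}$, $\{B,D\}$, $\{C,D\}$ with alternating signs). What you actually need is a point $z_0 \in \h$ lying in the zero set of exactly one $h_i$ with $a_i \ne 0$, and this is exactly what the paper \emph{does} establish in the companion Proposition~\ref{prop:multindnoconst}, where $f$ is non-constant: there one can write down an explicit $z_0$ (essentially the fixed point of the involution $z \mapsto -1/(r_1 z + s_1)$) and use Proposition~\ref{prop:fundom} to rule out the other $f_i$'s hitting $f$ there. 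In your setting, with $\alpha$ a fixed singular modulus, the candidate zeros $z_0$ of $h_1$ lie on the quadratic orbit $\SL\,\tau_\alpha$, and by Proposition~\ref{prop:eqcoords} \emph{every} point in the intersection $\{h_1=0\} \cap \{h_j=0\}$ has $j(z_0)$ a singular modulus of bounded discriminant --- but so does every point of $\{h_1=0\}$ itself, so this proposition alone does not isolate a ``free'' zero. The asymptotic argument at the cusp suffers from the same deficiency (it yields only the single constraint $\sum_i a_i r_i = 0$ once $r_i$'s are allowed to repeat). You would need a genuinely new input to show the free zero exists, and I do not see how to supply it cheaply; the paper's detour through Galois conjugation and Theorem~\ref{thm:fixedy} is precisely what substitutes for this missing local analysis.
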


	\begin{proof}
		Suppose, for contradiction, that $c \in \C^\times$ and $a_1, \ldots, a_n \in \Z \setminus \{0\}$ are such that
		\begin{align}\label{eq:multindconstsing}
			\prod_{i=1}^n (f_i(z) - \alpha)^{a_i} = c
		\end{align}
		for all $z \in \h$. If $[\Q(z) : \Q]=2$, then $f_i(z)$ is a singular modulus and so $f_i(z) \in \alg$. Hence, $c \in \alg$. Let $K= \Q(\alpha, c)$.
		
		We may write $f_i(z) = j(r_i z + s_i)$ for some $r_i, s_i \in \Q$ with $r_i >0$, $s_i \in [0, 1)$, and the pairs $(r_i, s_i)$ all distinct. Re-indexing and making a change of variables, we may assume that $f_1(z) = j(z)$ and $r_i \geq 1$ for $i \geq 2$. 
		
		For $k \in \Z_{>0}$, let $z_k = \sqrt{-k}$. Then $j(z_k)$ is a singular modulus of discriminant $-4k$ and every preimage under $j$ of every singular modulus of discriminant $-4k$ has imaginary part $\leq \sqrt{k}$ with equality precisely at the preimages of $j(z_k)$ itself which have the form $z_k + l$ for $l \in \Z$. This follows from the characterisation of the preimages of the singular moduli of a given discriminant which follows Proposition~\ref{prop:galbd} and the properties of the fundamental domain in Proposition~\ref{prop:fundom}.
		
		For $i > 1$, we thus have that $f_i(z_k)$ is a singular modulus with discriminant not equal to $-4k$. Also, the $f_i(z_k)$ are all pairwise distinct, by Proposition~\ref{prop:fundom}, and, if $k$ is large enough, not equal to $\alpha$. One thus has that
		\begin{align*}\label{eq:fi1}
			(j(z_k) - \alpha)^{a_1} \prod_{i=2}^n (x_i - \alpha)^{a_i} = c
		\end{align*}
		for some singular moduli $x_2, \ldots, x_n$ of discriminants not equal to $4k$.
		
		For all $k$ large enough, Proposition~\ref{prop:galbd} implies that the tuple
		\[(j(z_k), x_2, \ldots, x_n)\]
		has some Galois conjugate over $K$ of the form
		\[(\beta, x_2', \ldots, x_n'),\]
		where $\beta \neq j(z_k)$. Note that
		\begin{align*}\label{eq:fi2}
			(\beta - \alpha)^{a_1} \prod_{i=2}^n (x_i' - \alpha)^{a_i} = c.
		\end{align*}
		Thus,
		\[(j(z_k) - \alpha)^{a_1} \prod_{i=2}^n (x_i - \alpha)^{a_i} = (\beta - \alpha)^{a_1} \prod_{i=2}^n (x_i' - \alpha)^{a_i}. \]
		The only singular moduli of discriminant $-4k$ in this relation are $j(z_k)$ and $\beta$, and they are distinct. Hence, at least the terms $(j(z_k) - \alpha)$ and $(\beta - \alpha)$ in the above relation do not cancel. 
		
		Grouping the terms where $x_i = x_k'$, which we then cancel if $a_i = a_k$, we obtain, for some $m \in \{2, \ldots, 2n\}$, an $m$-tuple 
		\[(j(z_k), \beta, y_1, \ldots, y_{m-2})\] 
		of singular moduli such that $j(z_k), \beta, y_1, \ldots, y_{m-2}, \alpha$ are pairwise distinct and
		\[(j(z_k) - \alpha)^{e_1} (\beta - \alpha)^{e_2}\prod_{i=1}^{m-2} (y_i - \alpha)^{e_{i+2}} =1\]
		for some $e_1, \ldots, e_m \in \Z \setminus \{0\}$. Further, the tuples that arise in this way for different $k$ are all distinct, since the $j(z_k)$ are all distinct.
		
		By the pigeonhole principle, there is thus some $m \in \{2, \ldots, 2n\}$ for which there exist infinitely many $m$-tuples $(w_1, \ldots, w_m)$ of singular moduli such that $w_1, \ldots, w_m, \alpha$ are pairwise distinct and 
		\[\prod_{i=1}^m (w_i - \alpha)^{b_i} = 1\]
		for some $b_1, \ldots, b_m \in \Z \setminus \{0\}$. This contradicts Theorem~\ref{thm:fixedy} and so we are done.
	\end{proof}
	
	Hence a multiplicative special curve as in \eqref{eq:MSC} must have $f$ non-constant.

	\begin{prop}\label{prop:multindnoconst}
		Let $n \in \Z_{>0}$. Let $f_1, \ldots, f_n, f$ be pairwise distinct, non-constant $j$-maps. Then the functions $h_1, \ldots, h_n \colon \h \to \C$ defined by $h_i(z) = f_i(z) - f(z)$ are multiplicatively independent modulo constants. 
	\end{prop}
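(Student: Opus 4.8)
The plan is to reduce to the case $f = j$ and then to show that the function
$\Phi(z) = \prod_{i=1}^n (f_i(z) - f(z))^{a_i}$, viewed as a rational function on a modular curve, cannot be constant, because its order of vanishing at a suitable point would be nonzero.

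\smallskip
\noindent\emph{Reduction.} Suppose, for contradiction, that $a_1,\dots,a_n \in \Z$, not all zero, and $c \in \C^\times$ satisfy $\prod_{i=1}^n (f_i(z)-f(z))^{a_i} = c$ for all $z \in \h$. Discarding the factors with $a_i = 0$, I may assume every $a_i \neq 0$. Making a change of variables (replacing $z$ by $g^{-1}z$, where $f(z) = j(gz)$) I may assume $f(z) = j(z)$; then by Proposition~\ref{prop:unique} each $f_i$ has the form $f_i(z) = j(g_i z)$ for a unique $g_i \in C(N_i)$, with $N_i \in \Z_{>1}$ (since $f_i \neq f$) and the $g_i$ pairwise distinct. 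A direct $q$-expansion computation shows that, for $h = \begin{pmatrix} a & b \\ 0 & d\end{pmatrix} \in C(N)$, the holomorphic function $j(hz)-j(z)$ on $\h$ is not identically zero and has a pole of order proportional to $\nu(h):=\max\{1,\,a/d\}$ at $i\infty$; note $\nu(\sigma_N) = N$ for $\sigma_N = \begin{pmatrix} N & 0 \\ 0 & 1\end{pmatrix}$, while every other element of $C(N)$ has $d \geq 2$ and hence $\nu \leq \max\{1,N/4\} < N$.

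\smallskip
\noindent\emph{Norm computation.} Since $\Phi = c$ lies in $\C \subseteq \C(j(z))$, I would apply $\mathrm{Norm}$ from the splitting field over $\C(j(z))$ of $\prod_i \Phi_{N_i}(X,j(z))$ down to $\C(j(z))$. Using that $\Phi_{N_i}(X,j(z))$ is irreducible over $\C(j(z))$ (so the conjugates of $j(g_iz)$ are exactly the $j(hz)$, $h \in C(N_i)$) together with $\prod_{h \in C(N)}(j(z)-j(hz)) = \Phi_N(j(z),j(z)) = F_N(j(z))$, this yields an identity
\[ \prod_{l} F_{N^{(l)}}(j(z))^{B_l} = \text{const}, \]
where $N^{(1)},\dots,N^{(r)}$ are the distinct values among $N_1,\dots,N_n$ and $B_l$ is a nonzero integer multiple of $\sum_{i:\,N_i = N^{(l)}} a_i$. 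By Corollary~\ref{cor:vanish} the left-hand side is a non-constant rational function of $j(z)$ unless every $B_l = 0$; hence $\sum_{i:\,N_i=N^{(l)}} a_i = 0$ for each $l$, so in particular each level $N^{(l)}$ is attained by at least two of the $g_i$ (the case $n=1$ already being excluded here, where instead the order of $\Phi$ at $i\infty$ is $-a_1\nu(g_1)\neq 0$).

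\smallskip
\noindent\emph{Orders at cusps.} Regarding $\Phi$ as a rational function on a modular curve finitely covering $X(1)$, constancy forces its order to vanish at every cusp. A cusp lying over $i\infty$ corresponds to some $\gamma \in \SL$; by Proposition~\ref{prop:jmaps}(1) one may write $j(g_i\gamma z) = j(h_i z)$ with $h_i = h_i^{(\gamma)} \in C(N_i)$, and since $j(\gamma z) = j(z)$ the vanishing of the order there reads
\[ \sum_{i=1}^n a_i\, \nu\bigl(h_i^{(\gamma)}\bigr) = 0 \qquad \text{for every } \gamma \in \SL. \]
Now $\SL$ acts transitively on each $C(N_i)$ by Proposition~\ref{prop:jmaps}(3), and $\sigma_{N}$ is the unique element of $C(N)$ attaining the maximal value $\nu = N$; choosing $\gamma$ to send a given $g_i$ to $\sigma_{N_i}$, and using the Chinese remainder factorisation $C(N) \cong \prod_{p\mid N} C(p^{e_p})$ to decouple the action at different primes, I expect the resulting family of linear relations, together with the relations $\sum_{i:\,N_i=N^{(l)}} a_i = 0$, to force each $a_i = 0$, contradicting our assumption.

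\smallskip
The hard part is this last step: when several of the $N_i$ coincide, or merely share a prime factor, a single $\gamma$ cannot isolate one index at a time, and one must track the cancellations carefully. The cleanest ways I see to organise it are either (i) an induction that peels off the largest level $N^{(r)}$ by taking a further norm down to the subfield generated over $\C(j(z))$ by the level-$N^{(r)}$ maps, or (ii) phrasing the cusp relations as a statement about the span of the $\SL$-translates of the function $\nu$ on $C(N) \cong \pr(\Z/N)$ and invoking the representation theory of $\mathrm{PGL}_2(\Z/N)$. In either case the only inputs are Propositions~\ref{prop:unique} and~\ref{prop:jmaps}, Corollary~\ref{cor:vanish}, and the elementary order-at-the-cusp computation above.
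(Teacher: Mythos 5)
Your approach is genuinely different from the paper's and is considerably heavier machinery, and as you yourself flag, it has a real gap at the decisive step. Let me describe both and pinpoint where your argument breaks off.

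Your reduction to $f = j$ and the passage to $f_i(z) = j(g_i z)$ with $g_i \in C(N_i)$ is sound, and so is the norm computation: taking the norm from the splitting field of the modular polynomials down to $\C(j(z))$ does turn a hypothetical constant relation into a relation $\prod_l F_{N^{(l)}}(j(z))^{B_l} = \text{const}$, and Corollary~\ref{cor:vanish} then forces $\sum_{i : N_i = N^{(l)}} a_i = 0$ for each level. The cusp-order bookkeeping via $\nu(h) = \max\{1, a/d\}$ and the observation that $\sigma_N$ uniquely attains $\nu = N$ within $C(N)$ are also correct. The problem is the final step: you need the family of linear relations $\sum_i a_i \nu(h_i^{(\gamma)}) = 0$ (one for each $\gamma \in \SL$), together with the per-level relations $\sum_{i : N_i = N^{(l)}} a_i = 0$, to force $a_i = 0$ for all $i$. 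You explicitly say you \emph{expect} this but do not prove it, and the proposed routes --- peeling off the largest level by a further norm, or invoking representation theory of $\mathrm{PGL}_2(\Z/N)$ --- are sketches, not arguments. In particular, when the levels $N_i$ share prime factors, the $\SL$-actions on the various $C(N_i)$ are not independent (they are linked through the reductions $\mathrm{SL}_2(\Z/\mathrm{lcm}) \to \mathrm{SL}_2(\Z/N_i)$), so one cannot simply isolate a single index by choosing $\gamma$. The Chinese-remainder decoupling you invoke only applies across coprime moduli, and even there one has to check that $\nu$ behaves well under the identification $C(N) \cong \pr(\Z/N)$. As written, the proof is incomplete.

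The paper's proof is far more elementary and bypasses all of this. After the same reduction to $f = j$, one writes $f_i(z) = j(r_i z + s_i)$, orders the pairs $(r_i, s_i)$ lexicographically, and simply exhibits an explicit point $z_0 \in \h$ (a fixed point of $z \mapsto -1/(r_1 z + s_1)$, constructed by hand in the two cases $r_1 \geq 1$ and $r_1 < 1$) at which $h_1$ vanishes but $h_2, \ldots, h_n$ do not, using only Proposition~\ref{prop:fundom} to control the $\SL$-orbit of $z_0$. Since a holomorphic function that vanishes at one point cannot be a nonzero constant, the product $\prod_i h_i(z)^{a_i}$ with $a_1 \neq 0$ cannot be constant. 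This is a one-page direct argument requiring no norms, no $q$-expansions, and no cusp combinatorics. If you want to salvage your approach, you need to actually prove that the cusp-order relations have only the trivial solution; in its current form the proposal stops at the point where the real work begins.
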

	
	\begin{proof}
		We will find some $z \in \h$ where precisely one of the functions $h_i$ vanishes. By a change of variables, we may assume that $f(z) = j(z)$ and $f_i(z) = j(r_i z + s_i)$ for some $r_i, s_i \in \Q$ such that $r_i > 0$ and $0 \leq s_i < 1$. Note that $(r_i, s_i) \neq (1, 0)$ since $f_i \neq f$. We may and do assume that the pairs $(r_i, s_i)$ are strictly increasing when ordered lexicographically.
		
		Suppose first that $r_1 \geq 1$. Let 
		\[ z_0 = - \frac{s_1}{2 r_1} + \frac{\sqrt{4r_1 - s_1^2}}{2 r_1} i,\]
		so that
		\[ \frac{-1}{r_1 z_0 + s_1} = z_0.\]
		Observe that $\lvert r_1 z_0 + s_1 \rvert = \sqrt{r_1} \geq 1$. If $r_1 > 1$, then $r_1 z_0 + s_1 \in \mathfrak{F}_j$. If $r_1 = 1$, then $s_1 > 0$ and $z_0$ is on the left hand side of the lower boundary of $\mathfrak{F}_j$ and $r_1 z_0 + s_1$ is the reflection of $z_0$ in the imaginary axis.
		
		Since $j(-1/z) = j(z)$, we have that $f_1(z_0) = f(z_0)$. If $r_i > r_1$, then, by Proposition~\ref{prop:fundom}, $\im (r_i z_0 + s_i) > \im (r_1 z_0 + s_1)$ and hence $j(r_i z_0 + s_i) \neq j(r_1 z_0 + s_1)$. If $r_i = r_1$ for $i \geq 2$, then $s_1 < s_i < 1$ and $j(r_i z_0 + s_i) \neq j(r_1 z_0 + s_1)$ by Proposition~\ref{prop:fundom} again. Thus, $f_i(z_0) = f(z_0)$ if and only if $i=1$ and we are done.
		
		Now suppose that $r_1 < 1$. Let $k \in \Z$ be such that $0 \leq kr_1 - s_1 < r_1$. Let 
		\[ z_1 = -\frac{k}{2} - \frac{s_1}{2 r_1} + \frac{\sqrt{4 r_1 - (k r_1 - s_1)^2}}{2 r_1} i,\]
		so that
		\[\frac{-1}{z_1+k} = r_1 z_1 + s_1.\]
		Hence, $f_1(z_1) = f(z_1)$. Observe also that $\lvert z_1 + k \rvert = 1/\sqrt{r_1}$ and $z_1 + k \in \mathfrak{F}_j \setminus \partial \mathfrak{F}_j$ and so $r_1 z_1 + s_1 \in S \mathfrak{F}_j \setminus \partial(S \mathfrak{F}_j)$, where $S$ denotes the transformation $z \mapsto -1/z$. Thus, by Proposition~\ref{prop:fundom}, the points in the $\SL$-orbit of $z_1$ with imaginary part $\geq \im (r_1 z_1 + s_1)$ are the elements of
		\[ \{z_1 + m : m \in \Z\} \cup \{r_1 z_1 + s_1 + l : l \in \Z\}.\]
		In particular, $r_i z_1 + s_i$ is not in the $\SL$-orbit of $z_1$ if $i \geq 2$, since $(r_i, s_i) \neq (1, 0)$. Hence, $f_i(z_1) = f(z_1)$ if and only if $i=1$. The proof is thus complete. 
	\end{proof}
	
	Therefore a multiplicative special curve as in \eqref{eq:MSC} must have some $f_i$ constant.
	
	\subsection{The shape of multiplicative special curves}\label{subsec:shape}
	
	\begin{proof}[Proof of Theorem~\ref{thm:multind}]
		Let $n \in \Z_{>0}$. Let $f_1, \ldots, f_n, f$ be pairwise distinct $j$-maps, at least one of which is non-constant. Suppose that $a_1, \ldots, a_n \in \Z \setminus \{0\}$ and $c \in \C^\times$ are such that
		\begin{align}\label{eq:multpf}
			\prod_{i=1}^n (f_i(z) - f(z))^{a_i} = c
		\end{align}
		for all $z \in \h$.
		
		By Proposition~\ref{prop:multindconstsing}, the $j$-map $f$ must be non-constant. Thus, by Proposition~\ref{prop:multind1map}, at least one of the $j$-maps $f_1, \ldots, f_n$ must be non-constant. By Proposition~\ref{prop:multindnoconst}, at least one of the $j$-maps $f_1, \ldots, f_n$ is constant. After relabelling, we thus have that
		\[ \prod_{i \in I_1} (f_i -f)^{a_i} \prod_{i \in I_2} (f_i - f)^{a_i} = c\]
		for all $z \in \h$, where the $j$-map $f$ is non-constant and $I_1, I_2$ are non-empty index sets such that $I_1 \cup I_2 = \{1, \ldots, n\}$ and the $j$-map $f_i$ is constant if $i \in I_1$ and non-constant if $i \in I_2$.
		
		By a change of variables, we may write $f(z) = j(z)$. For $i \in I_1$, let $\alpha_i$ be the singular modulus such that $f_i = \alpha_i$. Note that the $\alpha_i$ must be pairwise distinct, since the $f_i$ are. For $i \in I_2$, there is, by Proposition~\ref{prop:unique}, a unique $N_i \in \Z_{>0}$ and $g_i \in C(N_i)$ such that $f_i(z) = j(g_i z)$ and $N_i > 1$ since $f_i(z) \neq j(z)$. Rearrange to obtain that
		\begin{align}\label{eq:multpf2}
			c' \prod_{i \in I_1} (j(z) - \alpha_i)^{a_i} = \prod_{i \in I_2} (j(z) - j(g_i z))^{-a_i}
		\end{align}
		for all $z \in \h$, where
		\[ c' = \frac{(-1)^{a_1 + \ldots + a_n}}{c}.\] 
		We will show that the right hand side of \eqref{eq:multpf2} must be a product of powers of functions
		\[\prod_{g \in C(N_i)} (j(z) - j(gz)).\] 
		
		Rewrite the right hand side of \eqref{eq:multpf2} by grouping factors with the same $N_i$ to obtain that
		\begin{align}\label{eq:multpf3}
			c' \prod_{i \in I_1} (j(z) - \alpha_i)^{a_i} = \prod_{i \in I_3} \prod_{g \in S_i} (j(z) - j(g z))^{a_i(g)}
		\end{align}
		for all $z \in \h$, where $I_3$ is a new index set and, for each $i \in I_3$, $S_i \subset C(M_i)$ is non-empty, the $M_i \in \Z_{>1}$ are pairwise distinct, and the $a_i(g)$ belong to $\Z \setminus \{0\}$. We will show that, for each $i \in I_3$, we have that $S_i = C(M_i)$ and the $a_i(g)$ are equal for every $g \in C(M_i)$.
		
		Suppose then that there is $i_0 \in I_3$ with the property that there exist $g_0 \in S_{i_0}$ and $h_0 \in C(M_{i_0})$ such that either $h_0 \notin S_{i_0}$ or $h_0 \in S_{i_0}$ but $a_{i_0}(h_0) \neq a_{i_0}(g_0)$. By Proposition~\ref{prop:jmaps}, there exists $\gamma \in \SL$ such that $j(g_0 \gamma z) = j(h_0 z)$.
		
		Since the function $j(z)$ is invariant under the map $z \mapsto \gamma z$, we obtain from \eqref{eq:multpf3} that
		\begin{align}\label{eq:multpf4}
			\prod_{i \in I_3} \prod_{g \in S_i} (j(z) - j(g z))^{a_i(g)} = \prod_{i \in I_3} \prod_{g \in S_i } (j(z) - j(g \gamma z))^{a_i(g)}
		\end{align}
		for all $z \in \h$. Now, by Proposition~\ref{prop:jmaps}, the factor $j(z) - j(h_0 z)$ appears on the right hand side of \eqref{eq:multpf4} with exponent $a_{i_0}(g_0)$, and either does not appear on the left hand side (if $h_0 \notin S_{i_0}$) or appears on the left hand side with exponent equal to $a_{i_0}(h_0)$, which is not equal to $a_{i_0}(g_0)$, otherwise.
		
		The equation \eqref{eq:multpf4} thus implies that there exist $l \in \Z_{>0}$ and non-constant $j$-maps $f_1, \ldots, f_l, f$ with $f(z)=j(z)$ and $f_1(z) = j(h_0 z)$ such that the functions $v_i$ for $i=1, \ldots, l$ defined by $v_i(z) = f_i(z) - f(z)$ are multiplicatively dependent modulo constants. This though contradicts Proposition~\ref{prop:multindnoconst}.
		
		Therefore, in \eqref{eq:multpf3}, we must have, for each $i \in I_3$, that $S_i = C(M_i)$ and that the $a_i(g)$ are equal for every $g \in C(M_i)$. The right hand side of \eqref{eq:multpf3} may thus be rewritten to obtain that
		\begin{align}\label{eq:multpf5}
			c' \prod_{i \in I_1} (j(z) - \alpha_i)^{a_i} = \prod_{i \in I_3} \prod_{g \in C(M_i)} (j(z)-j(gz))^{b_i}
		\end{align}
		for all $z \in \h$, for some $b_i \in \Z \setminus \{0\}$, and pairwise distinct $M_i \in \Z_{>1}$. The right hand side is thus equal to the function
		\[\prod_{i \in I_3} F_{M_i}(j(z))^{b_i},\]
		the zeros and poles of which are thus equal to the $\alpha_i$ on the left hand side of \eqref{eq:multpf5}.
	\end{proof}
	
	\subsection{Determining the multiplicative special curves}
	
	We now complete the proof of Theorem~\ref{thm:MSC}. Let $n \in \Z_{>0}$. We will show that there are only finitely many multiplicative special curves in $\C^{n+1}$ and these may be determined effectively.
	
	\begin{proof}[Proof of Theorem~\ref{thm:MSC}]
		Suppose that
		\[T = \{(f_1(z), \ldots, f_n(z), f(z)) : z \in \h\}\]
		is a multiplicative special curve in $\C^{n+1}$. Then, by Theorem~\ref{thm:MSCshape}, we may reorder the first $n$ coordinates of $T$ in such a way that
		\[ T = \{(\alpha_1, \ldots, \alpha_m, j(g_1 z), \ldots, j(g_l z), j(z)) : z \in \h\},\]
		where 
		\begin{enumerate}
			\item $\alpha_1, \ldots, \alpha_m$ are pairwise distinct and such that 
			\[ \{\alpha_1, \ldots, \alpha_m\} = \{\alpha \in \C : \alpha \mbox{ is either a zero or a pole of } \prod_{i=1}^k F_{N_i}(X)^{b_i} \};\]
			\item $g_1, \ldots, g_l \in \GL$ are pairwise distinct and such that
			\[ \{g_1, \ldots, g_l\} = \bigcup_{i=1}^k C(N_i); \]
		\end{enumerate}
		for some $k \in \Z_{>0}$, $b_1, \ldots, b_k \in \Z \setminus \{0\}$, and pairwise distinct $N_1, \ldots, N_k \in \Z_{>1}$.
		In particular,
		\[ m+l = n.\] 
		Also,
		\[ l = \sum_{i=1}^k \# C(N_i).\]
		Since (\cite[p.~53]{Lang87})
		\[ \# C(N_i) = N_i \prod_{p \mid N_i} \Big(1 + \frac{1}{p}\Big),\]
		we have that $\#C(N_i) \geq N_i+1$.
		
		Corollary~\ref{cor:vanish} implies that
		\[\prod_{i=1}^k F_{N_i}(X)^{b_i}\] 
		is non-constant. Hence, $m \geq 1$. Thus we must have that
		\[\sum_{i=1}^k \# C(N_i) \leq n-1.\]
		So $\max \{N_1, \ldots, N_k\} \leq n-2$. Since $N_1, \ldots, N_k$ are pairwise distinct and $\geq 2$, we must have that
		\[\sum_{i=2}^{k+1} (i+1) \leq n-1.\]
		Thus
		\[ \frac{1}{2} k(k+5) \leq n-1,\]
		and hence
		\[ k \leq \frac{1}{2} (\sqrt{8n+17} - 5).\]
		In particular, there are only finitely many possibilities for $k, N_1, \ldots, N_k$ and these may be computed.
		
		Let $k, N_1, \ldots, N_k$ be such a possible choice for a multiplicative special curve in $\C^{n+1}$. Compute
		\[ l = \sum_{i=1}^k \# C(N_i).\]
		The corresponding polynomials $F_{N_i}$ may also be computed \cite[\S13B]{Cox89}. Let $\beta_1, \ldots, \beta_r$ be pairwise distinct and such that
		\[ \{\beta_1, \ldots, \beta_r\} = \{ \beta \in \C : F_{N_i}(\beta) = 0 \mbox{ for some } i=1,\ldots, k\}.\]
		Write $e_{i, u}$ for the multiplicity of $\beta_u$ as a root of $F_{N_i}$. Let $d_i$ be the leading coefficient of $F_{N_i}$. Note that $d_i \in \Z \setminus \{0\}$. Let $p_1, \ldots, p_t$ be a complete list of the prime factors of $d_1, \ldots, d_k$. Let $f_{i, v}$ be the exponent of $p_v$ occurring in the prime factorisation of $d_i$.
		
		The choice $k, N_1, \ldots, N_k$ then gives rise to a multiplicative special curve in $\C^{n+1}$ if and only if there exist $b_1, \ldots, b_k \in \Z \setminus \{0\}$ such that
		\[\sum_{i=1}^k b_i f_{i, v} = 0 \]
		for every $v \in \{1, \ldots, t\}$ and
		\[ \sum_{i=1}^k b_i e_{i, u} = 0\]
		for exactly $n-l$ choices of $u \in \{1, \ldots, r\}$. This condition may be checked effectively. Consequently, there are only finitely many multiplicative special curves in $\C^{n+1}$ and these may be determined effectively.
		
		Now suppose that $n \leq 5$. Then
		\[k \leq \frac{1}{2}(\sqrt{57} -5) < \frac{3}{2}.\]
		So $k=1$ is the only possibility. And
		\[N_1 \leq 3.\]
		So the only possible multiplicative special curves in $\C^{n+1}$ arise with $k =1 $ and $N_1 \in \{2, 3\}$. If $N_1 = 2$, then $l=3$ and so one needs $m \leq 2$, which is impossible since $F_2$ has three distinct roots (see Example~\ref{eg:mod}). If $N_1 = 3$, then $l=4$ and so one needs $m \leq 1$, but the polynomial
		\[F_3(X) = -X(X-8000)^2(X+32768)^2(X-54000)\]
		has four distinct roots. Thus, there are no multiplicative special curves in $\C^{n+1}$ for $n \in \{1, \ldots , 5\}$. 
	\end{proof}
	
	Finally, we remark that there does exist a multiplicative special curve in $\C^7$, namely that given in Example~\ref{eg:mod}.
	
	\section{Weakly special subvarieties and Ax--Lindemann}\label{sec:Ax}
	
	\subsection{Weakly special subvarieties}\label{subsec:wss}
	
	For the proof of Theorem~\ref{thm:main}, we will need the notion of (weakly) special subvarieties. Varieties and subvarieties are always irreducible over $\C$.
	
	\begin{definition}\label{def:specials}
		Let $m, n \in \Z_{>0}$.
		\begin{enumerate}
			\item A weakly special subvariety of $\C^m$ is an irreducible component of a subvariety of $\C^m$ defined by equations of the form $\Phi_N(x_i, x_k)=0$ and $x_l = c$ for $N \in \Z_{>0}$ and $c \in \C$.
			\item A special point of $\C^m$ is a point $(x_1, \ldots, x_m) \in \C^m$ such that $x_1, \ldots, x_m$ are singular moduli.
			\item A special subvariety of $\C^m$ is a weakly special subvariety of $\C^m$ which contains a special point of $\C^m$. Equivalently, a weakly special subvariety for which any constant coordinates are singular moduli.
			\item A weakly special subvariety of $(\C^\times)^n$ is a coset of a subtorus (i.e. a coset of an irreducible algebraic subgroup of $(\C^\times)^n$).
			\item A special point of $(\C^\times)^n$ is a point $(\zeta_1, \ldots, \zeta_n) \in (\C^\times)^n$ such that $\zeta_1, \ldots, \zeta_n$ are roots of unity.
			\item A special subvariety of $(\C^\times)^n$ is a weakly special subvariety of $(\C^\times)^n$ which contains a special point of $(\C^\times)^n$.
			\item A (weakly) special subvariety of $\C^m \times (\C^\times)^n$ is a product $M \times T$, where $M \subset \C^m$ is a (weakly) special subvariety of $\C^m$ and $T \subset (\C^\times)^n$ is a (weakly) special subvariety of $(\C^\times)^n$.
		\end{enumerate}
	\end{definition}
	
	It follows from this definition that a weakly special subvariety $T \subset (\C^\times)^n$ is defined by equations of the form
	\[t_1^{a_1} \cdots t_n^{a_n} = c\]
	for some $c \in \C^\times$ and $a_1, \ldots, a_n \in \Z$ not all zero. Also, $T$ is a special subvariety if and only if $T$ may be defined by equations of this kind with the additional property that every such $c$ is a root of unity. See, for example, \cite[Remark~1.0.1]{Zannier12}.
	
	It follows from the above description that special subvarieties of $\C^m$ and $(\C^\times)^n$ are defined over $\alg$. Special subvarieties of $\C^m$ have the following useful properties.
	
	\begin{prop}[{\cite[Proposition~2.1]{BiluLucaMasser17}}]\label{prop:specialsdense}
		Let $m \in \Z_{>0}$. Let $M \subset \C^m$ be a positive-dimensional special subvariety. Then $M$ contains a Zariski-dense union of special subvarieties of $\C^m$ of dimension $1$.
	\end{prop}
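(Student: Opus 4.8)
The plan is to argue by induction on $d = \dim M$. The base case $d = 1$ is trivial: $M$ is itself a $1$-dimensional special subvariety of $\C^m$, so we may take the "union" consisting of $M$ alone.

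For the inductive step, suppose $d \geq 2$. Since $M$ is positive-dimensional, at least one coordinate, say $x_1$, is non-constant on $M$, so the projection $p \mapsto x_1(p)$ maps $M$ onto an irreducible constructible subset of $\C$ which is not a single point and hence is cofinite in $\C$. As there are infinitely many singular moduli, infinitely many of them, say $\xi$, lie in this image. For such a $\xi$, the set $\{x_1 = \xi\}$ is a special subvariety of $\C^m$, and $M \cap \{x_1 = \xi\}$ is a non-empty proper closed subvariety of $M$; by the fibre-dimension theorem each of its irreducible components has dimension exactly $d - 1$. I would then argue that each such component $M_\xi$ is again a \emph{special} subvariety of $\C^m$. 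It is weakly special because components of intersections of weakly special subvarieties are weakly special --- concretely, one may use the standard parametrisation of $M$ (after permuting coordinates) as the Zariski closure of $\{(\varphi_1(z), \ldots, \varphi_m(z)) : z = (z_1, \ldots, z_d) \in \h^d\}$, where each $\varphi_i$ is either a fixed singular modulus or of the form $j(g_i z_{\kappa(i)})$ with $g_i \in \GL$, and then specialise the parameter $z_{\kappa(1)}$ to one of the CM points that $g_1$ sends to a $j$-preimage of $\xi$. And $M_\xi$ is special because all of its constant coordinates are singular moduli: $x_1$ itself equals $\xi$; the coordinates already constant on $M$ are singular moduli as $M$ is special; and any coordinate that newly becomes constant on $M_\xi$ is linked to $x_1$ by a chain of modular relations $\Phi_N(\,\cdot,\cdot) = 0$, so its value is a root of some $\Phi_N(\sigma, \cdot)$ with $\sigma$ a singular modulus, and such roots are singular moduli (being $j$-invariants of elliptic curves isogenous to one with complex multiplication).

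Distinct values of $\xi$ give disjoint slices $M_\xi$, since they lie in different fibres of $x_1$, so we obtain infinitely many pairwise distinct $(d-1)$-dimensional special subvarieties $M_\xi \subseteq M$. Their union is Zariski-dense in $M$: if it lay in a closed $Z \subsetneq M$, then $Z$ would have finitely many irreducible components, each of dimension at most $d - 1$, and each irreducible $M_\xi$ of dimension $d - 1$ would be contained in, hence equal to, one of them --- leaving only finitely many $M_\xi$, a contradiction. Finally, the inductive hypothesis applies to each $M_\xi$ (which has dimension $d - 1 \geq 1$) and yields a Zariski-dense union of $1$-dimensional special subvarieties of $\C^m$ inside $M_\xi$; the union over all admissible $\xi$ of all these curves is then a union of $1$-dimensional special subvarieties of $\C^m$ contained in $M$ whose Zariski closure contains every $M_\xi$, hence contains $\bigcup_\xi M_\xi$, and therefore equals $M$. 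This closes the induction.

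I expect the main obstacle to be the middle step: verifying that the slices $M_\xi$ are special --- not merely weakly special --- subvarieties of dimension $d-1$. This comes down to the standard facts that the class of (weakly) special subvarieties of $\C^m$ is stable under taking irreducible components of intersections and that solutions of modular equations evaluated at singular moduli are themselves singular moduli; the remainder of the argument is elementary dimension-counting together with the pigeonhole principle.
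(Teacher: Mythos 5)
The paper does not give its own proof of this proposition; it is cited verbatim from \cite{BiluLucaMasser17}. Your argument is correct and is essentially the standard inductive slicing proof (and, as far as I recall, the same in spirit as the one in \cite{BiluLucaMasser17}): pick a non-constant coordinate, slice at singular moduli, check the slices are special of one lower dimension, and induct, with Zariski-density coming from the pigeonhole observation that infinitely many pairwise-distinct $(d-1)$-dimensional subvarieties cannot all lie in a proper closed subset of an irreducible $d$-dimensional variety.

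Two small remarks that would streamline the write-up. First, the image of a non-constant coordinate $x_1$ on a special subvariety is in fact all of $\C$, not merely cofinite: after permuting coordinates, $x_1$ is of the form $j(g\, z_{\kappa(1)})$ with $g \in \GL$, and $z \mapsto j(gz)$ is a surjection $\h \to \C$, so every singular modulus is attained. Second, to see that a slice $M_\xi$ is \emph{special} (not just weakly special), it is cleaner to use the paper's Definition~\ref{def:specials}(3) directly: in the parametrisation you describe, fix $z_{\kappa(1)}$ at a CM point $\tau$ with $j(g_1\tau) = \xi$ and set all remaining free variables $z_\ell$ to CM points; the resulting tuple is a special point of $\C^m$ lying in $M_\xi$, so $M_\xi$ is special by definition. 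This avoids the chain-of-modular-relations argument, though that argument is also valid (a root of $\Phi_N(\sigma,\cdot)$ for $\sigma$ a singular modulus is a singular modulus, as you say, via isogenies of CM curves). The one ingredient you should flag as an input is the structure theorem for weakly special subvarieties of $\C^m$, namely that after permuting coordinates such an $M$ admits a parametrisation with each non-constant coordinate equal to $j(g_i z_{\kappa(i)})$; this is standard in the modular Andr\'e--Oort literature but is not proved in the present paper.
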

	
	\begin{prop}[{\cite[Proposition~2.3]{BiluLucaMasser17}}]\label{prop:jset}
		Let $m \in \Z_{>0}$. Let $M \subset \C^m$. Then $M$ is a special subvariety of dimension $1$ if and only if there exist $j$-maps $f_1, \ldots, f_n$, at least one of which is non-constant, such that
		\[M = \{(f_1(z), \ldots, f_n(z)) : z \in \h\}.\]
	\end{prop}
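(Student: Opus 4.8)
This statement is \cite[Proposition~2.3]{BiluLucaMasser17}; the plan below indicates how one can argue it directly.

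\emph{The ``if'' direction.} Suppose $M=\{(f_1(z),\dots,f_m(z)):z\in\h\}$ with the $f_i$ $j$-maps, at least one non-constant. First I would normalise: after a change of variables and a reordering we may take $f_1(z)=j(z)$, so by Proposition~\ref{prop:unique} each non-constant $f_i$ is $j(g_iz)$ for a unique $g_i\in C(N_i)$, and each constant $f_i$ is a singular modulus $\alpha_i$. Hence $M$ lies in the variety $Z\subseteq\C^m$ cut out by the equations $\Phi_{N_i}(x_1,x_i)=0$ and $x_i=\alpha_i$, which is of the shape allowed in Definition~\ref{def:specials}(1). Every irreducible component $W$ of $Z$ has $\dim W\le 1$: if $x_1$ is constant on $W$ then $W$ is a point, and otherwise each $x_i$ is algebraic over $\C(x_1)$. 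Since $x_1=j(z)$ is non-constant, $\dim\overline M\ge 1$, so the irreducible set $\overline M\subseteq Z$ is itself a $1$-dimensional component of $Z$, hence weakly special; and it is special because its only constant coordinates are the singular moduli $\alpha_i$. To see that $M$ is already Zariski-closed, so that $M=\overline M$, I would put $\Gamma=\SL\cap\bigcap_i g_i^{-1}\SL g_i$ (a finite-index subgroup), note that the parametrising map factors through $\Gamma\backslash\h$ and, its coordinates being modular functions for $\Gamma$, extends to a morphism $\Psi\colon X_\Gamma\to(\mathbb{P}^1)^m$ from the compact modular curve; since each of the finitely many cusps is sent to a point with every non-constant coordinate equal to $\infty$, one gets $M=\Psi(X_\Gamma)\cap\C^m$, which is closed.

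\emph{The ``only if'' direction.} Let $M$ be $1$-dimensional and special, an irreducible component of a variety $Z_0$ defined by $\Phi_N$-equations and equations $x_l=c_l$. Let $I_1$ be the set of coordinates constant on $M$ and $I_2$ the rest. No defining $\Phi_N$-relation joins $I_1$ to $I_2$ (that would force an $I_2$-coordinate to be constant), so $Z_0$ splits as a product over $I_1$ and $I_2$, whence $M=\{(\alpha_i)_{i\in I_1}\}\times M'$, where the $\alpha_i$ are singular moduli (as $M$ contains a special point) and $M'$ is a $1$-dimensional component, with no constant coordinate, of a variety $Z'$ cut out by $\Phi_N$-relations among the $I_2$-coordinates alone. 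If $|I_2|=1$ then $M'=\C=\{j(z):z\in\h\}$; otherwise the graph on $I_2$ with an edge per relation is connected, since a disconnection would exhibit $M'$ as a product with a point. Choose a spanning tree rooted at some $i_0$ and a smooth point $p=(c_i)_{i\in I_2}$ of $M'$ lying on no other component of $Z'$, at which $x_{i_0}$ is unramified and no coordinate takes a singular-modulus value; write $c_{i_0}=j(z_0)$. Near $p$, parametrise $M'$ holomorphically, and after composing with a local inverse of $j$ arrange that the $i_0$-coordinate is $j(z)$. Walking down the tree and using that the roots of $\Phi_N(j(w),Y)$ are exactly the $j(hw)$, $h\in C(N)$ — which stay pairwise distinct near $z_0$ by Proposition~\ref{prop:eqcoords}, as the coordinates at $p$ are not singular moduli — one continues the chosen branch to obtain $g_i\in\GL$ (with $g_{i_0}=\mathrm{Id}$) such that the $i$-th coordinate equals $j(g_iz)$ near $z_0$, for every $i\in I_2$. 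Since this parametrisation lies in $Z'$, each relation $\Phi_N(x_a,x_b)=0$ — including the non-tree ones — gives $\Phi_N(j(g_az),j(g_bz))=0$ near $z_0$, hence identically on $\h$. Thus $M^\sharp:=\{(j(g_iz))_{i\in I_2}:z\in\h\}\subseteq Z'$, and by the ``if'' direction $M^\sharp$ is a closed irreducible curve; it contains $p$, which lies on the single component $M'$ of $Z'$, so $M^\sharp=M'$. Reinstating the constant coordinates $\alpha_i$ as constant $j$-maps yields $M=\{(f_1(z),\dots,f_m(z)):z\in\h\}$ for suitable $j$-maps $f_i$, at least one non-constant because $\dim M=1$.

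The harder direction is the second, and the crux is to produce a \emph{single} uniformisation $z\mapsto(j(g_iz))_{i\in I_2}$ of the component $M'$ with the correct matrices $g_i$: the device above handles this by forcing $j(z)$ into one coordinate near a generic point, propagating the $j$-map structure along a spanning tree, and upgrading the resulting local identities to global ones, whereupon the dimension bound from the first direction pins down that $M^\sharp=M'$. In the first direction the only delicate point is that the image $M$ is Zariski-closed, which the passage to the compactified modular curve $X_\Gamma$ takes care of.
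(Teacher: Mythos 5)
The paper itself does not prove this proposition; it cites \cite[Proposition~2.3]{BiluLucaMasser17} directly, so there is no in-paper proof to compare against. Your sketch is a correct reconstruction of the standard argument, and it is essentially the same circle of ideas as in Bilu--Luca--Masser: for the ``if'' direction, normalise so that one coordinate is $j(z)$, observe that the image lies in an at-most-one-dimensional variety cut out by modular relations and constant conditions, and pass to the compactified modular curve $X_\Gamma$ (with $\Gamma=\SL\cap\bigcap_i g_i^{-1}\SL g_i$) to see that the image is Zariski-closed since every cusp is sent off to infinity; for the ``only if'' direction, split off the constant coordinates, use connectedness of the graph of modular relations on the remaining ones, and propagate a local parametrisation $x_{i_0}=j(z)$ along a spanning tree via the factorisation $\Phi_N(j(w),Y)=\prod_{h\in C(N)}(Y-j(hw))$, then upgrade to a global identity and use irreducibility to conclude $M'=M^\sharp$. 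A couple of spots you pass over quickly but which do hold: distinct $h_1,h_2\in C(N)$ satisfy $h_1\neq\lambda\gamma h_2$ (needed to invoke Proposition~\ref{prop:eqcoords}), which follows from Proposition~\ref{prop:unique}; and the locus on $M'$ where some coordinate is a singular modulus of the relevant bounded discriminant is finite, so a suitable $p$ exists. I see no gap. One small nit: the statement in the paper writes $f_1,\dots,f_n$ but clearly means $f_1,\dots,f_m$, as you implicitly corrected.
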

	
	In particular, a multiplicative special curve in $\C^{n+1}$ is a special subvariety of $\C^{n+1}$ of dimension $1$.
	
	\subsection{Ax--Lindemann}\label{subsec:Ax}
	
	Now we come to the functional transcendence result of Pila \cite{Pila11} which we will apply in the proof of Theorem~\ref{thm:main}. 
	
	For $m, n \in \Z_{>0}$, let
	\[ X = \C^m \times (\C^\times)^n\]
	and
	\[ U = \h^m \times \C^n.\]
	Define $e \colon \C \to \C^\times$ by $e(t) = \exp(2 \pi i t)$. Define $\pi \colon U \to X$ by
	\[\pi(z_1, \ldots, z_m, t_1, \ldots, t_n) = (j(z_1), \ldots, j(z_m), e(t_1), \ldots, e(t_n)).\]
	
	We make the following definitions.
	
	\begin{definition}[{\cite[Definition~6.1]{Pila11}}]\label{def:caccmpnt}
		Let $Z \subset U$ be a complex analytic subset. A complex algebraic component of $Z$ is a positive-dimensional connected component $Y \subset W \cap U$ for some algebraic subvariety $W \subset \C^{m+n}$ such that $Y \subset Z$. Here, $W \cap U$ is considered as a complex analytic set. A maximal complex algebraic component of $Z$ is a complex algebraic component of $Z$ which is not contained in any complex algebraic component of $Z$ of strictly larger dimension.
	\end{definition}
	
	\begin{definition}[{\cite[Definition~6.5]{Pila11}}]\label{def:prespecials}
		Let $m, n \in \Z_{>0}$.
		\begin{enumerate}
			\item A weakly special subvariety of $\h^m$ is (the intersection with $\h^m$ of) a subvariety defined by equations of the forms $z_i = g_{i, k} z_k$ and $z_l = c_l$ for some matrices $g_{i, k} \in \GL$ and constants $c_l \in \h$.
			\item A weakly special subvariety for $e$ of $\C^n$ is a subvariety of the form $b+L$ for some $b \in \C^n$ and linear subspace $L \subset \C^n$ defined over $\Q$.
			\item A weakly special subvariety of $U = \h^m \times \C^n$ is a product $M \times T$, where $M \subset \h^m$ is a weakly special subvariety of $\h^m$ and $T \subset \C^n$ is a weakly special subvariety for $e$ of $\C^n$.
		\end{enumerate}
	\end{definition}
	
	In particular, if $W$ is a weakly special subvariety of $U$, then $\pi(W)$ is a weakly special subvariety of $X$. The slightly cumbersome terminology of weakly special subvariety for $e$ of $\C^n$ is chosen to avoid confusion with the definition of a weakly special subvariety of $\C^n$ in Definition~\ref{def:specials}(1).
	
	The functional transcendence result we require is the following statement, which Pila calls an Ax--Lindemann result. Note that Pila \cite{Pila11} formulates his result with the ordinary complex exponential function $\exp \colon \C \to \C^\times$ in place of the function $e$, but this difference is of no consequence for our purposes. 
	
	\begin{thm}[{\cite[Theorem~6.8]{Pila11}}]\label{thm:AxL}
		Let $V \subset X$ be an algebraic subvariety. If $Y$ is a maximal complex algebraic component of $\pi^{-1}(V)$, then $Y$ is a weakly special subvariety of $U$.
	\end{thm}

	\section{The proof of Theorem~\ref{thm:main}}\label{sec:pf}
	
	We will prove Theorem~\ref{thm:main} by applying the so-called Pila--Zannier strategy of o-minimal point counting. This strategy was proposed by Zannier and was first used by Pila and Zannier \cite{PilaZannier08} to give a new proof of the Manin--Mumford conjecture. The approach used here is similar to that employed in \cite{Fowler21, PilaTsimerman17}. For background on o-minimality and on the Pila--Zannier method, see Pila's book \cite{Pila22}. 
	
	\subsection{The counting theorem for semirational points}\label{subsec:count}
	
	We will use an extension, due to Habegger and Pila \cite[Corollary~7.2]{HabeggerPila16}, of the Pila--Wilkie o-minimal counting theorem \cite{PilaWilkie06}. We will always work in the o-minimal structure $\RAE$; see \cite[p.~77]{Pila22} for details of this structure. Definable will mean definable with parameters in $\RAE$. Complex numbers, when considered as elements of definable sets, will be identified with their real and imaginary parts. Throughout this section, constants $c=c(\ldots)$ will be positive and have only the indicated dependencies.
	
	To state Habegger and Pila's result, we need to define the $k$-height of a real number. Let $k \in \Z_{>0}$. For $y \in \R$, define the $k$-height of $y$ by 
	\begin{align*}
		H_k(y) = \min \{ &\max \{\lvert a_0 \rvert, \ldots, \lvert a_k \rvert\} : a_0, \ldots, a_k \mbox{ are coprime integers, not all}\\ 
		&\mbox{zero, such that } a_k y^k + \ldots + a_0 = 0\},
	\end{align*}
	with the convention that $\min \emptyset = \infty$. Note that $y \in \R$ thus has $H_k(y) < \infty$ if and only if $[\Q(y) : \Q] \leq k$. For $y=(y_1, \ldots, y_n) \in \R^n$, define 
	\[H_k(y) = \max \{ H_k(y_1), \ldots, H_k(y_n) \}.\]
	The $k$-height is related to the multiplicative height in the following way.
	
	\begin{prop}\label{prop:hts}
		Let $d \in \Z_{>0}$. There exists a constant $c(d) > 0$ with the property that if $\alpha \in \alg$ is such that $[\Q(\alpha) : \Q] = d$, then
		\[H_d(\alpha) \leq c(d) H(\alpha)^d.\]
	\end{prop}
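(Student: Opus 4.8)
The plan is to compare the $d$-height $H_d(\alpha)$ of an algebraic number $\alpha$ of exact degree $d$ with the coefficients of its minimal polynomial, and then to bound those coefficients in terms of the multiplicative height $H(\alpha)$ via Mahler measure. First I would let $P(X) = a_d X^d + \ldots + a_0 \in \Z[X]$ be the minimal polynomial of $\alpha$ over $\Q$, taken primitive with $a_d > 0$, so that $a_0, \ldots, a_d$ are coprime integers, not all zero, and $P(\alpha) = 0$. By the definition of $H_d$ as a minimum over all such relations of degree $\leq d$, we immediately get
\[ H_d(\alpha) \leq \max\{\lvert a_0 \rvert, \ldots, \lvert a_d \rvert\} = \lVert P \rVert_\infty. \]
(Here one uses that $[\Q(\alpha):\Q] = d$ means no relation of degree strictly less than $d$ can hold, so the minimum is genuinely attained at the minimal polynomial, but in any case the inequality only goes one way and that is all we need.)

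Next I would invoke the standard comparison between the height (sup-norm) of the coefficients of a polynomial and its Mahler measure. Writing $M(P)$ for the Mahler measure of $P$, one has the elementary bound
\[ \lVert P \rVert_\infty \leq 2^d M(P) \]
(each coefficient is a signed sum of at most $\binom{d}{k} \leq 2^d$ products of roots, each of absolute value at most $\max\{1, \lvert \alpha_i \rvert\}$, times the leading coefficient $a_d$; more precisely $\lvert a_k \rvert \leq \binom{d}{k} M(P) \leq 2^d M(P)$). Then, by the very definition of the absolute multiplicative height of an algebraic number of degree $d$, namely $H(\alpha) = M(P)^{1/d}$ (see \cite[\S1.5]{BombieriGubler06}), we have $M(P) = H(\alpha)^d$. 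Combining the three displayed inequalities gives
\[ H_d(\alpha) \leq \lVert P \rVert_\infty \leq 2^d M(P) = 2^d H(\alpha)^d, \]
so the proposition holds with $c(d) = 2^d$.

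There is no real obstacle here; the only points requiring a little care are bookkeeping ones. One must make sure the primitive integer minimal polynomial is exactly the object over which the minimum defining $H_d$ ranges (coprime integer coefficients, not all zero, of degree $d$), so that the first inequality is legitimate; and one must quote the normalisation of $H(\alpha)$ and of the Mahler measure consistently with the reference \cite{BombieriGubler06} — in particular the relation $H(\alpha)^{[\Q(\alpha):\Q]} = M(P_\alpha)$ for the primitive minimal polynomial $P_\alpha$. If one prefers to avoid Mahler measure altogether, an alternative is to bound $\lvert a_k \rvert$ directly: $\lvert a_d \rvert \leq \prod_v \max\{1,\lvert\alpha\rvert_v\}^{\ldots}$-type estimates show $\lvert a_d \rvert \leq H(\alpha)^d$ and each elementary symmetric function of the conjugates contributes a further factor bounded by $\binom{d}{k} H(\alpha)^d$, again yielding $c(d) = 2^d$. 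Either way the constant is effective and depends only on $d$, as required.
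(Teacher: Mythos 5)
Your proof is correct and takes essentially the same approach as the paper: bound $H_d(\alpha)$ by the sup-norm of the integer minimal polynomial, bound that by a binomial-type constant times the Mahler measure, and then use $M(P) = H(\alpha)^d$. The paper simply cites Lemma 1.6.7 and Proposition 1.6.6 of Bombieri--Gubler for the middle two steps and takes $c(d) = \binom{d}{\lfloor d/2\rfloor}$, whereas you re-derive the coefficient bound inline with the marginally weaker constant $2^d$; the difference is immaterial.
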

	
	\begin{proof}
		Let 
		\[c(d) = {d \choose \lfloor d/2 \rfloor}.\]
		Suppose that
		\[f(t) = a_d t^d + \ldots + a_0\]
		is a minimal polynomial over $\Z$ of some $\alpha \in \alg$. Then
		\[		H_d(\alpha) \leq \max \{\lvert a_0 \rvert, \ldots, \lvert a_d \rvert \}
		\leq c(d) M(f)
		= c(d) H(\alpha)^d,\]
		where the second inequality is \cite[Lemma~1.6.7]{BombieriGubler06} and the final equality is \cite[Proposition~1.6.6]{BombieriGubler06}. Here $M(f)$ denotes the Mahler measure of $f$.
	\end{proof}
	
	Habegger and Pila's point counting result is the following.

	\begin{thm}[{\cite[Corollary~7.2]{HabeggerPila16}}]\label{thm:count}
		Let $F \subset \R^l \times \R^m \times \R^n$ be a definable family parametrised by $\R^l$. Let $\epsilon > 0$ and $k \in \Z_{>0}$. Let $\pi_1 \colon \R^m \times \R^n \to \R^m$ and $\pi_2 \colon \R^m \times \R^n \to \R^n$ be the projection maps. Then there exists a constant $c = c(F, k, \epsilon) > 0$ with the following property.
		
		Let $x \in \R^l$. Write $F_x \subset \R^m \times \R^n$ for the fibre of $F$ over $x$. If $T \geq 1$ and 
		\[\Sigma \subset \{(y, z) \in F_x : H_k(y) \leq T\}\]
		is such that $\# \pi_2(\Sigma) > cT^\epsilon$, then there exists a continuous, definable function $\beta \colon [0, 1] \to F_x$ such that:
		\begin{enumerate}
			\item The composition $\pi_1 \circ \beta \colon [0, 1] \to \R^m$ is semialgebraic and its restriction to $(0, 1)$ is real analytic.
			\item The composition $\pi_2 \circ \beta \colon [0, 1] \to \R^n$ is non-constant.
			\item $\pi_2(\beta(0)) \in \pi_2(\Sigma)$.
			\item The restriction of $\beta$ to $(0, 1)$ is real analytic.
		\end{enumerate}
	\end{thm}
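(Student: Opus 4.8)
Since Theorem~\ref{thm:count} is quoted verbatim from \cite[Corollary~7.2]{HabeggerPila16}, the plan is only to indicate how such a counting statement is obtained; the engine is the Pila--Wilkie theorem \cite{PilaWilkie06} together with its block refinements and a fibred parametrization, all available in the o-minimal structure $\RAE$.

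First I would pass from $k$-height to honest rational-point height. The condition $H_k(y) \le T$ says that each coordinate $y_i$ of $y$ satisfies an integer polynomial of degree $\le k$ with all coefficients bounded by $T$; recording $y$ together with a normalised vector of such coefficients realises $y$ as a point lying over a genuine rational point of height $\le T$, i.e. as a \emph{semirational} point, only part of whose coordinate data is rational. Replacing $F$ by the definable set carrying these coefficient vectors as extra coordinates (a definable operation, uniform in the family parameter, that is finite-to-one over the new rational coordinates), the task becomes: a definable family, together with a set of points some of whose coordinates are rationals of height $\le T$ and whose $\pi_2$-projection has more than $cT^\epsilon$ elements, must contain a semialgebraic arc through one of those points. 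Proposition~\ref{prop:hts} is the bookkeeping that makes the passage between $k$-height and multiplicative (hence naive) height lossless up to a fixed power of $T$.

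Next I would invoke the block form of Pila--Wilkie. Pila--Wilkie bounds by $O_\epsilon(T^\epsilon)$ the rational points of height $\le T$ on the transcendental part of a definable set; Pila's block refinement upgrades this to: all but $O_\epsilon(T^\epsilon)$ of those points lie on a uniformly bounded number of connected, positive-dimensional semialgebraic blocks contained in the set, and in a definable family both the bound and the blocks themselves vary definably in the parameter. Choosing $c = c(F,k,\epsilon)$ above the implied constant, the hypothesis $\# \pi_2(\Sigma) > cT^\epsilon$ then forces a point of (the transformed) $\Sigma$ onto such a block $B \subseteq F_x$, and $B$ must be non-constant along $\pi_2$, for otherwise $\pi_2(B)$ is a single point and finitely many blocks could not account for more than $cT^\epsilon$ of the projected points. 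Finally I would cut out a real-analytic cell inside $B$ on which $\pi_1$ is semialgebraic (cell decomposition applied to $\pi_1|_B$, which is automatic once $B$ is semialgebraic), and reparametrise it by $[0,1]$ with the chosen point of $\Sigma$ placed at $\beta(0)$, producing $\beta$ with properties (1)--(4).

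The hard part is the uniformity in the parameter $x \in \R^l$: one needs the constant $c$, the number of blocks, and the block data to depend only on $F$ and on $k,\epsilon$, and not on $x$. This is exactly what the fibred $C^r$-parametrization theorem for definable families and the definability of block families provide, and it is the main content of \cite{HabeggerPila16}; granting it, the extraction of the single arc $\beta$ with the stated analyticity and endpoint conditions is routine.
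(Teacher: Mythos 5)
The paper does not actually prove this statement: it is quoted verbatim as \cite[Corollary~7.2]{HabeggerPila16}, and the only proof-adjacent remark added is that the constant $c$ is ineffective and that item~(4) follows from the fact that $\RAE$ admits analytic cell decomposition (\cite[Theorem~8.8]{vandenDriesMiller94}). You read the situation correctly, and your high-level sketch of the Pila--Wilkie machinery behind Habegger and Pila's result points in the right direction (passage from $k$-height to bounded-height coefficient vectors; determinant-method/parametrization counting; uniformity across the family; reparametrization to a single arc). You also correctly flag analytic cell decomposition as the source of real analyticity on $(0,1)$, which is exactly what the paper says about item~(4).

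That said, your sketch has a few inaccuracies worth flagging. First, after recording the coefficient vectors as new coordinates the problem is \emph{still} semirational --- some coordinates are rational of bounded height, the others (including the $\pi_2$-coordinates) remain arbitrary --- so you have not reduced to a situation the ordinary rational-point block theorem handles; upgrading Pila--Wilkie to this semirational, projection-counting setting is precisely the main technical content of \cite{HabeggerPila16}, and the sentence ``the task becomes\dots'' is restating the theorem after the reduction rather than proving it. Second, Pila's block theorem places the points of $k$-height $\le T$ on at most $O_\epsilon(T^\epsilon)$ blocks, not a \emph{uniformly bounded} number of blocks; the pigeonhole that produces a block with non-constant $\pi_2$-projection still goes through, but only after enlarging $c$ to dominate both the transcendental-part bound and the block-count bound. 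Third, Proposition~\ref{prop:hts} is not part of the proof of this theorem: in the paper it appears in Section~\ref{sec:pf}, where it is used to bound the $2$-height of the preimages $\tau_i$ of singular moduli in the course of \emph{applying} Theorem~\ref{thm:count}, and it has no role in establishing the counting theorem itself.
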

	
	The constant $c = c(F, k, \epsilon)$ here is not effective. For (4), we use the fact that $\RAE$ admits analytic cell decomposition \cite[Theorem~8.8]{vandenDriesMiller94}.
	
	We will also require the following bound on the size of the exponents in a multiplicative dependency.
	
	\begin{prop}\label{prop:nonminexpbd}
		Let $n \in \Z_{>0}$. There exist constants $c_1(n), c_2(n)>0$ with the following property. Let $L$ be a number field and $d= [L : \Q]$. If $\alpha_1, \ldots, \alpha_n \in L^\times$ are pairwise distinct and such that
		\[ \prod_{i=1}^n \alpha_i^{a_i} = 1\]
		for some $a_1, \ldots, a_n \in \Z \setminus \{0\}$, then there exist $b_1, \ldots, b_n \in \Z \setminus \{0\}$ such that
		\[ \prod_{i=1}^n \alpha_i^{b_i} = 1\]
		and, for every $i \in \{1, \ldots, n\}$,
		\[\lvert b_i \rvert \leq c_1(n) (d \max \{1, h(\alpha_1), \ldots, h(\alpha_n)\})^{c_2(n)}.\]
	\end{prop}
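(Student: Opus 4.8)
The plan is to reduce to the torsion-free part of the unit group generated by $\alpha_1, \dots, \alpha_n$ and then apply a Siegel-lemma/geometry-of-numbers argument. First, I would consider the subgroup $\Gamma \leq L^\times$ generated by $\alpha_1, \dots, \alpha_n$. Its rank $r$ is at most $n$, and the kernel of the map $\Z^n \to \Gamma$, $(c_1, \dots, c_n) \mapsto \prod \alpha_i^{c_i}$, is a sublattice $\Lambda \subseteq \Z^n$ of rank $n - r \geq 1$ (it is nonzero by hypothesis). The goal is to produce a vector $b = (b_1, \dots, b_n) \in \Lambda$ with all coordinates nonzero and all coordinates bounded polynomially in $d$ and $H := \max\{1, h(\alpha_1), \dots, h(\alpha_n)\}$.

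The key ingredient is a height bound on a basis of $\Lambda$. One embeds $\Gamma$ into $\R^{s}$ (for $s$ the number of archimedean places of $L$, so $s \leq d$) via the standard logarithmic map $\ell(\alpha) = (\log|\alpha|_v)_v$, together with the valuation vectors at the finitely many finite places dividing some $\alpha_i$; the lattice $\Lambda$ is then the kernel of a linear map $\Z^n \to \R^N$ whose matrix entries are bounded in terms of $d$ and $H$ (each $\log|\alpha_i|_v$ and each valuation $v(\alpha_i)$ is $O(dH)$ by standard height estimates, e.g. as in \cite[\S1.5, \S1.6]{BombieriGubler06}). A standard geometry-of-numbers bound (Siegel's lemma, or the elementary argument bounding a basis of the kernel lattice by the maximal absolute value of the minors) then gives a basis $v^{(1)}, \dots, v^{(n-r)}$ of $\Lambda$ with $\|v^{(p)}\|_\infty \leq c(n)(dH)^{c'(n)}$.

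It remains to pass from this basis to a single vector $b \in \Lambda$ with \emph{all} coordinates nonzero, at the cost of only a further bounded factor. For this I would take $b = \sum_{p=1}^{n-r} m_p v^{(p)}$ for suitable integers $m_p$: the set of $(m_1, \dots, m_{n-r})$ for which some coordinate of $b$ vanishes is contained in a union of at most $n$ proper hyperplanes of $\R^{n-r}$, so by pigeonhole there is a choice with $|m_p| \leq n$ (or at most $n+1$) avoiding all of them; then $\|b\|_\infty \leq (n-r)(n+1)\max_p \|v^{(p)}\|_\infty$, still polynomial in $d$ and $H$. Absorbing all the combinatorial constants into $c_1(n)$ and the exponent into $c_2(n)$ gives the claim.

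The main obstacle I expect is making the height bound on the kernel lattice $\Lambda$ genuinely uniform and explicit: one must be careful that the linear system cutting out $\Lambda$ involves only places where some $\alpha_i$ is non-unit, that the number of such places and the sizes of the relevant valuations and archimedean logarithms are all controlled by $d$ and $H$ (which follows from $\sum_v \max\{0, \log|\alpha_i|_v\} \leq d\,h(\alpha_i)$ and the analogous lower bound), and then that the Siegel-lemma bound on the kernel basis has the correct shape. The final pigeonhole step to clear zero coordinates is routine once the basis bound is in hand.
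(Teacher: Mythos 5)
Your approach is genuinely different from the paper's. The paper does not touch the relation lattice $\Lambda$ directly: it reduces to minimally multiplicatively dependent subsets $S_i \ni \alpha_i$ (Lemma~\ref{lem:expbd}, a cited result of Yu via Loher--Masser, which handles the minimal case), producing vectors $v_i$ with $v_{i,i}\neq 0$, and then combines them into a single all-nonzero vector by iterating the elementary Lemma~\ref{lem:vect}. Your final step --- choosing integers $m_p$ so that $\sum_p m_p v^{(p)}$ avoids the at most $n$ coordinate hyperplanes of $\Lambda\otimes\R$, which are proper since the given vector $(a_1,\dots,a_n)\in\Lambda$ has all coordinates nonzero --- is correct and arguably cleaner than the paper's inductive gadget, and buys the same thing.

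However, there is a genuine gap in the middle step. You want to bound a basis of $\Lambda$ (or of the slightly larger lattice $\Lambda'=\{c\in\Z^n : \prod_i\alpha_i^{c_i}\ \text{is a root of unity}\}$, the true kernel of the place-wise map) by a ``standard geometry-of-numbers bound'' from the fact that the matrix entries $\log|\alpha_i|_v$ and $v(\alpha_i)$ are bounded. For the non-archimedean block this is fine, since those entries are integers and Siegel's lemma over $\Z$ applies. But the archimedean block has \emph{real} entries, and the kernel lattice of a real linear map is not controlled by the size of its entries: a matrix with entries of size $O(1)$ can have an integer kernel generated only by very long vectors, because the kernel is sensitive to the exact ratios of the entries, not to their magnitudes. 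Concretely, take $\alpha_1=\beta^{N}$, $\alpha_2=\beta^{N+1}$ for a unit $\beta$; the relation lattice is generated by $(N+1,-N)$, while the archimedean entries $\log|\alpha_i|_v$ are of order $N\cdot h(\beta)$, and nothing in their sizes alone reveals $N$. To bound $N$ polynomially in $d$ and $H=\max h(\alpha_i)$ one must use $H\geq (N+1)h(\beta)$ together with a \emph{lower} bound on $h(\beta)$, i.e.\ a Lehmer/Dobrowolski-type estimate for non-torsion algebraic numbers of degree $\leq d$. That Diophantine input is exactly what Yu's theorem supplies and what the paper imports wholesale by citing \cite[Corollary~3.2]{LoherMasser04}. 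As written, your plan attributes this step to Siegel's lemma, which is not sufficient; you would need to insert an explicit appeal to a Lehmer-type lower bound (or equivalently build a ``small basis'' of $\Gamma/\text{tors}$ first) before the geometry-of-numbers argument can deliver the claimed polynomial bound.
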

	
	This will follow from the following bound, which covers the case where the multiplicative dependency is minimal.
	
	\begin{lem}\label{lem:expbd}
		Let $n \in \Z_{>0}$. There exists an explicit constant $c(n)>0$ with the following property. Let $L$ be a number field and $d= [L : \Q]$. If $\alpha_1, \ldots, \alpha_n \in L^\times$ are pairwise distinct and the set $\{\alpha_1, \ldots, \alpha_n\}$ is minimally multiplicatively dependent, then there exist $b_1, \ldots, b_n \in \Z \setminus \{0\}$ such that
		\[ \prod_{i=1}^n \alpha_i^{b_i} = 1\]
		and
		\[\lvert b_i \rvert \leq c(n) d^{n+1} (1+ \log d) \prod_{\substack{k=1\\ k \neq i}}^n h(\alpha_k).\]
	\end{lem}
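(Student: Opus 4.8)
The plan is to reduce Lemma~\ref{lem:expbd} to a statement about the lattice of multiplicative relations among $\alpha_1, \ldots, \alpha_n$ and then apply a Siegel-type height bound on short vectors in this lattice. Concretely, consider the group homomorphism $\phi \colon \Z^n \to L^\times$ given by $(c_1, \ldots, c_n) \mapsto \prod_{i=1}^n \alpha_i^{c_i}$, and let $\Lambda = \ker \phi$ be the lattice of multiplicative relations. Minimal multiplicative dependence of $\{\alpha_1, \ldots, \alpha_n\}$ means precisely that every non-zero vector in $\Lambda$ has all coordinates non-zero; in particular $\Lambda$ has rank exactly $1$ (if it had rank $\geq 2$, one could take a difference of two independent relations supported so as to produce a non-zero relation with a zero coordinate, contradicting minimality). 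So $\Lambda = \Z (b_1, \ldots, b_n)$ for a primitive vector $(b_1, \ldots, b_n)$ with all $b_i \neq 0$, and the task is to bound $\lvert b_i \rvert$ in terms of $d$ and the heights $h(\alpha_k)$.

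The key input is an effective bound on the height of a generator of $\Lambda$; this is classical and goes back to work in transcendence theory (Baker--W\"ustholz style bounds, or the more elementary geometry-of-numbers argument for the unit/relation lattice found in, e.g., Bombieri--Gubler or Masser's work). The shape of the bound one expects is: there is a non-zero $(b_1, \ldots, b_n) \in \Lambda$ with
\[ \max_i \lvert b_i \rvert \leq c(n)\, d^{n+1}(1 + \log d) \prod_{k=1}^n \max\{1, h(\alpha_k)\}, \]
possibly with the $i$-th term omitted from the product as in the statement. The mechanism is a pigeonhole/convex-body argument: the images $\prod \alpha_k^{c_k}$ for $\lvert c_k \rvert \leq B$ all have logarithmic (Weil) height at most $B \sum_k h(\alpha_k)$; there are $\sim (2B+1)^n$ such tuples; and an algebraic number of degree $\leq d$ and height $\leq h_0$ lies in a set of size $\lesssim (\text{something polynomial in } d, e^{h_0})$ only after a quantitative Northcott-type count, so for $B$ suitably large (of the stated order) two tuples coincide, producing a relation. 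One then has to convert this ``exists a short relation'' into ``the generator is short,'' which follows since any relation is an integer multiple of the generator, so the generator is no longer than the shortest non-zero relation found.

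The main obstacle, and the step requiring genuine care, is obtaining the $d$-dependence with the correct exponent $n+1$ and the logarithmic factor $(1+\log d)$: a naive Northcott count gives a much worse (exponential-in-$h_0$) bound, so one must instead use the structure of $L^\times$ more carefully — passing to the finitely generated subgroup $\Gamma = \langle \alpha_1, \ldots, \alpha_n \rangle$, bounding its rank by $r \leq n$, and applying a bound on the covolume / successive minima of $\Gamma$ modulo torsion, which is where the $d^{n+1}$ and $\log d$ enter (essentially from bounds on the regulator and on $\#\mu(L)$, the roots of unity in $L$, the latter being $\leq c\, d \log \log d$ or so). I would cite the relevant quantitative result — the cleanest reference being a lemma of the type in Bombieri--Gubler \cite{BombieriGubler06} or the explicit bounds in work on ``small'' multiplicative relations — and check that its hypotheses (pairwise distinctness of the $\alpha_i$, which rules out torsion-induced degeneracies after the minimality reduction) are met here. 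The remaining bookkeeping — tracking that the $i$-th factor can be dropped from the product, and that $c(n)$ is explicit — is routine once the rank-one reduction and the short-vector bound are in place.
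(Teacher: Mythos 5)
Your structural reading is right: the crux of the lemma is an effective bound on the size of the smallest nonzero relation, and the paper's proof is, for $n\geq 2$, precisely a citation — to a theorem of Yu, stated as \cite[Corollary~3.2]{LoherMasser04} — with a slight cosmetic weakening ($d^n\log d$ replaced by $d^{n+1}(1+\log d)$) to make the bound uniform over all $d$ and $n$. Your rank-one observation about the relation lattice $\Lambda$ is correct and is exactly what minimality encodes, though it is not strictly needed once one invokes Yu's result, which is tailored to minimally multiplicatively dependent sets.

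Where the proposal falls short is in the step you yourself flag as the "main obstacle." Both of the mechanisms you sketch are the wrong kind of argument for this bound. A pigeonhole/Northcott count, as you note, gives exponential dependence. The covolume/successive-minima approach for $\Gamma=\langle\alpha_1,\dots,\alpha_n\rangle$ modulo torsion can produce a short relation, but the natural bound it yields is in terms of $\max_k h(\alpha_k)$ raised to a power, not the \emph{product} $\prod_{k\neq i}h(\alpha_k)$ appearing in the lemma; that product structure (together with the clean $d^{n+1}\log d$ factor) is the hallmark of Baker-type lower bounds for linear forms in logarithms, which is how Yu's theorem is actually obtained. So "I would cite the relevant quantitative result" is doing all the work in your proof, you have not identified the right citation (Bombieri--Gubler does not contain a statement of this precise form), and your two suggested fall-back derivations would not produce the stated bound. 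You also omit the case $n=1$, which the paper treats separately: there $\alpha_1$ is a root of unity, its order $N$ satisfies $\phi(N)\leq d$, and the elementary inequality $\phi(N)\geq\sqrt{N/2}$ gives $N\leq 2d^2$, which fits into the claimed bound with the empty product equal to $1$.
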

	
	\begin{proof}
		If $n=1$, then $\alpha_1$ is a root of unity of degree $\leq d$. Hence, $\alpha_1$ is a primitive $N$th root of unity for some $N$ with $\phi(N) \leq d$, where $\phi$ denotes Euler's totient function. The desired result then follows from the elementary bound 
		\[\phi(N) \geq \sqrt{\frac{N}{2}}.\]

		For $n \geq 2$, this is a result of Yu \cite[Corollary~3.2]{LoherMasser04}. The version stated in \cite{LoherMasser04} has $d^n \log d$ in place of the $d^{n+1} (1 + \log d)$ here; the slight weakening here allows one to state a uniform result for all $d, n \geq 1$ which still suffices for our purposes.
	\end{proof}
	
	Proposition~\ref{prop:nonminexpbd} follows from Lemma~\ref{lem:expbd} via the following elementary lemma.
	
	\begin{lem}\label{lem:vect}
		Let $n \in \Z_{>1}$. Let $v, w \in \Z^n$. Suppose that, for some $k \in \{1, \ldots, n-1\}$, we have that
		\[ v = (v_1, \ldots, v_k, 0, \ldots, 0),\]
		where $v_1, \ldots, v_k \neq 0$, and that
		\[ w = (w_1, \ldots, w_n)\]
		with $w_{k+1} \neq 0$. Let 
		\[\lambda = 1+ \max\{ \lvert v_1 \rvert, \ldots, \lvert v_k \rvert, \lvert w_1 \rvert, \ldots, \lvert w_n \rvert \}.\]
		Let
		\[ u = v + \lambda w,\]
		and write 
		\[ u = (u_1, \ldots, u_n). \]
		Then $u_1, \ldots, u_{k+1} \neq 0$ and
		\[ \lvert u_i \rvert \leq 2 \lambda^2 \mbox{ for } i=1, \ldots, n.\]
	\end{lem}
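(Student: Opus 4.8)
The plan is to prove Lemma~\ref{lem:vect} by a direct elementary computation, checking the two assertions (nonvanishing of the first $k+1$ coordinates, and the size bound on all coordinates) separately.

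First I would establish the size bound, which is the easy half. For each $i \in \{1, \ldots, n\}$ we have $u_i = v_i + \lambda w_i$ (with the convention $v_i = 0$ for $i > k$). By definition of $\lambda$, both $\lvert v_i \rvert$ and $\lvert w_i \rvert$ are at most $\lambda - 1 < \lambda$, so
\[ \lvert u_i \rvert \leq \lvert v_i \rvert + \lambda \lvert w_i \rvert < \lambda + \lambda \cdot \lambda \leq 2\lambda^2, \]
using $\lambda \geq 1$. This gives the stated bound for every coordinate.

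Next I would verify that $u_1, \ldots, u_{k+1}$ are all nonzero. For $i = k+1$: here $v_{k+1} = 0$, so $u_{k+1} = \lambda w_{k+1}$, which is nonzero since $\lambda \geq 1$ and $w_{k+1} \neq 0$ by hypothesis. For $i \in \{1, \ldots, k\}$: we have $u_i = v_i + \lambda w_i$ with $v_i \neq 0$. If $w_i = 0$ then $u_i = v_i \neq 0$. If $w_i \neq 0$ then $\lvert \lambda w_i \rvert \geq \lambda > \lvert v_i \rvert$ by the definition of $\lambda$ (since $\lambda = 1 + \max\{\ldots\} > \lvert v_i \rvert$), so $\lvert u_i \rvert \geq \lvert \lambda w_i \rvert - \lvert v_i \rvert > 0$. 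In either case $u_i \neq 0$, completing the proof.

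There is essentially no obstacle here; the only point requiring a moment's care is the strict inequality $\lambda > \lvert v_i \rvert$, which is precisely why the ``$1+$'' is included in the definition of $\lambda$ — it ensures the term $\lambda w_i$ dominates $v_i$ in absolute value whenever $w_i \neq 0$, preventing accidental cancellation. (In the intended application, $v$ will be the exponent vector of a multiplicative relation among $\alpha_1, \ldots, \alpha_k$ with all exponents nonzero, $w$ the exponent vector of the full relation among $\alpha_1, \ldots, \alpha_n$; the lemma is then applied iteratively to extend the support of the exponent vector one coordinate at a time while keeping the exponents polynomially bounded, which yields Proposition~\ref{prop:nonminexpbd} from Lemma~\ref{lem:expbd}.)
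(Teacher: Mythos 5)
Your proof is correct and takes essentially the same route as the paper: the key step in both is that $w_i \neq 0$ forces $\lvert w_i \rvert \geq 1$, hence $\lambda \lvert w_i \rvert \geq \lambda > \lvert v_i \rvert$, preventing cancellation, and the size bound follows from $\lambda + \lambda^2 \leq 2\lambda^2$. The paper merely compresses this to a one-line remark.
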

	
	\begin{proof}
		For $i=1, \ldots, k$, if $w_i \neq 0$, then $\lvert w_i \rvert \geq 1$ and so $\lambda \lvert w_i \rvert > \lvert v_i \rvert$. The result follows immediately since $m^2 + m \leq 2 m^2$ for all $m \in \Z$.
	\end{proof}

	\begin{proof}[Proof of Proposition~\ref{prop:nonminexpbd}]
		Let $L$ be a number field and $d= [L : \Q]$. Suppose that $\alpha_1, \ldots, \alpha_n \in L^\times$ are pairwise distinct and such that
		\[ \prod_{i=1}^n \alpha_i^{a_i} = 1\]
		for some $a_1, \ldots, a_n \in \Z \setminus \{0\}$. The set
		\[ \mathcal{S}= \{\alpha_1, \ldots, \alpha_n\}\]
		is thus multiplicatively dependent, but not necessarily minimally multiplicatively dependent. For each $i=1,\ldots, n$ though, there exists $S_i \subset \mathcal{S}$ such that $\alpha_i \in S_i$ and $S_i$ is minimally multiplicatively dependent, see e.g. \cite[Lemma~5.9]{Fowler21}. We will apply Lemma~\ref{lem:expbd} to each set $S_i$.
		
		By Lemma~\ref{lem:expbd}, there exist constants $c_1(n), c_2(n)>0$ such that, for each $i=1, \ldots, n$, there exist $b_{i, k} \in \Z \setminus \{0\}$ for $k \in S_i$ with
		\[ \prod_{k \in S_i} \alpha_k^{b_{i, k}} = 1\]
		and 
		\[ \lvert b_{i, k} \rvert \leq c_1(n) (d \max \{1, h(\alpha_1), \ldots, h(\alpha_n)\})^{c_2(n)}. \]
		Now let $v_i \in \Z^n$ be the vector with $k$th coordinate $v_{i, k}$ equal to $b_{i, k}$ if $k \in S_i$ and $0$ otherwise. Hence, for $i=1, \ldots, n$, we have that $v_{i, i} \neq 0$ and
		\[ \prod_{k=1}^n \alpha_k^{v_{i, k}} = 1.\]
		
		Let 
		\[\mu = 1 + \max \{ \lvert v_{i, k} \rvert : i, k \in \{1, \ldots, n\}\}.\]
		Apply Lemma~\ref{lem:vect} inductively to $v_1, \ldots, v_n$ to obtain a vector
		\[w = (w_1, \ldots, w_n) \in \Z^n\] 
		which is a $\Z$-linear combination of $v_1, \ldots, v_n$ and such that $w_1, \ldots, w_n \neq 0$ and
		\[ \lvert w_i \rvert \leq c_3(n) \mu^{c_4(n)} \mbox{ for } i=1, \ldots, n\]
		for some constants $c_3(n), c_4(n)>0$. In particular, there are constants $c_5(n), c_6(n)>0$ such that
		\[ \lvert w_i \rvert \leq c_5(n)  (d \max \{1, h(\alpha_1), \ldots, h(\alpha_n)\})^{c_6(n)} \mbox{ for } i=1, \ldots, n.\]
		Since $w$ is a $\Z$-linear combination of $v_1, \ldots, v_n$, one has that
		\[\prod_{i=1}^n \alpha_i^{w_i} = 1,\]
		as required.
	\end{proof}
	
	\subsection{Completing the proof of Theorem~\ref{thm:main}}\label{subsec:pfmain}
	
	Now we come to the proof of Theorem~\ref{thm:main}. Fix $n \in \Z_{>0}$. In the proof, $c_1, c_2, \ldots$ will denote positive constants which depend only on $n$. Any other dependencies among constants will be explicitly indicated.
	
	By Theorem~\ref{thm:MSC}, there are only finitely many multiplicative special curves in $\C^{n+1}$. Since every multiplicative special curve is defined over $\alg$, we may fix some number field $K$ over which all the multiplicative special curves in $\C^{n+1}$ are defined.
	
	Define the complexity $\Delta$ of an $(n+1)$-tuple $(x_1, \ldots, x_n, y)$ of singular moduli $x_1, \ldots, x_n, y$ by 
	\[\Delta = \max \{\lvert \Delta(x_1) \rvert, \ldots, \lvert \Delta(x_n) \rvert, \lvert \Delta(y) \rvert\}.\]
	Recall that $e \colon \C \to \C^\times$ is given by $e(z) = \exp(2 \pi i z)$. Let $\mathfrak{F}_e = \{ z \in \C : 0 \leq \re z < 1\}$, so that $e$ restricted to $\mathfrak{F}_e$ is a bijection. 
	
	Recall that by definable we always mean definable (with parameters) in the structure $\RAE$. For the purposes of definability, we identify subsets of $\C^n$ with subsets of $\R^{2n}$ in the obvious way. In particular, the usual field operations on $\C$ are thus definable. We will use the fact that the restricted functions $e \colon \mathfrak{F}_e \to \C^\times$ and $j \colon \mathfrak{F}_j \to \C$ are both definable. The restriction of $e$ is definable using restricted $\sin$ and $\cos$ and the unrestricted real exponential function. The definability of $j$ may then be deduced from its $q$-expansion, see \cite[Example~4.14]{Zannier12} for the details.
	
	Let
	\begin{align*}
		Y = \Big\{&(z_1, \ldots, z_n, z, w_1, \ldots, w_n, w, u_1, \ldots, u_n, r_1, \ldots, r_n, s)\\
		&\in \mathfrak{F}_j^{2(n+1)} \times \mathfrak{F}_e^n \times \R^{n+1} : \sum_{i=1}^n r_i u_i = s, w=z, \mbox{ and } \\
		& w_i = z_i \mbox{ and } e(u_i) = j(z_i) - j(z)\mbox{ for } i=1,\ldots, n \Big\}
	\end{align*}
	and
	\begin{align*} 
		Z = \Big\{&(z_1, \ldots, z_n, z, r_1, \ldots, r_n, s) \in \mathfrak{F}_j^{n+1} \times \R^{n+1} :\\ 
		&\exists (u_1, \ldots, u_n) \in \mathfrak{F}_e^n \mbox{ such that }\\
		&(z_1, \ldots, z_n, z, z_1, \ldots, z_n, z, u_1, \ldots, u_n, r_1, \ldots, r_n, s) \in Y\Big\}.
	\end{align*}
	The sets $Y, Z$ are both definable.
	
	Suppose that $(x_1, \ldots, x_n, y)$ is an $(n+1)$-tuple of pairwise distinct singular moduli $x_1, \ldots, x_n, y$ such that
	\[ \prod_{i=1}^n (x_i - y)^{a_i} = 1\]
	for some $a_i \in \Z \setminus \{0\}$. Let $\Delta$ be the complexity of this tuple. Let
	\[d = [\Q(x_1, \ldots, x_n, y) : \Q].\]

	By Proposition~\ref{prop:nonminexpbd}, we may assume that
	\[ \lvert a_i \rvert \leq c_1 (d \max\{1, h(x_1), \ldots, h(x_n), h(y)\})^{c_2}\]
	for $i \in \{1, \ldots, n\}$. Then apply Proposition~\ref{prop:galbd} (with $\epsilon=1/4$ say) to give an upper bound on $d$ and Proposition~\ref{prop:htsingmod} to bound the logarithmic heights of the singular moduli. One thereby obtains that, for $i \in \{1, \ldots, n\}$, 
	\[\lvert a_i \rvert \leq c_3 \Delta^{c_4}.\]
	
	Let
	\[ (\tau_1, \ldots, \tau_n, \tau, \nu_1, \ldots, \nu_n) \in \mathfrak{F}_j^{n+1} \times \mathfrak{F}_e^n\]
	be the preimage of
	\[ (x_1, \ldots, x_n, y, x_1-y, \ldots, x_n - y)\]
	with respect to the map $(j, e) \colon \mathfrak{F}_j^{n+1} \times \mathfrak{F}_e^n \to \C^{n+1} \times (\C^\times)^n$. Note that $\tau_1, \ldots, \tau_n, \tau$ are all quadratic, since they are the preimages for $j$ of singular moduli. By Proposition~\ref{prop:htpresingmod}, the real and imaginary parts of $\tau_1, \ldots, \tau_n, \tau$ all have multiplicative height $\leq 4 \Delta/3$. Observe also that 
	\[ \sum_{i=1}^n a_i \nu_i \in \Z,\]
	since
	\[ \prod_{i=1}^n e(\nu_i)^{a_i} = 1.\]
	Let $b = \sum_{i=1}^n a_i \nu_i$. Then
	\[ \lvert b \rvert \leq \sum_{i=1}^n \lvert a_i \rvert,\]
	since $\nu_1, \ldots, \nu_n$ all have real part in the interval $[0, 1)$.
	In particular,
	\[ \lvert b \rvert \leq c_5 \Delta^{c_4}.\]
	The tuple $(x_1, \ldots, x_n, y)$ thus gives rise to the point
	\[ (\tau_1, \ldots, \tau_n, \tau, a_1, \ldots, a_n, b) \in Z,\]
	which is quadratic in the $\tau_i, \tau$ coordinates and integral in the $a_i, b$ coordinates. Further, the $2$-height of this point is $\leq c_6 \Delta^{c_7}$ by Proposition~\ref{prop:hts} and the above bound on the multiplicative height.
	
	Every Galois conjugate $(x_1', \ldots, x_n', y')$ of $(x_1, \ldots, x_n, y)$ over $K$ satisfies the multiplicative relation
	\[ \prod_{i=1}^n (x_i' - y')^{a_i} =1,\]
	where $a_1, \ldots, a_n$ are the same integers as before. The conjugate $(x_1', \ldots, x_n', y')$ thus gives rise, in the same way as $(x_1, \ldots, x_n, y)$ did, to a point
	\[ (\tau_1', \ldots, \tau_n', \tau', a_1, \ldots, a_n, b') \in Z,\]
	where the $\tau_i', \tau'$ are quadratic and of multiplicative height $\leq 4 \Delta/3$, the $a_i$ are the same integers as before, and $b'$ is an integer such that $\lvert b' \rvert \leq c_5 \Delta^{c_4}$. Note that $b'$ is not necessarily the same as $b$. In particular, the point $(\tau_1', \ldots, \tau_n', \tau', a_1, \ldots, a_n, b')$ also has $2$-height $\leq c_6 \Delta^{c_7}$. Further, the corresponding points of $Z$ arising from distinct $K$-conjugates of $(x_1, \ldots, x_n, y)$ are always distinct in the $\mathfrak{F}_j^{n+1}$ coordinates. By Proposition~\ref{prop:galbd} with $\epsilon = 1/4$, there are at least $c_8 \Delta^{1/4}$ distinct $K$-conjugates of $(x_1, \ldots, x_n, y)$, each of which gives rise to a distinct point of $Z$ in the above way. 
	
	View $Y$ as a definable family of sets fibred over the $(r_1, \ldots, r_n)$ coordinates. Each of the points 
	\[ (\tau_1', \ldots, \tau_n', \tau', a_1, \ldots, a_n, b') \in Z\]
	described above is the projection of a point
	\[ (\tau_1', \ldots, \tau_n', \tau', \tau_1', \ldots, \tau_n', \tau', \nu_1', \ldots, \nu_n', a_1, \ldots, a_n, b') \in Y.\]
	Note that the $Y$-points arising in this way from distinct conjugates over $K$ of $(x_1, \ldots, x_n, y)$ are distinct in their $(\tau_1', \ldots, \tau_n', \tau')$ coordinates.
	
	These points $(\tau_1', \ldots, \tau_n', \tau', \tau_1', \ldots, \tau_n', \tau', \nu_1', \ldots, \nu_n', b')$ all lie on the fibre $Y_{(a_1, \ldots, a_n)}$ of $Y$ over $(a_1, \ldots, a_n)$. Let 
	\[\pi_1 \colon Y_{(a_1, \ldots, a_n)} \to \mathfrak{F}_j^{n+1} \times \R \mbox{ and }\pi_2 \colon Y_{(a_1, \ldots, a_n)} \to \mathfrak{F}_j^{n+1} \times \mathfrak{F}_e^{n}\] 
	be the projection maps sending
	\[ (z_1, \ldots, z_n, z, w_1, \ldots, w_n, w, u_1, \ldots, u_n, s) \mapsto (z_1, \ldots, z_n, z, s)\]
	and
	\[ (z_1, \ldots, z_n, z, w_1, \ldots, w_n, w, u_1, \ldots, u_n, s) \mapsto (w_1, \ldots, w_n, w, u_1, \ldots, u_n)\]
	respectively. Observe that $\pi_2$ is injective. Let $\Sigma \subset Y_{(a_1, \ldots, a_n)}$ be the set consisting of all the points arising in the way described above from the $K$-conjugates of $(x_1, \ldots, x_n, y)$. Then $\pi_1(\Sigma)$ contains only algebraic points of degree at most $2$ and which have $2$-height $\leq c_6 \Delta^{c_7}$. Also, $\# \pi_2(\Sigma) > c_8 \Delta^{1/4}$.
	
	Now let $C_{HP}>0$ be the constant given by Theorem~\ref{thm:count} applied to $Y$ with $k=2$ and $\epsilon = (8 c_7)^{-1}$. Note that $c_7$ depends only on $n$, which is fixed, and $C_{HP}$ depends only on $Y, k, \epsilon$, which are all fixed. In particular, $C_{HP}$ is independent of $(x_1, \ldots, x_n, y)$ and $a_1,\ldots, a_n$. Let $T= c_6 \Delta^{c_7}$. Then $\pi_1(\Sigma)$ contains only algebraic points of degree $\leq 2$ and $2$-height $\leq T$ and
	\[ \# \pi_2(\Sigma) > c_8 \Delta^{1/4}.\]
	In particular, if $\Delta > (c_6 C_{HP} / c_8)^8$, then
	\[ \# \pi_2(\Sigma) > C_{HP} T^{\epsilon}\]
	(here we assume without loss of generality that $c_6, c_7 \geq 1$, so $c_6^\epsilon \leq c_6$).
	
	Suppose then that $\Delta > (c_6 C_{HP} / c_8)^8$. Theorem~\ref{thm:count} implies that there exists a continuous, definable function $\beta \colon [0, 1] \to Y_{(a_1, \ldots, a_n)}$ with the following properties:
	\begin{enumerate}
		\item The composition $\pi_1 \circ \beta \colon [0, 1] \to \mathfrak{F}_j^{n+1} \times \R$ is semialgebraic and its restriction to $(0, 1)$ is real analytic.
		\item The composition $\pi_2 \circ \beta \colon [0, 1] \to \mathfrak{F}_j^{n+1} \times \mathfrak{F}_e^n$ is non-constant.
		\item $\pi_2(\beta(0)) \in \pi_2(\Sigma)$.
		\item The restriction of $\beta$ to $(0, 1)$ is real analytic.
	\end{enumerate}
	Note that, by the construction of the set $Y$, property (2) implies that $\pi_1 \circ \beta$ composed with projection to the $\mathfrak{F}_j^{n+1}$ coordinates is non-constant. Since $\pi_2$ is injective, we have that $\beta(0) \in \Sigma$, i.e.~$\beta(0)$ is a point of $Y_{(a_1, \ldots, a_n)}$ arising from some $K$-conjugate $(x_1', \ldots, x_n', y')$ of $(x_1, \ldots, x_n, y)$.
	
	Let 
	\[ V_{(a_1, \ldots, a_n)} = \left\{ (z_1, \ldots, z_n, z, w) \in \C^{n+1} \times \C^\times : \prod_{i=1}^n (z_i - z)^{a_i} = w\right\}.\]
	Let $\mathcal{V} = \pi^{-1}(V_{(a_1, \ldots, a_n)})$, where $\pi = (j, \ldots, j, e)$ is the map defined in Section~\ref{subsec:Ax}. Observe that $(\pi_1 \circ \beta)([0, 1])$ is a connected, positive-dimensional semialgebraic set contained in $\mathcal{V}$. 
	
	Note that $V_{(a_1, \ldots, a_n)}$ is an algebraic subvariety of $\C^{n+1} \times \C^\times$. Hence we may apply \cite[Proposition~6.2]{Pila11} to $\mathcal{V}$ and thereby obtain a complex algebraic component $W$ of $\mathcal{V}$ such that $\pi_1(\beta(0)) \in W$. Note that $W$ is, by definition, positive-dimensional. Enlarging $W$ as necessary, we may assume that $W$ is a maximal complex algebraic component of $\mathcal{V}$.
	
	The Ax--Lindemann result of Theorem~\ref{thm:AxL} thus implies that $W$ is a weakly special subvariety of $\h^{n+1} \times \C$. So $W = W_1 \times W_2$, where $W_1$ is a weakly special subvariety of $\h^{n+1}$ and $W_2$ is a weakly special subvariety for $e$ of $\C$. Since $W \subset \mathcal{V}$ and the projection $\mathcal{V} \to \h^{n+1}$ has discrete fibres, the weakly special subvariety $W_2$ must just be a point. Hence, $W_2$ is equal to the projection of $\pi_1(\beta(0))$, which is $\{b'\}$ for some $b' \in \Z$. Moreover, $W_1$ must be positive-dimensional. 
	
	Also, $W_1$ contains the preimage in $\mathfrak{F}_j^{n+1}$ of the $K$-conjugate $(x_1', \ldots, x_n', y')$ of $(x_1, \ldots, x_n, y)$, since $\pi_1(\beta(0)) \in W$. Finally, note that
	\begin{align}\label{eq:predep}
		W_1 \subset \left\{ (z_1, \ldots, z_n, z) : \prod_{i=1}^n (j(z_i) - j(z))^{a_i} =1\right\},
	\end{align}
	since $W_2 = \{b'\}$ and $b' \in \Z$. 
	
	The image $j(W_1)$ is therefore a positive-dimensional weakly special subvariety of $\C^{n+1}$ which contains $(x_1', \ldots, x_n', y')$. In particular, $j(W_1)$ is in fact a special subvariety of $\C^{n+1}$, because $j(W_1)$ contains the special point $(x_1', \ldots, x_n', y')$. 
	
	Let $l = \dim j(W_1)$. A special subvariety of $\C^{n+1}$ of dimension $l$ is equal, up to reordering coordinates, to some Cartesian product $M_1 \times \ldots \times M_l$, where $M_1 \subset \C^{m_1}, \ldots, M_l \subset \C^{m_l}$ are one-dimensional special subvarieties and $m_1, \ldots, m_l \in \Z_{>0}$, see e.g.~\cite[pp.~33--34]{Pila22}. Hence, after reordering only the first $n$ coordinates of $j(W_1)$, we have that
	\[ j(W_1) = M_1 \times \ldots \times M_l\]
	for some one-dimensional special subvarieties $M_1 \subset \C^{m_1}, \ldots, M_l \subset \C^{m_l}$. By Proposition~\ref{prop:jset}, for each $i \in \{1, \ldots, l\}$, there exist $j$-maps $f_{i, 1}, \ldots, f_{i, m_i}$, which are not all constant, such that
	\[ M_i = \{ (f_{i, 1}(z_i), \ldots, f_{i, m_i}(z_i)) : z_i \in \h\}.\]
	
	By \eqref{eq:predep}, we have that
	\begin{align}\label{eq:reindexdep}
		\prod_{i=1}^l \prod_{\substack{k=1\\ (i, k) \neq (l, m_l)}}^{m_i} (f_{i, k}(z_i) - f_{l, m_l}(z_l))^{a_{i, k}} = 1
	\end{align}
	for all $(z_1, \ldots, z_l) \in \h^l$, where the $a_{i, k}$ are the appropriate re-indexing of the $a_i$ in \eqref{eq:predep}. For $i \in \{1, \ldots, l-1\}$, let $\tau_i \in \h$ be such that
	\[(f_{i, 1}(\tau_i), \ldots, f_{i, m_i}(\tau_i)) = \pi_{M_i}((x_1', \ldots, x_n', y')),\]
	where $\pi_{M_i} \colon \C^{n+1} \to \C^{m_i}$ denotes the projection map onto the coordinates corresponding to $M_i$. Such a $\tau_i$ exists since $(x_1', \ldots, x_n', y') \in j(W_1)$. Let
	\[ M_i(\tau_i) = \{ (f_{i, 1}(\tau_i), \ldots, f_{i, m_i}(\tau_i))\}.\]
	
	Now let
	\[ M = \prod_{i=1}^{l-1} M_i(\tau_i) \times M_l.\]
	Then $M$ is a one-dimensional special subvariety of $\C^{n+1}$. By Proposition~\ref{prop:jset}, there exist $j$-maps $f_1, \ldots, f_n, f$ such that
	\[ M = \{(f_1(z), \ldots, f_n(z), f(z)) : z \in \h\}.\]
	Note that at least one of $f_1, \ldots, f_n, f$ is non-constant. By \eqref{eq:reindexdep}, we have that
	\begin{align*}\label{eq:mainpfend}
		\prod_{i=1}^n (f_i(z) - f(z))^{a_i} =1 \mbox{ for all } z \in \h.
	\end{align*}
	After reordering only the first $n$ coordinates of $M$, we may assume that $(x_1', \ldots, x_n', y') \in M$. Since $x_1', \ldots, x_n', y'$ are pairwise distinct, the $j$-maps $f_1, \ldots, f_n, f$ are therefore pairwise distinct. Therefore, $M$ is a multiplicative special curve.
	
	Thus, $M$ is one of the finitely many multiplicative special curves in $\C^{n+1}$ given by Theorem~\ref{thm:MSC}. In particular, $M$ is defined over $K$. Thus, $(x_1, \ldots, x_n, y) \in M$, since $M$ contains the $K$-conjugate $(x_1', \ldots, x_n', y')$ of $(x_1, \ldots, x_n, y)$.
	
	We have therefore shown that, for $(x_1, \ldots, x_n, y)$ an $(n+1)$-tuple of pairwise distinct singular moduli $x_1, \ldots, x_n, y$ of complexity $\Delta$ such that 
	\[ \prod_{i=1}^{n} (x_i - y)^{a_i} =1\]
	for some $a_1, \ldots, a_n \in \Z \setminus \{0\}$, if $\Delta > (c_6 C_{HP} / c_8)^8$, then $(x_1, \ldots, x_n, y)$ belongs to one of the finitely many multiplicative special curves in $\C^{n+1}$. Hence, the complexity of every such $(n+1)$-tuple which does not lie on a multiplicative special curve in $\C^{n+1}$ is $\leq (c_6 C_{HP} / c_8)^8$. In particular, there are only finitely many such $(n+1)$-tuples. This completes the proof of Theorem~\ref{thm:main}. Corollary~\ref{cor:small} follows immediately.

\section{The Zilber--Pink connection}\label{sec:ZP}

Let $m, n \in \Z_{>0}$. Let
\[ X_{m, n} = \C^m \times (\C^\times)^n.\]
Recall the definition of a special subvariety of $X_{m, n}$ from Definition~\ref{def:specials}.

\begin{definition}\label{def:atyp}
	Let $V \subset X_{m, n}$ be a subvariety. A subvariety $W \subset V$ is called an atypical component of $V$ in $X_{m, n}$ if there exists a special subvariety $T \subset X_{m, n}$ such that $W$ is an irreducible component of $V \cap T$ and
	\[ \dim W > \dim V + \dim T - \dim X_{m,n}.\]
	An atypical component $W$ of $V$ in $X_{m, n}$ is a maximal atypical component of $V$ in $X_{m, n}$ if there does not exist any atypical component $W'$ of $V$ in $X_{m, n}$ such that $W \subsetneq W'$.
\end{definition}

The Zilber--Pink conjecture was formulated independently in different contexts by Zilber \cite{Zilber02}, Pink \cite{Pink05}, and Bombieri, Masser, and Zannier \cite{BombieriMasserZannier07}. The conjecture is wide open; see \cite[Part~IV]{Pila22} for more details. In our context, the Zilber--Pink conjecture is the following statement.

\begin{conj}[Zilber--Pink conjecture]\label{conj:ZP}
	Let $V \subset X_{m, n}$ be a subvariety. Then there are only finitely many maximal atypical components of $V$ in $X_{m, n}$.
\end{conj}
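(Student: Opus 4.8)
The final statement is the Zilber--Pink conjecture in the mixed ambient $X_{m,n}=\C^m\times(\C^\times)^n$, which the excerpt itself records as wide open, so what follows is a plan of attack rather than a proof I can complete. The plan is to run the Pila--Zannier strategy in the version organised around Habegger--Pila's notion of \emph{optimality}. Working with $\pi\colon U=\h^m\times\C^n\to X_{m,n}$ as in Subsection~\ref{subsec:Ax} (extended to general $m$), one first reduces Conjecture~\ref{conj:ZP} to the statement that a fixed subvariety $V\subset X_{m,n}$ has only finitely many \emph{optimal} subvarieties: every maximal atypical component of $V$ in the sense of Definition~\ref{def:atyp} is optimal, and by a Noetherian induction on $\dim$ it suffices to bound, for each special $T\subset X_{m,n}$ realising an optimal component $W$ of $V\cap T$, the complexity $\Delta(T)$ (level of the modular polynomials defining the $\C^m$-part together with the order of the torsion coset defining the $(\C^\times)^n$-part). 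Once $\Delta(T)$ is bounded there are only finitely many such $T$, hence finitely many optimal $W$, and the conjecture follows.

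The functional transcendence input needed for this is exactly the Weak Complex Ax statement of Theorem~\ref{thm:Ax2} (stated there, and available in the literature, at the level of generality required). The way I would use it: given an optimal component $W$ and an analytically irreducible component $A$ of $\pi^{-1}(W)$ lying inside an algebraic subvariety $B\subset U$ produced by the counting step below, Theorem~\ref{thm:Ax2} forces either the ``typical'' dimension equality, which contradicts the atypicality of $W$, or that $A$ lies in a proper weakly special subvariety $\widetilde T\subset U$; pushing forward, $W$ lies in a proper special subvariety $T'=\pi(\widetilde T)$ of $X_{m,n}$, inside which $W$ is again atypical of smaller codimension. This is what makes the induction go, and it is the part of the argument that is in reasonable shape: the geometric and functional-transcendence skeleton carries over verbatim from Section~\ref{sec:pf}.

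The arithmetic step is where the real content lies. For the optimal $W$ attached to a special $T$ of complexity $\Delta(T)$, one realises $V$ inside a fundamental domain (product of copies of $\mathfrak F_j$ and of $\mathfrak F_e$, as in Subsection~\ref{subsec:pfmain}), obtains from $W$ a point whose coordinates have height polynomial in $\Delta(T)$ (by the analogues of Proposition~\ref{prop:htpresingmod} for the $\C^m$-factor and the trivial bound for roots of unity in the $(\C^\times)^n$-factor), and applies the counting theorem Theorem~\ref{thm:count} (or a block version of Pila--Wilkie) to the relevant definable set: if $\Delta(T)$ were large, the many Galois conjugates of the point defining $W$ over a fixed number field $K$ would yield $\gg\Delta(T)^{\delta}$ algebraic points of bounded degree and height $\ll\Delta(T)^{O(1)}$, producing a positive-dimensional semialgebraic block, hence (by the previous paragraph) ambient special structure contradicting the maximality of $W$. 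Thus $\Delta(T)$ is bounded, as desired.

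The hard part, and the reason the conjecture is open, is the Galois-orbit lower bound that the counting step consumes: one needs $[\,K(T):K\,]\gg\Delta(T)^{\delta}$ \emph{uniformly over all the intermediate special subvarieties $T$ that arise}, not merely for special points. In the $(\C^\times)^n$-factor this is Dobrowolski-type for torsion points but requires care uniform in the subtorus for torsion cosets of positive dimension; in the $\C^m$-factor only the case of CM points is known, via Siegel's bound (our Proposition~\ref{prop:galbd}), while the analogous lower bound for positive-dimensional special subvarieties cut out by modular polynomials of large level is itself conjectural (Pila--Tsimerman's work on $\mathcal A_g$ giving it only in low dimension). I therefore expect this plan to deliver, unconditionally, a \emph{conditional} proof of Conjecture~\ref{conj:ZP} --- conditional on the ``large Galois orbits'' hypothesis for $X_{m,n}$ --- together with unconditional results in those special cases where no such hypothesis is needed: for instance when $V$ admits no atypical components coming from positive-dimensional special subvarieties of the $\C^m$-factor (so that the $\C^m$-part of every relevant $T$ is a point and only the torus arithmetic underlying Theorem~\ref{thm:main} is used), or in low dimensions $m$ where the Pila--Tsimerman bounds already suffice.
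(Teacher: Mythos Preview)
The statement is a \emph{conjecture}, not a theorem: the paper does not prove it and explicitly says ``The conjecture is wide open''. You correctly recognise this in your opening sentence and offer a strategic outline rather than a proof, which is the appropriate response; your sketch of the Pila--Zannier/Habegger--Pila optimality approach is accurate as a description of how such problems are attacked, and your identification of the large-Galois-orbits hypothesis as the missing arithmetic input is exactly right. Since the paper contains no proof of Conjecture~\ref{conj:ZP}, there is nothing to compare your proposal against.
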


In the remainder of this section, we show that, in light of Theorem~\ref{thm:MSC}, Theorem~\ref{thm:main} would follow from Conjecture~\ref{conj:ZP}. For $n \in \Z_{>0}$, we define $V_n \subset X_{n+1, n}$ by
\[ V_n =\{(w_1, \ldots, w_n, w, t_1, \ldots, t_n) \in X_{n+1, n} : t_i = w_i - w \mbox{ for } i =1, \ldots, n\}.\]
Note that $\dim X_{n+1, n} = 2n+1$ and $\dim V_n = n+1$.

\begin{prop}\label{prop:atyp}
	Let $n \in \Z_{>0}$. Suppose that $x_1, \ldots, x_n, y$ are singular moduli such that $x_i \neq y$ for $i \in \{1, \ldots, n\}$ and
	\[\prod_{i=1}^n (x_i - y)^{a_i} = 1\]
	for some $a_1, \ldots, a_n \in \Z$ which are not all zero. Then
	\[\{(x_1, \ldots, x_n, y, x_1 - y, \ldots, x_n - y) \}\]
	is an atypical component of $V_n$ in $X_{n+1, n}$.
\end{prop}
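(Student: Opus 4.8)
The plan is to exhibit directly a special subvariety $T \subset X_{n+1,n}$ cutting $V_n$ atypically at the single point $P := (x_1,\ldots,x_n,y,x_1-y,\ldots,x_n-y)$. Since $\dim X_{n+1,n} = 2n+1$, $\dim V_n = n+1$, and $\dim\{P\} = 0$, it suffices to produce a special $T$ of dimension $n-1$ with $P \in T$ and $\{P\}$ an irreducible component of $V_n \cap T$; then $\dim\{P\} = 0 > -1 = \dim V_n + \dim T - \dim X_{n+1,n}$, which is exactly the atypicality inequality. Note that $P$ genuinely lies on $V_n$: its torus coordinates $x_i-y$ are nonzero by hypothesis, so $P \in X_{n+1,n}$, and it satisfies $t_i = w_i - w$.

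I would take $T = M \times S$ with $M = \{(x_1,\ldots,x_n,y)\} \subset \C^{n+1}$ and $S \subset (\C^\times)^n$. Since $x_1,\ldots,x_n,y$ are singular moduli, $M$ is a special point of $\C^{n+1}$, hence a zero-dimensional special subvariety. For $S$, set $d = \gcd(a_1,\ldots,a_n) \geq 1$, write $a_i = d b_i$ so that $\gcd(b_1,\ldots,b_n) = 1$, and put $\zeta = \prod_{i=1}^n (x_i-y)^{b_i}$, so that $\zeta^d = \prod_{i=1}^n (x_i-y)^{a_i} = 1$. Define
\[ S = \Big\{ (t_1,\ldots,t_n) \in (\C^\times)^n : \prod_{i=1}^n t_i^{b_i} = \zeta \Big\}. \]
Because $\gcd(b_1,\ldots,b_n) = 1$, the homomorphism $t \mapsto \prod_i t_i^{b_i}$ has connected, hence irreducible, kernel, so $S$ is an irreducible coset of a subtorus of dimension $n-1$, and it contains $(x_1-y,\ldots,x_n-y)$.

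The one point requiring care is that $S$ is special, not merely weakly special. By the characterisation of special subvarieties of $(\C^\times)^n$ recalled after Definition~\ref{def:specials}, it is enough to check that $S$ contains a point all of whose coordinates are roots of unity: choose $c_i \in \Z$ with $\sum_i c_i b_i = 1$ (possible as $\gcd(b_i) = 1$); then $(\zeta^{c_1},\ldots,\zeta^{c_n}) \in S$ and each $\zeta^{c_i}$ is a root of unity. Hence $T = M \times S$ is a special subvariety of $X_{n+1,n}$ of dimension $n-1$. Finally, any point of $V_n \cap T$ has its first $n+1$ coordinates forced to $(x_1,\ldots,x_n,y)$ by $M$ and its torus coordinates forced to $w_i - w = x_i - y$ by the defining equations of $V_n$; conversely $P$ lies in $S$ by the choice of $\zeta$. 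Thus $V_n \cap T = \{P\}$, and the dimension count above completes the argument. There is no serious obstacle here, the construction being completely explicit; the only subtlety is the passage to the primitive vector $(b_i)$, without which the torus one writes down need be neither irreducible nor special.
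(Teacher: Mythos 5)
Your proof is correct and takes essentially the same route as the paper: both exhibit the special subvariety $T = M \times S$ with $M$ the singleton $\{(x_1,\ldots,x_n,y)\}$ and $S$ a proper special subvariety of $(\C^\times)^n$ containing $(x_1-y,\ldots,x_n-y)$, and both then verify the dimension inequality $0 > (n+1)+(n-1)-(2n+1)$. The only difference is one of detail: the paper simply asserts that the torus point lies in a special subvariety of dimension at most $n-1$, whereas you construct $S$ explicitly via the primitive vector $(b_1,\ldots,b_n)$ and check both irreducibility and the presence of a torsion point, which is a worthwhile elaboration of a step the paper leaves implicit.
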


\begin{proof}
	Let
	\[ \sigma = (x_1, \ldots, x_n, y, x_1 - y, \ldots, x_n - y).\]
	Observe that $\sigma \in V_n$. Since $x_1, \ldots, x_n, y$ are singular moduli, the set
	\[\{(x_1, \ldots, x_n, y)\}\]
	is a special subvariety of $\C^{n+1}$ of dimension $0$. Write $M$ for this special subvariety. Since $x_i \neq y$ and
	\[\prod_{i=1}^n (x_i - y)^{a_i} = 1,\]
	the point 
	\[(x_1 - y, \ldots, x_n - y)\]
	is contained in a special subvariety $T \subset (\C^\times)^n$ of dimension at most $n-1$.
	Hence, $M \times T$ is a special subvariety of $X_{n+1, n}$ of dimension $\leq n-1$. Thus,
	\[ \dim V_n + \dim (M \times T) - \dim X_{n+1, n} \leq (n+1) + (n-1) - (2n+1)  < 0.\]
	Thus, $\{\sigma\} \subset V_n \cap (M \times T)$ is an atypical component of $V_n$ in $X_{n+1, n}$.
\end{proof}

\begin{prop}\label{prop:minatyp}
	Let $n \in \Z_{>0}$. Suppose that $x_1, \ldots, x_n, y$ are pairwise distinct singular moduli such that the set $\{x_1-y, \ldots, x_n - y\}$ is minimally multiplicatively dependent. Then either $(x_1, \ldots, x_n, y)$ lies on a multiplicative special curve in $\C^{n+1}$ or 
	\[ \{ (x_1, \ldots, x_n, y, x_1 - y, \ldots, x_n - y)\}\]
	is a maximal atypical component of $V_{n}$ in $X_{n+1, n}$.
\end{prop}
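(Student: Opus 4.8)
The plan is to prove the contrapositive: assuming that $\{(x_1, \ldots, x_n, y, x_1 - y, \ldots, x_n - y)\}$ is \emph{not} a maximal atypical component of $V_n$ in $X_{n+1,n}$, I will exhibit a multiplicative special curve in $\C^{n+1}$ passing through $(x_1, \ldots, x_n, y)$. Write $\sigma = (x_1, \ldots, x_n, y, x_1 - y, \ldots, x_n - y)$. By Proposition~\ref{prop:atyp}, $\{\sigma\}$ is an atypical component of $V_n$; so if it is not maximal there is a positive-dimensional atypical component $W' \supsetneq \{\sigma\}$, an irreducible component of $V_n \cap (M' \times S')$ for some special subvarieties $M' \subseteq \C^{n+1}$ and $S' \subseteq (\C^\times)^n$, with $\dim W' > \dim V_n + \dim(M' \times S') - \dim X_{n+1,n} = \dim M' + \dim S' - n$.

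First I would pin down $S'$. Since $\{x_1 - y, \ldots, x_n - y\}$ is minimally multiplicatively dependent, every nonzero integer relation among the $x_i - y$ has all of its exponents nonzero; a short linear-algebra argument then shows that the lattice of such relations has rank $1$, generated by a primitive vector $(a_1', \ldots, a_n') \in \Z^n$ with all $a_i' \neq 0$ (the case $n = 1$ is degenerate but causes no trouble). Hence the smallest special subvariety of $(\C^\times)^n$ containing $(x_1 - y, \ldots, x_n - y)$ has dimension $n - 1$. As $S'$ contains this point, $\dim S' \geq n - 1$; and $S'$ must be a proper subvariety, since $S' = (\C^\times)^n$ would force $\dim W' > \dim M'$, impossible because $W' \subseteq V_n$ projects injectively into $M' \subseteq \C^{n+1}$. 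Therefore $\dim S' = n - 1$ and $S'$ is exactly that smallest special subvariety; in particular the relation $\prod_{i=1}^{n} t_i^{a_i'} = 1$ holds identically on $S'$ (when $n = 1$ one even has $t_1 = x_1 - y$ on $S'$).

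Next comes the dimension count. The projection $p \colon X_{n+1,n} \to \C^{n+1}$ restricts to an isomorphism of $V_n$ onto an open subset of $\C^{n+1}$, so $\dim p(W') = \dim W'$ and $p(W') \subseteq M'$. Substituting $\dim S' = n - 1$ into the inequality above gives $\dim W' \geq \dim M'$, whence $\dim W' = \dim M'$ and, $M'$ being irreducible, the Zariski closure of $p(W')$ equals $M'$. Since $W'$ lies in the $S'$-part, the relation $\prod_{i=1}^n (w_i - w_{n+1})^{a_i'} = 1$ holds on $p(W')$, hence on its closure $M'$. Thus $M'$ is a positive-dimensional special subvariety of $\C^{n+1}$ which contains the special point $(x_1, \ldots, x_n, y)$ and on which $\prod_{i=1}^n (w_i - w_{n+1})^{a_i'} = 1$ (and $(w_1 - w_{n+1})^{a} = 1$ when $n = 1$, for the order $a$ of the root of unity $x_1 - y$).

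Finally I would cut $M'$ down to a curve. Using the product/block structure of special subvarieties of $\C^{n+1}$ (as in the proof of Proposition~\ref{prop:specialsdense}, or via Proposition~\ref{prop:jset}), pick one non-constant block of $M'$ and freeze every remaining coordinate at the value it takes in $(x_1, \ldots, x_n, y)$; this produces a $1$-dimensional special subvariety $L$ with $(x_1, \ldots, x_n, y) \in L \subseteq M'$. By Proposition~\ref{prop:jset}, $L = \{(f_1(z), \ldots, f_n(z), f(z)) : z \in \h\}$ for $j$-maps $f_1, \ldots, f_n, f$, not all constant (as $\dim L = 1$); they are pairwise distinct because their common value at the base point is the tuple $(x_1, \ldots, x_n, y)$, whose entries are pairwise distinct. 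Restricting the relation on $M'$ to $L$ gives $\prod_{i=1}^n (f_i(z) - f(z))^{a_i'} = 1$ for all $z \in \h$, so $L$ is a multiplicative special curve in $\C^{n+1}$ containing $(x_1, \ldots, x_n, y)$, as required. I expect the main obstacle to be the bookkeeping in this last step: checking that freezing the other coordinates genuinely keeps one inside $M'$ and yields a $1$-dimensional special subvariety through the prescribed special point, and that the resulting $L$ satisfies all clauses of the definition of a multiplicative special curve. The rank-one observation and the dimension count are comparatively short.
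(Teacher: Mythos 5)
Your proof is correct, and it tracks the paper's argument through the dimension count: in both cases one first forces $\dim T = n-1$ (your $S'$) from minimal multiplicative dependence, and then shows via $\dim W = \dim M$ and density of $p(W)$ in $M$ (your $M'$) that the multiplicative relation holds identically on the whole special subvariety $M$. You diverge from the paper at the last step. The paper argues by contradiction: if $\dim M > 1$, then Proposition~\ref{prop:specialsdense} produces a Zariski-dense family of one-dimensional special subvarieties of $M$, all but a proper closed subset of which are multiplicative special curves, contradicting the finiteness in Theorem~\ref{thm:MSC}; hence $\dim M = 1$ and $M$ itself is the required curve through $(x_1,\ldots,x_n,y)$. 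You instead cut $M$ down explicitly to a one-dimensional special subvariety $L$ through the special point by freezing all but one block at the coordinates of $(x_1,\ldots,x_n,y)$. Your route avoids invoking Theorem~\ref{thm:MSC} inside this proposition (replacing it with the block-structure bookkeeping you flag), while the paper's route is slicker but leans on the global finiteness theorem. Your explicit rank-one analysis of the relation lattice also makes visible why the exponents $a_i'$ in the resulting curve relation are all nonzero — the paper leaves this implicit, relying on minimality in exactly the way you spell out.
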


\begin{proof}
	Suppose that $x_1, \ldots, x_n, y$ are pairwise distinct singular moduli such that the set $\{x_1-y, \ldots, x_n - y\}$ is minimally multiplicatively dependent. Let
	\[ \sigma = (x_1, \ldots, x_n, y, x_1 - y, \ldots, x_n - y).\]
	Then, by Proposition~\ref{prop:atyp},	$\{\sigma\}$ is an atypical component of $V_{n}$ in $X_{n+1, n}$.
	
	Suppose then that $\{\sigma\}$ is not a maximal atypical component of $V_{n}$ in $X_{n+1, n}$. Then there exist special subvarieties $M \subset \C^{n+1}$ and $T \subset (\C^\times)^{n}$ and an irreducible component $W \subset V_n \cap (M \times T)$ such that $\dim W > 0$, $\sigma \in W$, and
	\[ \dim W > \dim V_n + \dim (M \times T) - \dim X_{n+1, n} = \dim M + \dim T - n.\]
	
	If $T = \G^n$, then
	\[ V_n \cap (M \times T) = \{(w_1, \ldots, w_n, w, w_1 - w, \ldots, w_n - w) : (w_1, \ldots, w_n) \in M\}\]
	and hence any component of this intersection clearly has dimension $\leq \dim M$ and so cannot be an atypical component. Similarly, if $M = Y(1)^{n+1}$, then any component of the intersection $V_n \cap (M \times T)$ has dimension $\leq \dim T + 1$ and hence is not an atypical component. We may thus assume that $M, T$ are both proper subvarieties.
	
	If $T$ was defined by two independent multiplicative conditions, then two independent multiplicative relations would hold on the set
	\[ \{x_1 - y, \ldots, x_n - y\},\]
	and thus some proper subset would be multiplicatively dependent, a contradiction. So $T$ must be defined by one independent multiplicative condition. 
	
	Hence, for $M \times T$ to intersect $V_n$ atypically, one must have that
	\[(\alpha_1 - \beta, \ldots, \alpha_n - \beta) \in T\]
	for every $(\alpha_1, \ldots, \alpha_n, \beta) \in M$. Thus, by Proposition~\ref{prop:jset}, if $M_0 \subset M$ is a special subvariety of $\C^{n+1}$ such that $\dim M_0 = 1$ and no two coordinates of $M_0$ are identically equal, then $M_0$ is a multiplicative special curve in $\C^{n+1}$.
	
	Suppose that $\dim M > 1$. Since $(x_1, \ldots, x_n, y) \in M$ and $x_1, \ldots, x_n, y$ are pairwise distinct, the locus in $M$ where some two coordinates are equal is a Zariski-closed proper subset of $M$. Thus, by Proposition~\ref{prop:specialsdense}, $M$ must contain infinitely many multiplicative special curves in $\C^{n+1}$. However, there are only finitely many multiplicative special curves in $\C^{n+1}$ by Theorem~\ref{thm:MSC}. So we must have that $\dim M = 1$, and so $M$ itself is a multiplicative special curve in $\C^{n+1}$. Since $(x_1, \ldots, x_n, y) \in M$, the proof is complete.
\end{proof}

\begin{prop}
	Assume Conjecture~\ref{conj:ZP}. Then Theorem~\ref{thm:MSC} implies Theorem~\ref{thm:main}.
\end{prop}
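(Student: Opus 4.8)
The plan is to deduce Theorem~\ref{thm:main} from Conjecture~\ref{conj:ZP} by combining Propositions~\ref{prop:atyp} and~\ref{prop:minatyp} with a reduction to minimally multiplicatively dependent tuples, following the pattern of Proposition~\ref{prop:mindep}. Given a tuple $(x_1,\ldots,x_n,y)$ of pairwise distinct singular moduli with $\prod_{i=1}^n(x_i-y)^{a_i}=1$, I write $\sigma=(x_1,\ldots,x_n,y,x_1-y,\ldots,x_n-y)\in V_n$ as in Proposition~\ref{prop:atyp}.

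The first step handles the minimal case: for every $k\in\{1,\ldots,n\}$, there are only finitely many $(k+1)$-tuples $(x_1,\ldots,x_k,y)$ of pairwise distinct singular moduli such that $\{x_1-y,\ldots,x_k-y\}$ is minimally multiplicatively dependent and $(x_1,\ldots,x_k,y)$ does not lie on a multiplicative special curve in $\C^{k+1}$. Indeed, by Proposition~\ref{prop:minatyp} such a tuple forces $\{\sigma\}$ to be a maximal atypical component of $V_k$ in $X_{k+1,k}$; by Conjecture~\ref{conj:ZP} there are only finitely many maximal atypical components of $V_k$, and a point that is one of them must be one of this finite list.

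The second step is the reduction. Suppose for contradiction that infinitely many tuples $(x_1,\ldots,x_n,y)$ as in Theorem~\ref{thm:main} avoid every multiplicative special curve in $\C^{n+1}$. For each such tuple and each $i$, fix, as in \cite[Lemma~5.9]{Fowler21}, a minimally multiplicatively dependent subset of $\{x_1-y,\ldots,x_n-y\}$ containing $x_i-y$, with index set $I_i\ni i$; there are finitely many possibilities for $(I_1,\ldots,I_n)$, and by Theorem~\ref{thm:MSC} finitely many multiplicative special curves in each $\C^{\lvert I_i\rvert+1}$, so after passing to a subfamily I may assume $(I_1,\ldots,I_n)$ is constant and that each minimal subtuple $((x_\ell)_{\ell\in I_i},y)$ either is constant along the subfamily or lies on a single fixed multiplicative special curve. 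If some $I_i$ equals $\{1,\ldots,n\}$ the minimal case already gives a contradiction; and if some minimal subtuple is constant then $y$ is fixed along the infinite subfamily, contradicting Theorem~\ref{thm:fixedy}. Hence every $I_i$ is proper and every minimal subtuple lies on its curve $C_i$. By Theorem~\ref{thm:MSCshape} the last coordinate of each $C_i$ is $j(z)$, so $y=j(\tau)$ for a common $\tau\in\mathfrak{F}_j$, and each $x_\ell$, being a coordinate of $C_\ell$, is either a fixed singular modulus or equals $j(g_\ell\tau)$ for some $g_\ell\in C(N_\ell)$ with $N_\ell>1$, where $(N_\ell,g_\ell)$ ranges over a set bounded in terms of $n$. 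A final pigeonhole fixes $j$-maps $f_1,\ldots,f_n$, pairwise distinct and distinct from $j$ (since the $x_\ell$ and $y$ are pairwise distinct), with $(x_1,\ldots,x_n,y)=(f_1(\tau),\ldots,f_n(\tau),j(\tau))$ for infinitely many $\tau$.

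By Proposition~\ref{prop:jset}, $C^{\ast}=\{(f_1(z),\ldots,f_n(z),j(z)):z\in\h\}$ is then a special curve in $\C^{n+1}$ through all of these tuples, and $\prod_{i=1}^n(f_i(\tau)-j(\tau))^{a_i}=1$ holds at infinitely many $\tau$. The main obstacle is to promote this to the identity $\prod_{i=1}^n(f_i(z)-j(z))^{a_i}=c$ on all of $\h$: this would make $C^{\ast}$ a multiplicative special curve containing $(x_1,\ldots,x_n,y)$, contrary to assumption, completing the proof. For this one needs the points $\tau$ to accumulate in $\h$, for which the cleanest input is the equidistribution of Galois orbits of singular moduli; alternatively one can analyse the growth of $\prod_{i=1}^n(f_i(z)-j(z))^{a_i}$ as $\im z\to\infty$, controlling the exponents by $\lvert a_i\rvert\le c_1(n)\big(d\max_k\{1,h(x_k),h(y)\}\big)^{c_2(n)}$ from Proposition~\ref{prop:nonminexpbd} together with Propositions~\ref{prop:galbd} and~\ref{prop:htsingmod}. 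The remaining parts of the argument — the combinatorics of minimal multiplicatively dependent subsets and the pigeonholing over the finitely many curves of Theorem~\ref{thm:MSC} — are routine.
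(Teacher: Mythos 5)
Your setup (reduce to minimally multiplicatively dependent subtuples; apply Proposition~\ref{prop:minatyp} together with Conjecture~\ref{conj:ZP} to the minimal pieces; then try to assemble a single multiplicative special curve in $\C^{n+1}$ containing $(x_1,\ldots,x_n,y)$) matches the paper's strategy, and your handling of the case where a minimal subtuple is constant (invoking Theorem~\ref{thm:fixedy}) is correct. But the final step is a genuine gap, and the route you propose to fill it would not work.

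You correctly reduce to the situation where, for each $i$, the minimal subtuple $((x_\ell)_{\ell\in I_i},y)$ lies on a fixed multiplicative special curve $C_i$ with last coordinate $j(z)$. But then you try to conclude by noting that $\prod_{i=1}^n(f_i(\tau)-j(\tau))^{a_i}=1$ holds at infinitely many $\tau$ and ``promoting'' this to an identity on $\h$. This cannot work: the exponents $a_i$ depend on the tuple, hence on $\tau$, and the bound from Proposition~\ref{prop:nonminexpbd} grows with $\lvert\Delta\rvert$, so no pigeonholing fixes them. Equidistribution or an analysis of growth as $\im z\to\infty$ does not rescue this, precisely because you are not looking at a single analytic function. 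The observation you are missing is that you do not need the original $a_i$ at all: by the very definition of a multiplicative special curve, each $C_i$ comes equipped with \emph{fixed} nonzero integer exponents $b_{i,\ell}$ ($\ell\in I_i$) such that $\prod_{\ell\in I_i}(f_\ell(z)-j(z))^{b_{i,\ell}}=1$ identically on $\h$. Since each $k\in\{1,\ldots,n\}$ belongs to some $I_k$, Lemma~\ref{lem:vect} then combines these identities into a single relation $\prod_{\ell=1}^n(f_\ell(z)-j(z))^{b_\ell}=1$ with all $b_\ell\neq0$, which exhibits the multiplicative special curve in $\C^{n+1}$ containing $(x_1,\ldots,x_n,y)$ and finishes the argument. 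This is exactly how the paper closes the proof.

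A second, smaller gap: when $\ell\in I_i\cap I_{i'}$ for $i\neq i'$, you assert ``a final pigeonhole fixes $j$-maps $f_1,\ldots,f_n$'' without checking that $C_i$ and $C_{i'}$ assign the \emph{same} $j$-map (or the same constant) to the $\ell$-th coordinate. Two distinct non-constant $j$-maps can agree at a CM point, so the consistency is not automatic; the paper handles this via Proposition~\ref{prop:eqcoords} (and a parallel argument in the constant/non-constant mixed case), obtaining that an inconsistency forces $\lvert\Delta(y)\rvert$ to be bounded, which is absorbed by Theorem~\ref{thm:fixedy}. Your argument needs the same check before the pigeonhole can produce well-defined $f_1,\ldots,f_n$.
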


\begin{proof}
	Fix $n \in \Z_{>0}$. We will show that there exists a constant $C>0$, depending only on $n$, with the following property. Suppose that $(x_1, \ldots, x_n, y)$ is an $(n+1)$-tuple of pairwise distinct singular moduli $x_1, \ldots, x_n, y$ such that 
	\[ \prod_{i=1}^n (x_i - y)^{a_i} = 1\]
	for some $a_1, \ldots, a_n \in \Z \setminus \{0\}$. Then either $\lvert \Delta(y) \rvert \leq C$ or $(x_1, \ldots, x_n, y)$ lies on a multiplicative special curve in $\C^{n+1}$. By Theorem~\ref{thm:fixedy}, this suffices to prove Theorem~\ref{thm:main}. In what follows, we let $c_1, c_2, \ldots$ denote positive constants which depend only on $n$. 
	
	Suppose that $(x_1, \ldots, x_n, y)$ is an $(n+1)$-tuple of pairwise distinct singular moduli $x_1, \ldots, x_n, y$ such that 
	\[ \prod_{i=1}^n (x_i - y)^{a_i} = 1\]
	for some $a_1, \ldots, a_n \in \Z \setminus \{0\}$. For every $k \in \{1, \ldots, n\}$, there exists, by \cite[Lemma~5.9]{Fowler21}, a set $I_k \subset \{1, \ldots, n\}$ such that $k \in I_k$ and the set
	\[ \{x_i - y : i \in I_k\}\]
	is minimally multiplicatively dependent. Let $m_k = \# I_k$. Let $i_{k, 1}, \ldots, i_{k, m_k} \in \{1, \ldots, n\}$ be pairwise distinct and such that $I_k = \{i_{k, 1}, \ldots, i_{k, m_k}\}$. 
	
	Conjecture~\ref{conj:ZP} implies that, for every $k \in \{1, \ldots, n\}$, there are only finitely many maximal atypical components of $V_{m_k}$ in $X_{m_k+1, m_k}$. Therefore, there exists a constant $c_1$ with the property that if there exists $k$ such that
	\[ \{(x_{i_{k, 1}}, \ldots, x_{i_{k, m_k}}, y, x_{i_{k, 1}}- y, \ldots, x_{i_{k, m_k}} - y)\}\]
	is a maximal atypical component of $V_{m_k}$ in $X_{m_k+1, m_k}$, then $\lvert \Delta(y) \rvert \leq c_1$. We may therefore assume that, for every $k \in \{1, \ldots, n\}$, the point
	\[ \{(x_{i_{k, 1}}, \ldots, x_{i_{k, m_k}}, y, x_{i_{k, 1}}- y, \ldots, x_{i_{k, m_k}} - y)\}\]
	is not a maximal atypical component of $V_{m_k}$ in $X_{m_k+1, m_k}$. 
	
	Proposition~\ref{prop:minatyp} then implies that, for every $k \in \{1, \ldots, n\}$, there exists a multiplicative special curve $M_k \subset \C^{m_k+1}$ such that
	\[(x_{i_{k, 1}}, \ldots, x_{i_{k, m_k}}, y) \in M_k.\] 
	By Theorem~\ref{thm:MSCshape}, there exist $j$-maps $f_{k, 1}, \ldots, f_{k, m_k}$ such that
	\[ M_k = \{ (f_{k, 1}(z), \ldots, f_{k, m_k}(z), j(z)) : z \in \h\}.\]
	Theorem~\ref{thm:MSC} implies that there exists a constant $c_2$ such that:
	\begin{enumerate}
		\item if $f_{k, r}$ is a constant $j$-map whose value is a singular modulus of discriminant $\Delta$, then $\lvert \Delta \rvert \leq c_2$,
		\item if $f_{k, r}$ is a non-constant $j$-map, then there exist $N \in \Z_{>0}$ and $g \in C(N)$ such that $N \leq c_2$ and $f_{k, r}(z) = j(g z)$.
	\end{enumerate}
	If $f_{k, r}$ is a constant $j$-map, then $f_{k, r} = x_{i_{k, r}}$ and hence $\lvert \Delta(x_{i_{k, r}}) \rvert \leq c_2$. 
	
	Let the constant $c_3$ be such that $\lvert \Delta(x) \rvert \leq c_3$ for every singular modulus $x$ belonging to the set
	\begin{align*}
		\{ z \in \C :\, &\mbox{there exist } M \in \Z_{>0} \mbox{ and a singular modulus } w\\ 
		&\mbox{such that} \max\{M,  \lvert \Delta(w) \rvert \} \leq c_2 \mbox{ and } \Phi_M(w, z) = 0 \}.
	\end{align*}
	Such a constant exists since this set is clearly finite. For $N \in \Z_{>0}$, denote by $\mathbb{V}(\Phi_N)$ the vanishing locus of the polynomial $\Phi_N(X, Y)$ in $\C^2$.
	The sets $\mathbb{V}(\Phi_N)$ are pairwise distinct, irreducible plane curves. So if $M \neq N$, then the intersection  $ \mathbb{V}(\Phi_M) \cap \mathbb{V}(\Phi_N)$
	is a finite set. There thus exists a constant $c_4$ with the property that if $z, w$ are singular moduli such that
	\[ (z, w) \in \bigcup_{\substack{M, N \in \Z_{>0}\\ M \neq N \mbox{ and } M, N \leq c_2}} ( \mathbb{V}(\Phi_M) \cap \mathbb{V}(\Phi_N)),\]
	then $\max \{ \lvert \Delta(w) \rvert, \lvert \Delta(y) \rvert\} \leq c_4$.
	
	We now claim that, for every $r \in \{1, \ldots, n\}$, either
	\[ \{f_{k, s} : i_{k, s} = r\} = \{x_r\}\]
	or there exists $N \leq c_2$ such that
	\[\{f_{k, s}(z) : i_{k, s} = r\} \subset \{j(g z) : g \in C(N)\}.\]
	It suffices to show that if $r \in \{1, \ldots, n\}$ is such that $r = i_{k, l} = i_{k', l'}$, then either $f_{k, l} = f_{k', l'} = x_r$ or there exist $N \leq c_2$ and $g, h \in C(N)$ such that $f_{k, l}(z) = j(g z)$ and $f_{k', l'}(z) = j(h z)$.   Relabelling as necessary, we may assume that $r = i_{k, 1} = i_{k', 1}$.
	
	If $f_{k, 1}, f_{k', 1}$ are both constant, then $f_{k, 1} = f_{k', 1} = x_r$. Next, consider the case that the $j$-map $f_{k, 1}$ is constant and the $j$-map $f_{k', 1}$ is non-constant. So $f_{k, 1} = x_{r}$ and hence $\lvert \Delta(x_r) \rvert \leq c_2$. Also, there exist $N \in \Z_{>0}$ and $g \in C(N)$ such that $N \leq c_2$ and $f_{k', 1}(z) = j(g z)$. Since $(x_{i_{k', 1}}, \ldots, x_{i_{k', m_{k'}}}, y) \in M_{k'}$, there exists some $z_0 \in \h$ such that $(j(g z_0), j(z_0)) = (x_{r}, y)$. In particular, $\Phi_N(x_{r}, y) = 0$. Therefore, $\lvert \Delta(y) \rvert \leq c_3$.
	
	Finally, suppose that the $j$-maps $f_{k, 1}, f_{k', 1}$ are both non-constant. Then there exist $M, N \in \Z_{>0}$ and $g \in C(M), h \in C(N)$ such that $f_{k, 1}(z) = j(g z)$ and $f_{k', 1}(z) = j(h z)$. Since
	\[(x_{i_{k, 1}}, \ldots, x_{i_{k, m_{k'}}}, y) \in M_{k} \mbox{ and } (x_{i_{k', 1}}, \ldots, x_{i_{k', m_{k'}}}, y) \in M_{k'},\] 
	we have that
	\[ (x_r, y) \in \{ (j(g z), j(z)) : z \in \h\} \cap \{ (j(h w), j(w)) : w \in \h\}. \]
	Hence, $(x_r, y) \in  \mathbb{V}(\Phi_M) \cap \mathbb{V}(\Phi_N)$, and thus $\lvert \Delta(y) \rvert \leq c_4$ if $M \neq N$. The claim therefore holds if $\lvert \Delta(y) \rvert \geq \max \{c_3, c_4\}$.
	
	Suppose then that $\lvert \Delta(y) \rvert \geq \max \{c_3, c_4\}$. The proved claim and the properties of multiplicative special curves given in Theorem~\ref{thm:MSCshape} together imply that there exists a partition $P_0, \ldots, P_l$ of $\{1, \ldots, n\}$, where $l \geq 1$, with the following properties:
	\begin{enumerate}
		\item The $j$-map $f_{k, r}$ is constant if and only if $i_{k, r}\in P_0$, in which case $f_{k, r} = x_{i_{k, r}}$.
		\item For $u \in \{1, \ldots, l\}$ and $k \in \{1, \ldots, n\}$, if $P_u \cap I_k \neq \emptyset$, then $P_u \subset I_k$.
		\item For $u \in \{1, \ldots, l\}$, there exists $N_u \in \Z_{>0}$ such that writing $C(N_u) = \{g_1, \ldots, g_{s}\}$ with $g_1, \ldots, g_s$ pairwise distinct, if $k \in \{1, \ldots, n\}$ is such that $P_u \subset I_k$, then 
		\begin{align*} &\{f_{k, v}(z) :  v \in \{1, \ldots, m_k\} \mbox{ such that } i_{k, v} \in P_u \}\\ 
			&= \{j(g_1 z), \ldots, j(g_s z)\}.
		\end{align*}
	\end{enumerate}
	
	We will now define $j$-maps $f_1, \ldots, f_n$. For $m \in P_0$, let $f_m = x_m$. For $m \in I_1 \setminus P_0$, let $f_m(z) = f_{1, v}(z)$, where $v$ is the unique integer such that $i_{1, v} = m$. Now let $z_0 \in \h$ be such that $j(z_0) = y$ and $f_m(z_0) = x_m$ for every $m \in I_1 \cup P_0$. Such a $z_0$ exists by the definition of $M_1$ and $P_0$.
	
	Let $r \geq 1$ be such that $P_r \not \subset I_1$. Write $C(N_r) = \{g_1, \ldots, g_s\}$ with $g_1, \ldots, g_s$ pairwise distinct. Note that there exists $w_0 \in \h$ such that
	\[ j(w_0) = y \mbox{ and } \{j(g_1 w_0), \ldots, j(g_s w_0)\} = \{x_i : i \in P_r\}. \]
	This follows from property (3) of the constructed partition by considering the multiplicative special curve $M_k$ for some $k$ such that $P_r \subset I_k$. In particular, $y= j(z_0) = j(w_0)$ and so there exists $\gamma \in \SL$ such that $\gamma z_0 = w_0$. Note that the functions $j(g_1 \gamma z), \ldots, j(g_s \gamma z)$ are just a permutation of the functions $j(g_1 z), \ldots, j(g_s z)$. Hence, we also have that
	\[ \{j(g_1 z_0), \ldots, j(g_s z_0)\} = \{x_i : i \in P_r\}.\]
	Since the $x_i$ are pairwise distinct, we may then, for $m \in P_r$, define $f_m(z) = j(g_v z)$, where $v \in \{1, \ldots, s\}$ is the unique integer $v$ such that $j(g_v z_0) = x_m$. 
	
	Since the $P_i$ partition $\{1, \ldots, n\}$, we may define in this way $j$-maps $f_1, \ldots, f_n$. By construction, we have that
	\[ (x_1, \ldots, x_n, y) = (f_1(z_0), \ldots, f_n(z_0), j(z_0)).\]
	In particular, the functions $f_1, \ldots, f_n, j$ are pairwise distinct. For $k \in \{1, \ldots, n\}$, observe that
	\[ \{f_{i}(z) : i \in I_k\} = \{ f_{k, r}(z) : r \in \{1, \ldots, m_k\} \} .\]
	Since $M_k$ is a multiplicative special curve and this property is preserved under permutation of the first $m_k$ coordinates of $M_k$, there therefore exist $a_{k, i} \in \Z \setminus \{0\}$ such that
	\[\prod_{i \in I_k} (f_i(z) - j(z))^{a_{k, i}} = 1\] 
	for every $k \in \{1, \ldots, n\}$ and $z \in \h$. Since $k \in I_k$ for every $k$, Lemma~\ref{lem:vect} implies that there exist $b_1, \ldots, b_n \in \Z \setminus \{0\}$ such that
	\[\prod_{i =1}^n (f_i(z) - j(z))^{b_i} = 1\]
	for all $z \in \h$. Thus, the set
	\[ \{(f_1(z), \ldots, f_n(z), j(z)) : z \in \h\}\]
	is a multiplicative special curve in $\C^{n+1}$ which contains $(x_1, \ldots, x_n, y)$.
	
	We have thus shown that if $(x_1, \ldots, x_n, y)$ is an $(n+1)$-tuple of pairwise distinct singular moduli $x_1, \ldots, x_n, y$ such that 
	\[ \prod_{i=1}^n (x_i - y)^{a_i} = 1\]
	for some $a_1, \ldots, a_n \in \Z \setminus \{0\}$, then either $\lvert \Delta(y) \rvert \leq \max \{c_1, c_3, c_4\}$ or $(x_1, \ldots, x_n, y)$ lies on a multiplicative special curve in $\C^{n+1}$. Theorem~\ref{thm:main} thus follows from Theorem~\ref{thm:fixedy}.
\end{proof}

\bibliographystyle{amsplain}

\end{document}